\newif\ifdraft
\definecolor{labelkey}{gray}{0.5}
\newlength{\myarrowsize} 
\newenvironment{diagram*}[2]{%
\[%
\begin{tikzpicture}[>=cmto,baseline=(current bounding box.center),%
	to/.style={->,font=\scriptsize,cap=round},%
	into/.style={cmhook->,font=\scriptsize,cap=round},%
	onto/.style={-cmonto,font=\scriptsize,cap=round},%
	math/.style={matrix of math nodes, row sep=#2, column sep=#1,%
		text height=1.5ex, text depth=0.25ex}]%
}{%
\end{tikzpicture}%
\]%
\ignorespacesafterend%
}
\newcommand{\Dmod}{\mathscr{D}}
\newcommand{\cohH}{\mathcal{H}}
\newcommand{\ZZ}{\mathbb{Z}}
\newcommand{\QQ}{\mathbb{Q}}
\newcommand{\CC}{\mathbb{C}}
\DeclareMathOperator{\HRH}{HRH}
\newcommand{\shf}[1]{\mathscr{#1}}
\def\overbar#1#2#3{{%
	\setbox0=\hbox{\displaystyle{#1}}%
	\dimen0=\wd0
	\advance\dimen0 by -#2 
	\vbox {\nointerlineskip \moveright #3 \vbox{\hrule height 0.3pt width \dimen0}%
		\nointerlineskip \vskip 1.5pt \box0}%
}}
\newcommand{\shO}{\shf{O}}
\newcommand{\cK}{\mathcal{K}}
\newcommand{\cL}{\mathcal{L}}
\newcommand{\cM}{\mathcal{M}}
\newcommand{\cN}{\mathcal{N}}
\newcommand{\de}{\partial}
\let\@@seccntformat\@seccntformat
\renewcommand*{\@seccntformat}[1]{%
  \expandafter\ifx\csname @seccntformat@#1\endcsname\relax
    \expandafter\@@seccntformat
  \else
    \expandafter
      \csname @seccntformat@#1\expandafter\endcsname
  \fi
    {#1}%
}
\newcommand*{\@seccntformat@subsection}[1]{%
  \textbf{\csname the#1\endcsname.}
}
\let\@paragraph\paragraph
\renewcommand*{\paragraph}[1]{%
	\vspace{0.3\baselineskip}%
	\@paragraph{\textit{#1}}%
}
\newtheorem{theorem}[equation]{Theorem}
\newtheorem*{theorem*}{Theorem}
\newtheorem{lemma}[equation]{Lemma}
\newtheorem*{lemma*}{Lemma}
\newtheorem{corollary}[equation]{Corollary}
\newtheorem{proposition}[equation]{Proposition}
\newtheorem{exprop}[equation]{Example/Proposition}
\newtheorem*{proposition*}{Proposition}
\theoremstyle{definition}
\newtheorem{definition}[equation]{Definition}
\newtheorem*{definition*}{Definition}
\newtheorem{remark}[equation]{Remark}
\newtheorem{example}[equation]{Example}
\newtheorem*{example*}{Example}
\newtheorem*{problem*}{Problem}
\theoremstyle{plain}
\newcommand{\theoremref}[1]{\hyperref[#1]{Theorem~\ref*{#1}}}
\newcommand{\lemmaref}[1]{\hyperref[#1]{Lemma~\ref*{#1}}}
\newcommand{\definitionref}[1]{\hyperref[#1]{Definition~\ref*{#1}}}
\newcommand{\propositionref}[1]{\hyperref[#1]{Proposition~\ref*{#1}}}
\newcommand{\conjectureref}[1]{\hyperref[#1]{Conjecture~\ref*{#1}}}
\newcommand{\corollaryref}[1]{\hyperref[#1]{Corollary~\ref*{#1}}}
\newcommand{\exampleref}[1]{\hyperref[#1]{Example~\ref*{#1}}}
\newcommand{\setupref}[1]{\hyperref[#1]{Set-up~\ref*{#1}}}
\newcommand{\remarkref}[1]{\hyperref[#1]{Remark~\ref*{#1}}}
\newcommand{\claimref}[1]{\hyperref[#1]{Claim~\ref*{#1}}}
\newcommand{\figureref}[1]{\hyperref[#1]{Figure~\ref*{#1}}}
\let\old@caption\caption
\renewcommand*{\caption}[1]{%
	\setcounter{figure}{\value{equation}}%
	\stepcounter{equation}%
	\old@caption{#1}\relax%
}
\newcounter{intro}
\newtheorem{intro-conjecture}[intro]{Conjecture}
\newtheorem{intro-corollary}[intro]{Corollary}
\newtheorem{intro-theorem}[intro]{Theorem}
\def\cK{\mathcal{K}}
\def\cA{\mathcal{A}}
\newcommand{\parref}[1]{\hyperref[#1]{\S\ref*{#1}}}
\newcommand*\if@single[3]{%
  \setbox0\hbox{${\mathaccent"0362{#1}}^H$}%
  \setbox2\hbox{${\mathaccent"0362{\kern0pt#1}}^H$}%
  \ifdim\ht0=\ht2 #3\else #2\fi
  }
\newcommand*\rel@kern[1]{\kern#1\dimexpr\macc@kerna}
\newcommand*\widebar[1]{\@ifnextchar^{{\wide@bar{#1}{0}}}{\wide@bar{#1}{1}}}
\newcommand*\wide@bar[2]{\if@single{#1}{\wide@bar@{#1}{#2}{1}}{\wide@bar@{#1}{#2}{2}}}
\newcommand*\wide@bar@[3]{%
  \begingroup
  \def\mathaccent##1##2{%
    \if#32 \let\macc@nucleus\first@char \fi
    \setbox\z@\hbox{$\macc@style{\macc@nucleus}_{}$}%
    \setbox\tw@\hbox{$\macc@style{\macc@nucleus}{}_{}$}%
    \dimen@\wd\tw@
    \advance\dimen@-\wd\z@
    \divide\dimen@ 3
    \@tempdima\wd\tw@
    \advance\@tempdima-\scriptspace
    \divide\@tempdima 10
    \advance\dimen@-\@tempdima
    \ifdim\dimen@>\z@ \dimen@0pt\fi
    \rel@kern{0.6}\kern-\dimen@
    \if#31
      \overline{\rel@kern{-0.6}\kern\dimen@\macc@nucleus\rel@kern{0.4}\kern\dimen@}%
      \advance\dimen@0.4\dimexpr\macc@kerna
      \let\final@kern#2%
      \ifdim\dimen@<\z@ \let\final@kern1\fi
      \if\final@kern1 \kern-\dimen@\fi
    \else
      \overline{\rel@kern{-0.6}\kern\dimen@#1}%
    \fi
  }%
  \macc@depth\@ne
  \let\math@bgroup\@empty \let\math@egroup\macc@set@skewchar
  \mathsurround\z@ \frozen@everymath{\mathgroup\macc@group\relax}%
  \macc@set@skewchar\relax
  \let\mathaccentV\macc@nested@a
  \if#31
    \macc@nested@a\relax111{#1}%
  \else
    \def\gobble@till@marker##1\endmarker{}%
    \futurelet\first@char\gobble@till@marker#1\endmarker
    \ifcat\noexpand\first@char A\else
      \def\first@char{}%
    \fi
    \macc@nested@a\relax111{\first@char}%
  \fi
  \endgroup
}
\def\l@section{\@tocline{1}{0pt}{0pc}{0pc}{\bfseries}}
 \def\l@subsection{\@tocline{2}{0pt}{4pc}{6pc}{}}
\def\l@subsubsection{\@tocline{3}{0pt}{8pc}{8pc}{}}
\begin{document}

\author[B.~Dirks]{Bradley Dirks}

\address{Department of Mathematics, Stony Brook University, Stony Brook, NY 11794-3651, USA}

\email{bradley.dirks@stonybrook.edu}

\author[S.~Olano]{Sebasti\'{a}n Olano}

\address{Department of Mathematics, University of Toronto, 40 St. George St., Toronto, Ontario Canada, M5S 2E4}

\email{seolano@math.toronto.edu}

\author[D.~Raychaudhury]{Debaditya Raychaudhury}

\address{Department of Mathematics and Statistics, University of New Mexico, Albuquerque, NM 87131, USA}

\email{rcdeba@gmail.com}

\subjclass[2020]{14B05, 14F10, 32S35.}
\thanks{This material is based upon work supported by the National Science Foundation under Grant No. DMS-1926686 and MSPRF DMS-2303070. The research of DR is partially supported by an AMS-Simons Travel Grant.}

\title[A Hodge Theoretic Generalization of $\mathbb{Q}$-Homology Manifolds: General Case]{A Hodge Theoretic Generalization of $\mathbb{Q}$-Homology Manifolds I: General Case}


\begin{abstract} We study a natural Hodge-theoretic generalization of rational (or $\mathbb{Q}$-)homology manifolds through an invariant $\HRH(Z)$ attached to a complex algebraic variety $Z$. The defining property of this notion encodes the difference between higher Du Bois and higher rational singularities for local complete intersections, which are two classes of singularities that have recently gained much attention.
We show that $\HRH(Z)$ can be characterized when the variety $Z$ is embedded into a smooth variety using the local cohomology mixed Hodge modules. Near a point, this is also characterized by the local cohomology of $Z$ at the point, and hence, by the cohomology of the link. We give an application to partial Poincar\'{e} duality. We also introduce the generic local cohomological defect ${\rm lcdef}_{\textrm{gen}}(Z)$ and relate it to $\HRH(Z)$. Various examples are discussed at the end.

\end{abstract}


\maketitle




\section{Introduction}
The cohomology of a compact, oriented real manifold $X$ of dimension $2n$ satisfies the following remarkable symmetry, called Poincar\'{e} duality:
\[ H^{n-k}(X,\QQ) \cong H^{n+k}(X,\QQ)^{\vee},\]
where the right-hand side is the dual vector space.

In the non-compact setting, such a duality still holds, but it compares singular cohomology with compactly supported cohomology:
\[ H^{n-k}(X,\QQ) \cong H^{n+k}_{\rm c}(X,\QQ)^{\vee}.\]

The above applies to smooth complex algebraic varieties of dimension $n$. For singular varieties (in this paper, meaning reduced, finite type schemes over $\CC$), the duality need not hold for singular cohomology of the associated analytic space. Goresky-MacPherson's theory of intersection cohomology provides a replacement of singular cohomology which still admits a Poincar\'{e} duality isomorphism. This theory associates to any purely $d$-dimensional complex variety $Z$ a pair of graded vector spaces ${\rm IH}^*(Z,\QQ)$ and ${\rm IH}^*_{\rm c}(Z,\QQ)$ (which, in the smooth case, agree with the singular cohomology and compactly supported cohomology, respectively), such that there are natural isomorphisms
\[ {\rm IH}^{d-k}(Z,\QQ) \cong {\rm IH}_{c}^{d+k}(Z,\QQ)^{\vee}.\]

As in \cite{BBDG}, these vector spaces can be realized as the hypercohomology of the intersection complex perverse sheaf, which is self-dual as a perverse sheaf. The purpose of this article is to introduce an invariant of complex algebraic varieties measuring a partial Poincar\'e duality property.

One of the many achievements of Saito's theory of mixed Hodge modules \cites{SaitoMHP,SaitoMHM} is that it endows these intersection cohomology spaces with natural mixed Hodge structures. We review the aspects of this theory used throughout the paper in Section \ref{sect-Preliminaries}.

Let $Z$ be a purely $d$-dimensional complex algebraic variety. Recall that if $Z$ is smooth, there is a perfect pairing \[ \Omega^p_Z \times \Omega^{d-p}_Z \to \omega_Z, \] and therefore isomorphisms  \[\Omega^p_Z \stackrel{\cong}{\to} (\Omega^{d-p}_Z)^*\otimes_{\shO_Z} \omega_Z = \mathbb D_Z(\Omega^{d-p}_Z)[-d],\]
where $\mathbb D_Z = R \cohH om_{\shO_Z}(-,\omega_Z^\bullet)$ is the Grothendieck duality functor. In general, this picture is partially generalized by morphisms in $D^b_{\rm coh}(\shO_Z)$:   
\[ \phi^p \colon \underline{\Omega}_Z^p \to \mathbb D_Z(\underline{\Omega}_Z^{d-p})[-d],\]
where $\underline{\Omega}_Z^p$ is the $p$-th Du Bois complex of $Z$. We review the Du Bois complexes in Section \ref{sect-Preliminaries} below. For now, these should be thought of as replacements of the sheaf of K\"{a}hler differentials which are better behaved from a Hodge-theoretic point of view. 

\begin{definition}\label{deff} Let $Z$ be a pure $d$-dimensional variety. We say $Z$ is a \emph{rational homology manifold to Hodge degree} $k$, or {\it a $k$-Hodge rational homology variety}  if $\phi^p$ is a quasi-isomorphism for $0\leq p\leq k$. 

Further, define the {\it HRH level of $Z$} to be
\[{\rm HRH}(Z) := \sup\{k \in \ZZ_{\geq -1} \mid \phi^p \text{ is a quasi-isomorphism for } 0\leq p\leq k\}\]
where we follow the convention that $\HRH(Z) = -1$
if $\phi^0$ is not a quasi-isomorphism.  
\end{definition}

\begin{remark} The interest in this invariant arises naturally due to its connection to \emph{higher singularities} of the variety $Z$. Indeed, it is a consequence of the higher injectivity theorem (for isolated singularities due to Popa-Shen-Vo \cite{PSV} and for the arbitrary case due to Kov\'{a}cs \cite{KovacsInjectivity} and Chen-Dirks-Olano \cite{CDOInjectivity}) that we have the equivalence (for various notions of higher Du Bois and rational singularities):
\begin{equation} \label{eq-HigherSings} Z \text{ has } m\text{-rational singularities} \iff Z \text{ has } m\text{-Du Bois singularities and } {\rm HRH}(Z) \geq m.\end{equation}

The implication from left to right as in \eqref{eq-HigherSings} has a long history, dating to the conjecture of Steenbrink and its resolution, a theorem of Kov\'{a}cs \cite{KovacsDBRat}*{Thm. S} and independently Saito \cite{SaitoOnHodgeFilt}: rational singularities are Du Bois.
\end{remark}

\noindent{\bf Applications of HRH level.} As mentioned, we have partial Poincar\'e duality for $k$-Hodge rational homology varieties (see \theoremref{thm-PD} for a more elaborate formulation). Throughout the article, a variety $Z$ is \emph{embeddable} if there exists a closed embedding $i\colon Z \to X$ with $X$ a smooth variety.

\begin{intro-theorem}[Partial Poincar\'e duality]\label{thm-pd} Let $Z$ be an embeddable complex algebraic variety. If $\HRH(Z) \geq k$, then for all $i\in \ZZ$, we have isomorphisms for all $p \leq k$:
\[ {\rm Gr}_F^{d-p} H^{d-i}(Z) \cong ({\rm Gr}_F^{p} H^{d+i}_c(Z))^{\vee}.\]
{If $Z$ is proper with isolated non-rational homology manifold locus, then the converse holds.}
\end{intro-theorem}

The latter statement also gives our first characterization of ${\rm HRH}(Z)$. Below we give several others in terms of well-known characterizations of the rational homology manifold condition.

\begin{remark} This notion is different from another weakening of Poincar\'{e} duality, due to Kato \cite{Kato}. Indeed, the notion we study is related to which Hodge filtered pieces are Poincar\'{e} dual to each other (in all cohomological degrees), whereas Kato's notion asks for which cohomological degrees Poincar\'{e} duality holds (in all Hodge levels).
\end{remark}

The notion of $k$-Hodge rational homology varieties is also studied in the recent article \cite{PPLefschetz} through an equivalent defining property that is phrased as the condition $D_k$ which requires the natural maps in $D^b_{\rm coh}(\shO_Z)$
\[
\underline{\Omega}_Z^p\to {\rm I} \underline{\Omega}_Z^p
\]
to be isomorphisms for $p\leq k$, where ${\rm I} \underline{\Omega}_Z^p$ is the $p$-th intersection Du Bois complex (see \remarkref{rmk-compareStark} for the equivalence of the $D_k$-condition with HRH$(Z)\geq k$). This equivalence immediately gives an interesting concrete application of the invariant ${\rm HRH}(Z)$ via a dual Kodaira-Akizuki-Nakano type vanishing: 


\begin{intro-corollary}[KAN type vanishing]\label{cor-DANVanishing} Let $Z$ be a purely $d$-dimensional projective variety with an ample line bundle $L$. Then we have vanishing
\[ \mathbb H^q(Z, \underline{\Omega}_Z^p \otimes_{\shO_Z} L^{-1}) = 0 \text{ for all } p+q < d, \, p \leq {\rm HRH}(Z).\]
\end{intro-corollary}

Another way to see this result is by applying \cite{CDOCCI}*{Thm. C}.

In \cite{PPLefschetz}, important consequences of condition $D_k$, such as symmetry of Hodge-Du Bois numbers and Lefschetz properties, are discussed in detail. 
Moreover, in \cite{PPLefschetz}*{Cor. 7.5}, it was proved that the codimension of the locus $Z_{\textrm{nRS}}$ where $Z$ is not a rational homology manifold is bounded below by $2\HRH(Z)+3$ (here nRS stands for non-rationally smooth). 

In fact, they established an inequality involving ${\rm HRH}(Z)$ and the \emph{local cohomological defect} of $Z$. This latter invariant is defined through local cohomology modules $\cohH^{j}_Z(\shO_X)$ when $Z\subseteq X$ is an embedding with $X$ smooth. These modules are potentially non-zero only for $j \geq {\rm codim}_X(Z)$, and we have
\[{\rm lcdef}(Z) := \max\{j\mid \cohH^{c+j}_Z(\shO_X)\neq 0, \text{ where } Z \subseteq X \text{ is a codimension }c\text{ embedding}\}\] 
It turns out that the above description does not depend on choice of the embedding. 

We introduce a ``generic variant'' of the invariant ${\rm lcdef}(Z)$ in \S \ref{sec-GenLcdef} that we call ${\rm lcdef}_{\textrm{gen}}(Z)$ (see \definitionref{deflcdefgen}). It is a non-negative integer satisfying the inequality 
\begin{equation}\label{obvneq}
    {\rm lcdef}_{\textrm{gen}}(Z)\leq {\rm lcdef}(Z)
\end{equation}
(equality holds in the case of isolated singularities, but strict inequality is also possible in the above, see \S \ref{sec-determinantal} for explicit examples).
We obtain the following improvement of \cite{PPLefschetz}*{Cor. 7.5} via ${\rm lcdef}_{\textrm{gen}}(Z)$:

\begin{intro-theorem}\label{thm-ppbound}
Let $Z$ be a purely $d$-dimensional variety with $\HRH(Z) \geq 0$. Then we have the inequality
\[ {\rm lcdef}_{\rm gen}(Z) + 2\HRH(Z) +3 \leq {\rm codim}_Z(Z_{\rm nRS}).\]
\end{intro-theorem}

A more elaborate version of the above is proven in \propositionref{prop-ppbound}. 
There are instances when equality holds, but strict inequality also occurs. These examples are in fact local complete intersections, which is the topic of the sequel \cites{DOR2}, so we refrain from discussing them here.

Various other geometric and topologically important properties of ${\rm HRH}(Z)$ are highlighted throughout this article. Because of this, it is important to develop techniques to measure this invariant, which is the main task that we undertake in this paper. 

\medskip

\noindent{\bf Characterizations of HRH level.} There are various well-known equivalent descriptions of the rational homology manifold property of a complex algebraic variety $Z$:
\begin{enumerate} \item If $Z\subseteq X$ is a closed subvariety of a smooth variety $X$ with $\dim X - \dim Z = c$, and if $\cohH^j_Z(\shO_X)$ are the \emph{local cohomology modules} of $\shO_X$ along $Z$, then 
\[ \cL(X,Z) = \cohH^c_Z(\shO_X) \text{ and } \cohH^q_Z(\shO_X) = 0\text{ for all } q > c,\]
where $\cL(X,Z)$ is the intersection homology $\Dmod$-module \cite{BrylinskiKashiwara}.

\item For all $x\in Z$, we have
\[ H^i_{\{x\}}(Z) = \begin{cases} 0 & i < 2\dim Z \\ \QQ & i = 2\dim Z \end{cases}.\]

\item If $\mathcal S = \{S_\alpha\}_{\alpha \in I}$ is a Whitney stratification of $Z$ and $L_\alpha$ is the link of $Z$ at $S_\alpha$, then
\[ H^i(L_\alpha) = \begin{cases} 0 & i < 2 {\rm codim}_Z(S_\alpha) \\ \QQ & i = 2 {\rm codim}_Z(S_\alpha)\end{cases}. \]
\end{enumerate}

The following theorem shows that the invariant ${\rm HRH}(Z)$ is a Hodge-theoretic weakening of \emph{any} of these characterizations of the rational homology manifold condition. We remark that the local cohomology modules admit a mixed Hodge module structure (see \cite{MustataPopaDuBois} for much more about this structure). In this setting, the intersection homology $\Dmod$-module underlies the lowest weighted piece of $\cohH^c_Z(\shO_X)$. In this paper, we index the Hodge filtration following conventions for \emph{right} $\Dmod$-modules.

The local cohomology vector space $H^i_{\{x\}}(Z)$ admits a mixed Hodge structure, and so does $H^i(L_\alpha)$, though $L_\alpha$ is in general not algebraic. We thank Lauren\c{t}iu Maxim for pointing out the third characterization in the following theorem.

\begin{intro-theorem} \label{thm-characterize} Let $Z$ be a purely $d$-dimensional variety. Then for any $k\in \ZZ_{\geq -1}$, the condition ${\rm HRH}(Z) \geq k$ is equivalent to any of the following equivalent conditions:
\begin{enumerate} \item \label{thmmain} If $Z\subseteq X$ is a closed subvariety of a smooth variety $X$ with $\dim X = n$ and $n - d = c$ then 
\[ F_{k-n} W_{n+c} \cohH_Z^c(\shO_X) = F_{k-n} \cohH^c_Z(\shO_X)\,\,\textrm{ and }\,\, F_{k-n} \cohH^q_Z(\shO_X) = 0 \text{ for all } q > c.\]

\item \label{propthm-LocCohAtPoint} For all $x\in Z$, we have
\[ F^{d-k} H^i_{\{x\}}(Z) = \begin{cases} 0 & i < 2d \\ \CC & i = 2d \end{cases}.\]

\item \label{thm-link} If $\mathcal S = \{S_\alpha\}_{\alpha \in I}$ is a Whitney stratification of $Z$ and $L_\alpha$ is the link of $Z$ at $S_\alpha$, then
\[ F^{{\rm codim}_Z(S_\alpha)-k} H^i(L_\alpha) = \begin{cases} 0 & i < 2 {\rm codim}_Z(S_\alpha) \\ \CC & i = 2 {\rm codim}_Z(S_\alpha)\end{cases}. \]
\end{enumerate}
\end{intro-theorem}

\begin{remark} As is convention, for a mixed Hodge structure $(V,F,W)$, we use Hodge filtrations that are indexed decreasingly. The corresponding increasing filtration is defined by $F_\bullet V = F^{-\bullet}V$.
\end{remark}

Combining \theoremref{thm-characterize}\eqref{thmmain} with a result of Musta\c{t}\u{a}-Popa \cite{MustataPopaDuBois}*{Thm. C}, we get the following. In the statement, the ``Ext'' filtration $E_\bullet \cohH^q_Z(\shO_X)$ is defined using the Ext description of local cohomology:
\[ \cohH^q_Z(\shO_X)= \varinjlim_{p}\mathcal E xt^q(\shO_X/\mathcal{J}_Z^{p+1},\shO_X),\]
where $\mathcal{J}_Z$ is the ideal sheaf of $Z$ in $X$, and the filtration is defined by
\[ E_\bullet \cohH^q_Z(\shO_X) = {\rm Im}\left[\mathcal E xt^q(\shO_X/\mathcal{J}_Z^{\bullet+1},\shO_X)\to \cohH^q_Z(\shO_X)\right].\]

\begin{intro-corollary} \label{cor-rational} If $Z \subseteq X$ is a closed embedding of a pure $c$-codimensional Cohen-Macaulay variety $Z$ inside a smooth variety $X$ of dimension $n$, then
\[ Z\text{ has rational singularities if and only if } F_{-n}W_{n+c}\cohH^c_Z(\shO_X) = E_0 \cohH^c_Z(\shO_X).\]
\end{intro-corollary}

An application of the characterization in terms of local cohomology \theoremref{thm-characterize}\eqref{thmmain} and \theoremref{thm-ppbound} is the computation of ${\rm HRH}(Z)$ in the case of determinantal varieties.

\begin{intro-corollary}\label{cor-det} Let $p \in \ZZ_{\geq 1}$ and let $Z_p$ denote a determinantal variety in the following four settings. 
\begin{enumerate} \item (Generic) $\HRH(Z_p) = 0$.

\item (Odd skew) $\HRH(Z_p) \in \{0,1\}$.

\item (Even skew) $\HRH(Z_p) \in \{0,1\}$.

\item (Symmetric) $\HRH(Z_p) \in \{0,1\}$ (when $p\geq 2$).
\end{enumerate}
    
\end{intro-corollary}

For more details on the definition of $Z_p$ see \S \ref{sec-determinantal}. Other examples are discussed in depth in Section \ref{sect-Examples} and in the companion paper \cite{DOR2}.

In the isolated singularities case, we can give another characterization of the invariant ${\rm HRH}(Z)$ via the \emph{link invariants}, following the notation of \cite{FLIsolated}. The statement of the corollary concerns the local variant of ${\rm lcdef}(Z)$ and ${\rm HRH}(Z)$, both defined by taking small enough Zariski open neighborhoods of the point $x\in Z$ and denoted with a subscript $x$.

 \begin{intro-corollary}\label{corlinkiso} Let $Z$ be a purely $d$-dimensional variety and let $x\in Z$ be an isolated singular point. Let $a = {\rm lcdef}_x(Z)$.
Then for any $k\geq 0$, we have $\HRH_x(Z) \geq k$ if and only if \begin{equation} \label{eq-LinkVanishing}  \ell^{d-i,q-d+i} = 0\text{ for all }  i\leq k, \, q\in [d,d+a]\end{equation}

Thus, $Z$ has $k$-rational singularities near $x$ if and only if it has $k$-Du Bois singularities near $x$ and the vanishing \eqref{eq-LinkVanishing} holds.
\end{intro-corollary}

We also recall the result of Brion \cite{Brion}*{Prop. A1} which states that if $Z$ is a rational homology manifold near $x$, then it is irreducible near $x$. In fact, using a bound on the local cohomological defect due to \cite{PPLefschetz}, we conclude the following (compare with the fact that if $Z$ has rational singularities near $x$, then it is normal, hence irreducible, near $x$):

\begin{intro-theorem}\label{thm-TopLocCoh} Let $Z$ be a purely $d$-dimensional variety and let $x\in Z$. If $\HRH_x(Z) \geq 0$, then
\[ \dim H^{2d-\ell-1}(L_x) = \dim H^{2d-\ell}_{\{x\}}(Z) = \begin{cases} 1 & \ell = 0 \\ 0 & 0 < \ell \leq 2{\rm HRH}_x(Z) +1 \end{cases}.\]
In particular, if ${\rm HRH}_x(Z) \geq 0$, then $Z$ is irreducible at $x$.
\end{intro-theorem}




\noindent\textbf{Outline.} Section \ref{sect-Preliminaries} contains a review of the theory of mixed Hodge modules, and the definitions of higher Du Bois and higher rational singularities.
Section \ref{sect-Definition} defines and studies the invariant $\HRH(Z)$. The proofs of \corollaryref{cor-DANVanishing}, \theoremref{thm-characterize}\eqref{thmmain} and \corollaryref{cor-rational} 
are given in \S \ref{emb-sec}. In the following \S \ref{link-sec}, \theoremref{thm-characterize}\eqref{propthm-LocCohAtPoint} ($=$ \theoremref{prop-LocCohAtPoint}), \theoremref{thm-characterize}\eqref{thm-link}, \corollaryref{corlinkiso} and \theoremref{thm-TopLocCoh} are proven. Moreover, an application to partial Poincar\'{e} duality on singular cohomology is given in \S \ref{sec-pd} where we prove \theoremref{thm-pd} ($=$ \theoremref{thm-PD}).
The following \S \ref{sec-GenLcdef} contains a proof of \theoremref{thm-ppbound} ($=$ \propositionref{prop-ppbound}), the main observation being that ${\rm lcdef}_x(Z)$ is invariant under taking normal slices. The end of that section highlights some interesting behavior with references to the examples in \S \ref{sec-determinantal}.
Section \ref{sect-Examples} provides examples of various features. Here we compute the Hodge rational homology levels of affine cones, determinantal varieties and other natural classes of examples. \corollaryref{cor-det} ($=$ \propositionref{prop-ComputeHRHDet}) is proven in this section.

\medskip

\noindent {\bf Acknowledgments.} We would like to thank Bhargav Bhatt, Qianyu Chen, Radu Laza, Lauren\c{t}iu Maxim, Mircea Musta\c{t}\u{a}, Sung Gi Park, Mike Perlman, Mihnea Popa, Sridhar Venkatesh and Anh Duc Vo for many conversations on the topics in this paper.

\section{Preliminaries} \label{sect-Preliminaries}
In this section, we give a brief overview of the background material needed in the rest of the paper. We will use without review the theory of perverse sheaves and $\Dmod$-modules. For more information, see \cite{BBDG} and \cite{HTT}, respectively.

\subsection{Mixed Hodge modules} The main objects used in this paper are mixed Hodge modules, defined by Saito \cites{SaitoMHP,SaitoMHM}. We make the convention in this paper that all $\Dmod$-modules are \emph{left} modules; however, we will index the Hodge filtration following the conventions for right $\Dmod$-modules. We will remind the reader about these conventions below, when necessary.

On a smooth complex algebraic variety $X$ of dimension $n$, a \emph{mixed Hodge module} consists of the data 
\[M = (\cM,F,W,(\cK,W),\alpha)\]
where $\cM$ is a regular holonomic $\Dmod_X$-module, $F_\bullet \cM$ is a good filtration on it, $W_\bullet \cM$ is a finite filtration by $\Dmod_X$-modules, $(\cK,W)$ is an algebraically constructible $\QQ$-perverse sheaf on $X^{\rm an}$ with a finite filtration $W_\bullet \cK$, and $\alpha$ is a filtered isomorphism
\[ \alpha \colon \CC \otimes_{\QQ} (\cK,W) \to {\rm DR}^{\rm an}_X(\cM,W)\]
of filtered $\CC$-perverse sheaves. Recall that
\[ {\rm DR}_X(\cM) = \left[\cM \xrightarrow[]{\nabla} \Omega_X^1\otimes_{\shO}\cM \xrightarrow[]{\nabla}\dots \xrightarrow[]{\nabla} \omega_X \otimes_{\shO} \cM\right]\]
placed in degrees $-n,\dots, 0$, with filtration $W_i {\rm DR}_X(\cM) = {\rm DR}_X(W_i \cM)$. In a local choice of coordinates $x_1,\dots, x_n$ of $X$, the complex is the Koszul complex on the operators $\de_{x_1},\dots, \de_{x_n}$. 

The filtration $F_\bullet \cM$ is the ``Hodge filtration'' and $W_\bullet \cM$ is the ``weight filtration''. The $\QQ$-perverse sheaf $\cK$ is called the \emph{$\QQ$-structure}, and the functor ${\rm rat}\colon {\rm MHM}(X) \to {\rm Perv}(X)$ sending $M$ to $\cK$ is faithful.

These data are subject to various conditions, which we will not explain fully here. The essential idea is that mixed Hodge modules on a point should be exactly the graded-polarizable mixed Hodge structures, and then the definition for higher dimension varieties follows by induction on the dimension.

The category ${\rm MHM}(X)$ is abelian. In fact, any morphism $\varphi \colon (\cM,F,W) \to (\cN,F,W)$ underlying a morphism of mixed Hodge modules is bi-strict with respect to $F$ and $W$. We say a mixed Hodge module $M$ is \emph{pure of weight} $w$ if ${\rm Gr}^W_i \cM = 0$ for all $i\neq w$.

We let $D^b({\rm MHM}(X))$ denote the bounded derived category of mixed Hodge modules on $X$. 

The theory of mixed Hodge modules is endowed with a six functor formalism which, under the functor ${\rm rat}\colon D^b({\rm MHM}(X)) \to D^b({\rm Perv}(X))$ agrees with the six functor formalism on perverse sheaves and which agrees with the six functors on underlying $\Dmod$-modules. In particular, given any morphism $f\colon X \to Y$ between smooth varieties, we have functors
\[ f_*,\, f_! \colon D^b({\rm MHM}(X)) \to D^b({\rm MHM}(Y)),\]
\[ f^*,f^! \colon D^b({\rm MHM}(Y)) \to D^b({\rm MHM}(X)),\]
with $f^*$ left adjoint to $f_*$, $f_!$ left adjoint to $f^!$. Moreover, there is an exact functor
\[ \mathbf D_X \colon {\rm MHM}(X)^{\rm op} \to {\rm MHM}(X)\]
so that $f^! = \mathbf D_X f^* \mathbf D_Y, f_! = \mathbf D_Y f_* \mathbf D_X$. The dual functor satisfies
\[ {\rm Gr}^W_{-i} \mathbf D_X(M) \cong \mathbf D_X({\rm Gr}^W_i M).\]

Using local embeddings into smooth varieties, the categories ${\rm MHM}(Z)$ and $D^b({\rm MHM}(Z))$ make sense for an arbitrary complex variety $Z$, and admit six functor formalisms as described above. Similarly, the associated graded pieces ${\rm Gr}^F_p {\rm DR}_Z(M)$ give objects of $D^b_{\rm coh}(\shO_Z)$ which are independent of the choice of local smooth embeddings.

For any two smooth varieties $X,Y$ and for any $M\in {\rm MHM}(X)$, the functor
\[ M \boxtimes - \colon {\rm MHM}(Y) \to {\rm MHM}(X\times Y)\]
is exact. On underlying filtered objects, it is given by convolution of filtrations: we have
\[ F_k (\cM\boxtimes \cN) = \sum_{i+j = k} F_i \cM \boxtimes F_j \cN,\]
\[ W_k (\cM\boxtimes \cN) = \sum_{i+j= k}W_i \cM \boxtimes W_j \cN.\]

\begin{example} For $X$ a smooth variety of dimension $n$, the \emph{trivial Hodge module} is
\[ \QQ_X^H[n] = (\shO_X, F,W,\underline{\QQ}_{X^{\rm an}}[n]),\]
where ${\rm Gr}^F_{-\bullet} \shO_X = {\rm Gr}^W_\bullet \shO_X = 0$ except for $\bullet = n$.

In general, given the trivial Hodge structure $\QQ^H \in {\rm MHM}({\rm pt})$, if $a_Z\colon Z \to {\rm pt}$ is the constant map, then the trivial Hodge module on $Z$ is actually an object in $D^b({\rm MHM}(Z))$ given by
\[ \QQ^H_Z = a_Z^* \QQ^H,\]
which might have many non-zero cohomology modules and those modules may not be pure.
\end{example}

\begin{example}(\cite{SaitoMHM}*{(4.4.2)}) \label{eg-SmoothPullback} Assume $X$ and $Y$ are smooth varieties. Let $p\colon X\times Y \to Y$ be the projection. Then the pullback functor $p^*$ is given by $\QQ^H_X \boxtimes -$.
\end{example}

\begin{example} (\cite{SaitoMHM}*{(4.4.3)}) \label{eg-BaseChange} Consider a Cartesian diagram
\[ \begin{tikzcd} Y' \ar[d,"g_Y"] \ar[r,"f'"] & X' \ar[d,"g_X"] \\ Y \ar[r,"f"] & X \end{tikzcd}.\]

Then there are natural, canonical isomorphisms of functors
\[ g_X^* f_! = f'_! g_Y^*,\quad g_X^! f_* = f'_* g_Y^!.\]
\end{example}

\begin{example} \label{eg-TateTwist} For any $M \in {\rm MHM}(X)$ and $j\in \ZZ$, we can define another mixed Hodge module $M(j) \in {\rm MHM}(X)$, which is called the \emph{Tate twist of $M$ by $j$}. It has the same underlying $\Dmod_X$-module $\cM$, but the filtrations are shifted:
\[ F_{\bullet}(\cM(j)) = F_{\bullet-j}(\cM),\, W_\bullet(\cM(j)) = W_{\bullet+2j} \cM.\]
\end{example}

If $M$ is a pure Hodge module of weight $w$ on $X$, then by definition it is \emph{polarizable}, which implies that there exists an isomorphism of pure Hodge modules of weight $-w$:
\[ \mathbf D_X(M) \cong M(w).\]

The Hodge filtration induces a filtration on ${\rm DR}_X(\cM)$ by
\[ F_p {\rm DR}_X(\cM) = \left[ F_{p-n} \cM \xrightarrow[]{\nabla} \Omega_X^1 \otimes_{\shO} F_{p-n+1}\cM \xrightarrow[]{\nabla} \dots \xrightarrow[]{\nabla} \omega_X \otimes_{\shO} F_p \cM \right],\]
so that ${\rm Gr}^F_p {\rm DR}_X(\cM)$ is actually a bounded complex of coherent $\shO$-modules with $\shO$-linear differentials. The functor ${\rm Gr}^F_p {\rm DR}_X(-)$ extends to an exact functor
\[ {\rm Gr}^F_p {\rm DR}_X(-) \colon D^b({\rm MHM}(X)) \to D^b_{\rm coh}(\shO_X),\]
where the right-hand side is the bounded derived category of $\shO_X$-modules with coherent cohomology. 

Moreover, the functor is well-behaved under various operations, as given by the following proposition:
\begin{proposition}[\cite{SaitoMHP}*{Lem. 2.3.6}] \label{prop-GrDRProps} Let $f\colon X \to Y$ be a proper morphism between smooth varieties and let $M^\bullet \in D^b({\rm MHM}(X))$. Then, for any $p\in \ZZ$ there is a quasi-isomorphism
\[ {\rm Gr}^F_p {\rm DR}_Y(f_*(M^\bullet)) \cong  Rf_* {\rm Gr}^F_p {\rm DR}_X(M^\bullet),\]
where $Rf_*$ is the right derived functor of the usual $\shO$-module push-forward.

Moreover, by \cite{SaitoMHP}*{Sect. 2.4}, we have
\[\mathbb D_X {\rm Gr}^F_p {\rm DR}_X(M^\bullet) \cong {\rm Gr}^F_{-p} {\rm DR}_X(\mathbf D_X(M^\bullet)),\]
where $\mathbb D_X(-) = R \mathcal Hom_{\shO_X}(-,\omega_X[\dim X])$ is the Grothendieck duality functor on $X$.
\end{proposition}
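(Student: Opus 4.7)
The plan is to unpack Saito's definitions of the filtered direct image and filtered duality for $\Dmod$-modules, and then observe that both statements become essentially tautological at the level of associated graded pieces once one checks strictness.

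First I would handle the pushforward compatibility. Reducing to the case where $M^\bullet$ is a single mixed Hodge module $(\cM,F)$ and noting that $f_*$ on $\mathrm{MHM}$ is defined on underlying filtered $\Dmod$-modules via Saito's filtered direct image, I would write
\[ f_+(\cM,F) \;=\; Rf_*\!\left( (\cM,F) \otimes^{L}_{(\Dmod_X,F)} (\Dmod_{X\to Y},F_{\mathrm{ord}}) \right)[\dim X-\dim Y], \]
resolved by the filtered relative Spencer (or de Rham) complex so that the expression is a genuine complex of filtered $f^{-1}\Dmod_Y$-modules with $f^{-1}\shO_Y$-linear differentials on associated graded. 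The central point is that after applying $\mathrm{Gr}^F_p\,\mathrm{DR}_Y$, the tensor factor $\Dmod_{X\to Y}$ contributes precisely the relative de Rham piece $\Omega^{\bullet}_{X/Y}$, and one recovers $\mathrm{Gr}^F_p \mathrm{DR}_X(\cM)$ up to an overall shift absorbed by our indexing convention. Since $f$ is proper, $Rf_*$ commutes with taking $\mathrm{Gr}^F$ on this strict complex, producing the desired quasi-isomorphism. The main obstacle here is checking the strictness of $F$ under the filtered pushforward — this is the core of Saito's theory (the strictness theorem for direct images of Hodge modules), and I would simply invoke it rather than reprove it.

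Second, for the duality statement I would use Saito's construction of $\mathbf{D}_X$ on filtered $\Dmod$-modules, namely
\[ \mathbf{D}_X(\cM,F) \;=\; R\shHom_{\Dmod_X}\!\bigl((\cM,F),(\Dmod_X,F)\bigr)\otimes_{\shO_X}\omega_X^{-1}[\dim X], \]
with the Hodge filtration induced from the order filtration on $\Dmod_X$ and the standard self-duality of the Spencer/Koszul resolution. Taking $\mathrm{Gr}^F_{-p}\,\mathrm{DR}_X$ converts the $\Dmod_X$-linear $\shHom$ into $\shO_X$-linear $\shHom$ against $\mathrm{Gr}^F_\bullet \Dmod_X \cong \mathrm{Sym}^\bullet T_X$ terms, and the Koszul duality collapsing of $\omega_X \otimes^L \mathrm{Sym}^\bullet T_X$ in the de Rham direction yields exactly Grothendieck duality against $\omega_X[\dim X]$. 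The sign $p \mapsto -p$ comes from the twist in the definition of $\mathbf{D}_X$ shifting $F$ by the weight, combined with our right-$\Dmod$-module indexing of the Hodge filtration on $\mathrm{DR}$.

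The step I expect to be most delicate is keeping track of filtration indexing conventions: Saito works with right $\Dmod$-modules throughout, while the paper uses left $\Dmod$-modules with right-module Hodge indexing, so some care is needed to check that the shifts in $\mathrm{DR}$ and $\mathbf{D}_X$ combine to give the clean formula $p \leftrightarrow -p$ without additional Tate twists. Once the conventions are aligned, both statements follow from the strictness and the compatibility of Saito's filtered constructions with the de Rham functor, which is exactly the content of the cited passages of \cite{SaitoMHP}.
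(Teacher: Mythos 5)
The paper does not actually prove this proposition: it states it as a direct citation to Saito (\cite{SaitoMHP}*{Lem.\ 2.3.6} and Sect.\ 2.4) and uses it as a black box. So there is no in-paper argument to compare against.

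Your sketch is a reasonable outline of how Saito's proof goes, and it identifies the two essential ingredients correctly: for the direct-image compatibility, the filtered Spencer/de~Rham resolution of the transfer module $\Dmod_{X\to Y}$ combined with Saito's strictness theorem for (proper) direct images; for the duality compatibility, the filtered self-duality of the Spencer/Koszul resolution, which after passing to associated gradeds becomes Grothendieck duality and produces the sign flip $p \mapsto -p$. These are precisely the inputs in the cited passages of \cite{SaitoMHP}.

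Two points of caution worth flagging. First, the phrase ``since $f$ is proper, $Rf_*$ commutes with taking $\mathrm{Gr}^F$ on this strict complex'' inverts the logic: properness is needed so that Saito's strictness theorem applies, and it is \emph{strictness} that forces $\mathrm{Gr}^F$ to commute with taking cohomology (equivalently, degeneration of the relevant spectral sequence at $E_1$). Properness alone buys nothing here; strictness is a genuine Hodge-theoretic input, not a formal consequence of coherence. Second, the formula you wrote for $f_+(\cM,F)$ carries a shift $[\dim X - \dim Y]$ that belongs to the left-module convention; Saito works with right modules, where the shift is absent (and the paper's right-module indexing of $F$ on a left module compounds this bookkeeping). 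None of these affect the validity of the assertion, but a careful write-up would have to sort them out, and you do acknowledge this in the final paragraph.
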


Given any object $A$ with bounded below filtration $F_\bullet A$, we let $p(A,F) = \min\{p \mid F_p A \neq 0\}$. For $M$ a mixed Hodge module, we let $p(M) = p(\cM,F)$ where $F_\bullet \cM$ is the Hodge filtration on the underlying $\Dmod$-module. For $M^\bullet \in D^b({\rm MHM}(X))$, we have $p(M^\bullet) = \min_{i\in \ZZ} p(\cohH^i(M^\bullet))$.


\begin{lemma} \label{lem-technical} Let $M^\bullet \in D^b({\rm MHM}(X))$. Then
\[ p(M^\bullet) = \min\{p \mid {\rm Gr}^F_p {\rm DR}_X(M^\bullet) \text{ is not acyclic}.\}\]

\end{lemma}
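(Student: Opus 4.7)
The plan is to prove the equality by induction on the number of nonzero cohomology objects of $M^\bullet$, handling the base case of a single mixed Hodge module directly from the explicit form of the de Rham complex, and upgrading the statement to track precisely where the cohomology of $\Gr^F_{p(M^\bullet)}\DR_X(M^\bullet)$ lives. For the base case, I would assume $M^\bullet \cong N[-k]$ for a single mixed Hodge module $N$ with underlying $\Dmod_X$-module $\cN$, and set $p_0 := p(\cN, F)$. The term in cohomological degree $k - n + j$ of $\Gr^F_p \DR_X(N[-k])$ is $\OmX^j \otimes_{\shO_X} \Gr^F_{p-n+j}\cN$. For $p < p_0$, every graded piece $\Gr^F_{p-n+j}\cN$ vanishes; for $p = p_0$, all vanish except $\Gr^F_{p_0}\cN = F_{p_0}\cN \neq 0$ at $j = n$, leaving the nonzero sheaf $\omegaX \otimes F_{p_0}\cN$ in degree $k$. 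This handles the base case and records the sharpened fact that the cohomology of $\Gr^F_{p_0}\DR_X(N[-k])$ is concentrated in the single cohomological degree $k$.

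For the inductive step, with $M^\bullet$ having top nonzero cohomology in degree $b$, I would apply the exact functor $\Gr^F_p \DR_X$ to the truncation triangle
\[ \tau_{\leq b-1}M^\bullet \to M^\bullet \to \cohH^b(M^\bullet)[-b] \to \]
and examine the associated long exact sequence. For $p < p(M^\bullet)$ both outer terms are acyclic by the inductive hypothesis, hence so is the middle. For $p = p(M^\bullet)$, the inductive hypothesis combined with the base case shows that the cohomology of $\Gr^F_p \DR_X(\tau_{\leq b-1}M^\bullet)$ lives only in degrees $\leq b-1$ while that of $\Gr^F_p \DR_X(\cohH^b(M^\bullet)[-b])$ lives only in degree $b$. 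Hence all connecting maps vanish, the long exact sequence breaks up, and one obtains $\cohH^i(\Gr^F_{p(M^\bullet)}\DR_X(M^\bullet)) \cong \omegaX \otimes_{\shO_X} F_{p(M^\bullet)}\cohH^i(\cM^\bullet)$ exactly when $p(\cohH^i(M^\bullet)) = p(M^\bullet)$ and zero otherwise. In particular, picking any $i$ with $p(\cohH^i(M^\bullet)) = p(M^\bullet)$ produces a nonzero cohomology class, which combined with the vanishing for $p < p(M^\bullet)$ gives the lemma.

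The main delicate point is the step at $p = p(M^\bullet)$, where a priori the long exact sequence associated to the truncation triangle could produce cancellation. The argument hinges on the observation already visible in the base case that $\Gr^F_{p(M^\bullet)}\DR_X$ of a mixed Hodge module placed in a single cohomological degree $k$ has all its cohomology concentrated in that same degree $k$; as a consequence, the contributions coming from $\tau_{\leq b-1}M^\bullet$ and from $\cohH^b(M^\bullet)[-b]$ sit in disjoint ranges of cohomological degree and cannot interact in the long exact sequence, which is what makes the strengthened inductive statement propagate.
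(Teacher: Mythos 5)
Your proof is correct and takes essentially the same approach as the paper. Both arguments rest on the key observation that, for a single mixed Hodge module $N$, the complex ${\rm Gr}^F_{p(N)}{\rm DR}_X(N)$ has cohomology concentrated in degree $0$ (since all the lower-indexed graded pieces vanish, leaving only the term $\omega_X \otimes F_{p(N)}\cN$); the paper organizes the deduction via the spectral sequence ${}^{\ell}E_2^{p,q} = \cohH^p{\rm Gr}^F_\ell{\rm DR}_X(\cohH^q M^\bullet) \Rightarrow \cohH^{p+q}{\rm Gr}^F_\ell{\rm DR}_X(M^\bullet)$ (which degenerates at $E_2$ at the minimal level for exactly this reason), while you unwind the same spectral sequence into an explicit induction over truncation triangles with a strengthened hypothesis recording the cohomological degree.
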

\begin{proof} The argument is standard, but we include it for convenience of the reader.

We want to see that
\[ F_k M^\bullet = 0 \text{ if and only if } {\rm Gr}^F_\ell {\rm DR}_X(M^\bullet) = 0 \text{ for all } \ell \leq k,\]
where the first expression is equivalent to saying $p(M^\bullet) > k$. Indeed, if we represent $M^\bullet$ by a bounded complex of morphisms of mixed Hodge modules, then
\[ F_k \cohH^j(\cM^\bullet) = \cohH^j(F_k \cM^\bullet)\]
by strictness of morphisms.

We have the spectral sequence (shown using the standard truncation functors on $D^b({\rm MHM}(X))$):
\[ {}^{\ell} E_2^{p,q} = \cohH^p {\rm Gr}_\ell^F {\rm DR}_X(\cohH^q M^\bullet) \implies \cohH^{p+q} {\rm Gr}^F_\ell {\rm DR}_X(M^\bullet).\]

Note that if $F_k M^\bullet =0$ (meaning $F_k \cohH^j M^\bullet = 0$ for all $j\in \ZZ$), then ${}^\ell E_2^{p,q} = 0$ for all $\ell \leq k$ and all $p,q$. The spectral sequence then shows that ${\rm Gr}^F_\ell {\rm DR}_X(M^\bullet) = 0$ for all $\ell \leq k$, as desired.

Conversely, assume ${\rm Gr}^F_\ell {\rm DR}_X(M^\bullet) = 0$ for all $\ell \leq k$. Let $\sigma_0 = \min\{\sigma \mid F_\sigma M^\bullet \neq 0\}$, which is a finite value because $M^\bullet$ is a bounded complex (meaning there are only finitely many cohomology modules to consider). The claim is that $\sigma_0 > k$. If not, then $\sigma_0 \leq k$, and so by our assumed vanishing, we have
\[ {}^{\sigma_0} E_\infty^{p,q} = 0.\]

But for any fixed $q$, the only non-zero ${}^{\sigma_0} E_2^{p,q}$ is for $p = 0$. Indeed, the last few terms of the associated graded de Rham complex are
\[ \dots \to \Omega_X^{n-1} \otimes {\rm Gr}^F_{\sigma_0 -1} \cohH^q(M^\bullet) \xrightarrow[]{d} \omega_X \otimes {\rm Gr}^F_{\sigma_0} \cohH^q(M^\bullet),\]
and by definition of $\sigma_0$, all the leftmost terms are 0. Thus, ${}^{\sigma_0} E_\infty^{p,q} = {}^{\sigma_0} E_2^{p,q} = 0$.

So we have reduced to checking the claim when $M$ is a mixed Hodge module. But it is easy to see that $F_k M = 0$ if and only if ${\rm Gr}^F_{\ell} {\rm DR}_X M = 0$ for all $\ell \leq k$.
\end{proof}

\begin{corollary}\label{cor-technical}
 Let $\psi \colon M^\bullet \to N^\bullet$ be a morphism in $D^b({\rm MHM}(X))$. Then $F_k \psi$ is a quasi-isomorphism if and only if ${\rm Gr}^F_\ell {\rm DR}_X(\psi)$ is a quasi-isomorphism for all $\ell \leq k$.
\end{corollary}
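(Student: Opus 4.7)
The plan is to reduce the statement to \lemmaref{lem-technical} by passing to the cone of $\psi$. Let $C^\bullet = \Cone(\psi) \in D^b(\MHM(X))$, so that we have a distinguished triangle
\[ M^\bullet \xrightarrow{\psi} N^\bullet \to C^\bullet \xrightarrow{+1}\]
in $D^b(\MHM(X))$. The idea is that each of the two conditions in the statement is equivalent to a vanishing condition on $C^\bullet$, and these two vanishing conditions are identified by \lemmaref{lem-technical}.

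First I would verify that $F_k \psi$ is a quasi-isomorphism if and only if $F_k \cohH^j(C^\bullet) = 0$ for all $j \in \ZZ$, equivalently $p(C^\bullet, F) > k$. For this, represent $\psi$ by a strict morphism between bounded complexes of mixed Hodge modules. Because morphisms of mixed Hodge modules are bi-strict with respect to $F$, the filtered cone $F_\bullet C^\bullet$ can be formed term-by-term and commutes with taking cohomology; the long exact cohomology sequence then shows that $F_k \psi$ is a quasi-isomorphism precisely when $F_k \cohH^j(C^\bullet) = 0$ for every $j$, which by definition is the statement $p(C^\bullet,F) > k$ (or $C^\bullet = 0$).

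Next I would apply the exact triangulated functor $\Gr^F_\ell \DR_X(-)$ to the triangle above. This yields a distinguished triangle
\[ \Gr^F_\ell \DR_X(M^\bullet) \xrightarrow{\Gr^F_\ell \DR_X(\psi)} \Gr^F_\ell \DR_X(N^\bullet) \to \Gr^F_\ell \DR_X(C^\bullet) \xrightarrow{+1} \]
in $D^b_{\coh}(\shO_X)$, so $\Gr^F_\ell \DR_X(\psi)$ is a quasi-isomorphism if and only if $\Gr^F_\ell \DR_X(C^\bullet)$ is acyclic. Thus the second condition in the statement is equivalent to $\Gr^F_\ell \DR_X(C^\bullet)$ being acyclic for all $\ell \leq k$.

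Finally, \lemmaref{lem-technical} applied to $C^\bullet$ says exactly that $p(C^\bullet,F) > k$ is equivalent to $\Gr^F_\ell \DR_X(C^\bullet)$ being acyclic for all $\ell \leq k$, which matches the two conditions we have identified and proves the corollary. The only non-routine ingredient is the bi-strictness of morphisms in $\MHM(X)$, which is needed to ensure that the filtered cone computes $F_k C^\bullet$ correctly and that $F_k$ commutes with taking cohomology of the complex representing $\psi$; everything else is bookkeeping with distinguished triangles and the previous lemma.
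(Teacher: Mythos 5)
Your argument is correct and is exactly the paper's proof: the paper's one-line justification ("Apply \lemmaref{lem-technical} to the cone of $\psi$") compresses precisely the steps you spell out — passing to the cone, using bi-strictness to identify $F_k\psi$ being a quasi-isomorphism with $p(C^\bullet,F)>k$, and applying $\Gr^F_\ell\DR_X$ to the triangle. Nothing further to add.
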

\begin{proof} Apply \lemmaref{lem-technical} to the cone of the morphism $\psi$ in $D^b({\rm MHM}(X))$.
\end{proof}

The following two lemmas are proven in a way similar to \cite{SaitoMHM}*{Rmk. 4.6(1)}. The main idea is to establish the result for variations of mixed Hodge structures and to use induction on the dimension of the support.

\begin{lemma} \label{lem-pInvariantVanishing} Let $M^\bullet \in D^b({\rm MHM}(Z))$. Then
\[ p(M^\bullet) \geq j \iff p(i_x^! M^\bullet) \geq j \text{ for all } x\in Z.\]
\end{lemma}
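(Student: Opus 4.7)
The plan is to proceed by induction on $s := \dim \operatorname{supp}(M^\bullet)$, following the strategy indicated in the remark preceding the lemma (establishing first the statement for VMHS, then inducting on dimension of support).

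\textbf{Forward direction.} Assume $F_{j-1} M^\bullet = 0$ and decompose $i_x^! = \mathbf{D}_{\mathit{pt}} \circ i_x^\ast \circ \mathbf{D}_Z$. Each of these three functors interacts with the Hodge filtration in a controlled way (with Tate twists absorbing dimension shifts), and strictness of morphisms of mixed Hodge modules with respect to $F_\bullet$ then propagates the vanishing, forcing $F_{j-1} i_x^! M^\bullet = 0$ for every $x \in Z$.

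\textbf{Reverse direction, base and VMHS cases.} When $s = 0$, $M^\bullet$ is a direct sum of complexes of MHS supported at isolated points, and $i_x^!$ recovers each summand up to a degree shift, giving the equality of $p$-invariants directly. The essential intermediate step is the VMHS case: if $M$ corresponds to a VMHS $\mathcal V$ on a smooth $U$ of dimension $u$, then the underlying $\Dmod_U$-module $\mathcal V \otimes_{\CC} \shO_U$ has Hodge filtration built fiberwise from that of $\mathcal V$, so $p(M) = p(\mathcal V_x) - u$ for any $x \in U$. A direct computation using $i_x^! \QQ_U^H[u] = \QQ^H(-u)[-u]$ (whose $p$-invariant is $-u$, matching $p(\QQ_U^H[u])$) shows $p(i_x^! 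M) = p(\mathcal V_x) - u = p(M)$ for every $x \in U$.

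\textbf{Inductive step.} For $s > 0$, assume the result for complexes with strictly smaller-dimensional support. Choose a dense smooth open stratum $j \colon U \hookrightarrow Z$ of $\operatorname{supp}(M^\bullet)$ on which the cohomology sheaves of $M^\bullet$ restrict to shifted VMHS, and let $i \colon Y \hookrightarrow Z$ denote the closed complement, with $\dim Y < s$. The hypothesis $p(i_x^! M^\bullet) \geq j$ for all $x \in Z$ then splits into two pieces:
\begin{renumerate}
\item for $x \in U$, the VMHS substep gives $p(j^\ast M^\bullet) \geq j$;
\item for $x \in Y$, factoring $i_x^! = (i_x^!)_Y \circ i^!$ and applying the inductive hypothesis to $i^! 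M^\bullet$ on $Y$ yields $p(i^! M^\bullet) \geq j$.
\end{renumerate}
The distinguished triangle
\[ i_\ast i^! M^\bullet \to M^\bullet \to \derR j_\ast j^\ast M^\bullet \to^{+1} \]
in $D^b(\MHM(Z))$, together with the fact that $i_\ast$ preserves the $p$-invariant and $\derR j_\ast$ does not decrease it, then gives $p(M^\bullet) \geq j$, completing the induction.

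\textbf{Main obstacle.} The subtlest point is controlling $p$ under $\derR j_\ast$ for the open embedding $j$: Saito's formula for the Hodge filtration on $\derR j_\ast$ is phrased via the $V$-filtration along a compactifying boundary divisor, and one must rule out that nearby-cycle contributions produce new pieces of $F_\bullet$ in degrees below $p(j^\ast M^\bullet)$. This is the real engine of the inductive argument and requires a careful analysis of how $V$-filtration and nearby cycles interact with $F_\bullet$; without it, the triangle alone does not give the desired bound.
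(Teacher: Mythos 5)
Your reverse-direction strategy parallels the paper's (induct on dimension, stratify into a smooth open with VMHS cohomology and its closed complement, use the triangle $i_*i^!M^\bullet \to M^\bullet \to j_*j^*M^\bullet \xrightarrow{+1}$), and you correctly identify that the crux is showing that $j_*$ along the open immersion does not decrease $p$. However, you leave that crux unproven, which is a genuine gap. You also overestimate its difficulty: the paper takes $U' = \{g\ne 0\}$ to be a \emph{principal} open, so by \propositionref{prop-VFilt} the pushforward $j_*\cN$ has no submodule supported on $\{g=0\}$, hence $F_pV^0(j_*\cN) = V^0(j_*\cN)\cap j_*j^*F_p\cN$ and $F_p j_*\cN = \sum_{i\ge 0}\de_t^i F_{p-i}V^0(j_*\cN)$, from which $F_{j-1}\cN=0$ forces $F_{j-1}j_*\cN = 0$ immediately. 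The paper invokes this silently, but it is a direct consequence of material already in the preliminaries, not a hard analysis of nearby cycles.

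Separately, your forward direction via $i_x^! = \mathbf D_{\pt}\circ i_x^*\circ \mathbf D_Z$ is unconvincing: duality does not shift the $p$-invariant by a fixed Tate twist. Even for a pure module $M$ of weight $w$ one has $p(\mathbf D_Z M) = p(M)+w$, so the shift is weight-dependent, and for mixed modules it varies across weight-graded pieces; it is not clear how vanishing of $F_{j-1}$ propagates through $\mathbf D_Z$. The paper's forward direction avoids duality entirely: $F_{j-1}\cM=0$ gives $F_{j-1}\Gr_V^\chi(\cM)=0$ for all $\chi$, so the Koszul complex $B^0(\cM)$ of \propositionref{prop-Restriction} vanishes at level $j-1$, which gives $F_{j-1}i_x^!M^\bullet=0$ directly.
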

\begin{proof} The implication 
\[ p(M^\bullet) \geq j \implies p(i_x^! M^\bullet) \geq j \text{ for all } x\in Z\]
is obvious, by definition of the functor $i_x^!$ for mixed Hodge modules (see, for example \cite{CD}*{Thm. B}).

To prove the converse, assume for all $x$ that $p(i_x^! M^\bullet) \geq j$.

We use induction on $\dim(Z)$. For $\dim(Z) = 0$, the claim is obvious.

For $\dim(Z) > 0$, there exists a Zariski open cover $Z = \bigcup_{\alpha \in I} U_\alpha$ such that, for each $\alpha \in I$, there exists $g_\alpha \in \shO(U_\alpha)$ so that the subset $U'_\alpha = \{g_\alpha \neq 0\} \subseteq U_\alpha$ is a smooth, dense open subset such that the restriction $\cohH^i(M^\bullet)\vert_{U'_\alpha}$ is a variation of mixed Hodge structures for all $i \in \ZZ$.

It suffices to prove the claim locally, so we can replace $Z$ with $U_\alpha$. We have reduced to the case that there exists $g\in \shO_Z(Z)$ such that $U' = \{g\neq 0\} \subseteq Z$ is a smooth, Zariski open dense subset so that $\cohH^i(M^\bullet)\vert_{U'}$ is a variation of mixed Hodge structures for all $i\in \ZZ$.

Let $j\colon U' \to Z$ and $i \colon \{g=0\} \to Z$ be the natural embeddings, with exact triangle
\[ i_* i^! M^\bullet \to M^\bullet \to j_*(M^\bullet\vert_{U'}) \xrightarrow[]{+1}.\]

If $p(M^\bullet) < j$, then either $p(i_* i^! M^\bullet) < j$ or $p(j_* (M^\bullet\vert_{U'})) < j$. Note that for all $x\in \{g=0\}$, we have
\[ i_x^! i_* i^! M^\bullet = i_x^! M^\bullet,\]
and so by induction on the dimension we conclude that $p(i_*i^!M^\bullet) \geq j$.

For all $x\in U'$, we have
\[ i_x^! M^\bullet = i_x^! j_*(M^\bullet) = \iota_x^!(M^\bullet\vert_{U'}),\]
where $\iota_x \colon \{x\} \to U'$ is the inclusion. We see using the spectral sequence
\[ E_2^{p,q} = \cohH^p \iota_x^! \cohH^q (M^\bullet \vert_{U'}) \implies \cohH^{p+q}\iota_x^!(M^\bullet\vert_{U'}), \]
and the fact that $\iota_x$ is non-characteristic for each cohomology module $\cohH^q(M^\bullet \vert_{U'})$ (implying that the spectral sequence degenerates at $E_2$) that $p(M^\bullet\vert_{U'}) \geq j$, and so $p(j_*(M^\bullet\vert_{U'})) \geq j$, too.

Thus, we have shown that $p(M^\bullet) \geq j$.
\end{proof}

\begin{lemma} Let $M \in {\rm MHM}(Z)$.  Then for any $x\in Z$, we have
$\cohH^j i_x^! M = 0 \text{ for all } j > \dim Z$.
\end{lemma}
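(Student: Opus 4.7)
The plan is to reduce to a standard fact about perverse sheaves. The functor $\rat$ on $\MHM(Z)$ is faithful, so a mixed Hodge module is zero if and only if its underlying perverse sheaf is zero; moreover, $\rat$ commutes with $i_x^!$ and identifies $\cohH^j$ with the perverse cohomology ${}^p\mathcal{H}^j$, which on a point coincides with the usual $\mathcal{H}^j$. Therefore it suffices to show: for every perverse sheaf $K$ on $Z$ and every $x \in Z$, $\mathcal{H}^j(i_x^! K) = 0$ whenever $j > \dim Z$.

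For this, Verdier duality gives $i_x^! K \cong \mathbf{D}_{\pt}(i_x^* \mathbf{D}_Z K)$, which on the point unfolds to $\mathcal{H}^j(i_x^! K) \cong \mathcal{H}^{-j}(\mathbf{D}_Z K)_x^{\vee}$. Setting $d := \dim Z$, the required vanishing for $j > d$ is thus equivalent to $\mathcal{H}^\ell(\mathbf{D}_Z K) = 0$ for $\ell < -d$. Since $\mathbf{D}_Z K$ is again perverse, this is the standard fact that any perverse sheaf on a $d$-dimensional variety has cohomology sheaves concentrated in the interval $[-d, 0]$: the upper bound $\ell \leq 0$ is immediate from the support condition $\dim \Supp \mathcal{H}^\ell \leq -\ell$ (which forces empty support for $\ell > 0$), while the lower bound follows by induction on $d$ through a Whitney stratification and the recollement along the top smooth stratum, or equivalently by a duality argument exploiting that $\mathbf{D}_Z$ preserves perversity. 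Applying this to $\mathbf{D}_Z K$ completes the argument.

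Alternatively, following the hint alluded to in the excerpt (``similar to \cite{SaitoMHM}*{Rmk.~4.6(1)}''), one can argue by induction on $\dim \Supp(M)$. The base case $\dim \Supp(M) = 0$ is immediate. For the inductive step, invoke Kashiwara's equivalence to assume $\Supp(M) = Z$ and choose a smooth dense open $U \subseteq Z$ with $M\vert_U$ a variation of mixed Hodge structure. For $x \in U$ the smooth-case computation places $i_x^!(M\vert_U)$ in cohomological degree $\dim Z$, while for $x \in Y := Z \setminus U$ one applies $i_x^!$ to the attaching triangle $i_* i^! M \to M \to j_* j^* M \xrightarrow{+1}$ (where $i \colon Y \to Z$, $j \colon U \to Z$) and uses the spectral sequence $E_2^{p,q} = \cohH^p (i'_x)^! \cohH^q(i^! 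M) \Rightarrow \cohH^{p+q}(i'_x)^! i^! M$ (with $i'_x \colon \{x\} \to Y$) together with the inductive hypothesis applied to each $\cohH^q(i^! M) \in \MHM(Y)$. The main obstacle in this approach is bounding the cohomology of $i_x^!(j_* j^* M)$ near the singular locus, which is exactly the issue that the perverse-sheaf reduction sidesteps via the cohomological amplitude bound.
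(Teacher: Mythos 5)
Your main argument is correct and is a genuine alternative to the paper's. You reduce to the underlying perverse sheaf: since $\rat$ is faithful and compatible with $i_x^!$ and with cohomology (and the perverse $t$-structure on a point is the ordinary one), the target vanishing is equivalent to $\mathcal H^j\bigl(i_x^!\,\rat(M)\bigr)=0$ for $j>\dim Z$; Verdier duality at the point then rewrites this as the statement that the perverse sheaf $\mathbf D_Z\,\rat(M)$ has no cohomology sheaves in degrees below $-\dim Z$, which is the standard cohomological amplitude bound. That is a clean reduction and isolates the content as a purely topological fact. The paper instead argues directly in $\MHM(Z)$ by Noetherian induction: choose a dense open $U'=\{g\neq 0\}$ on which $M$ is a variation of mixed Hodge structure, use the recollement triangle, and for $x\notin U'$ descend to $\{g=0\}$, exploiting that for a divisor inclusion the functor $i^!$ has cohomological amplitude $[0,1]$. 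In effect the paper re-derives the perverse amplitude bound inside $\MHM$; this keeps the proof parallel to the adjacent lemma on $p(M^\bullet)$, which has exactly the same inductive skeleton. Your route is shorter if one is content to cite the perverse-sheaf bound as known.

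One assertion in your closing paragraph is wrong, however: you describe bounding $\cohH^\bullet i_x^!(j_*j^*M)$ near the singular locus as ``the main obstacle'' that the perverse-sheaf reduction ``sidesteps.'' There is no such obstacle, because that term vanishes identically when $x\notin U'$. Indeed, $i_x$ then factors through the closed complement $i\colon Y=\{g=0\}\hookrightarrow Z$, and $i^!j_*=0$ for any complementary closed--open pair: applying the attaching triangle to $j_*G$ and using that $j^*j_*=\id$ shows the unit $j_*G\to j_*j^*j_*G$ is an isomorphism, so $i_*i^!j_*G=0$. This is precisely what justifies the paper's unexplained step $i_x^!M=i_x^!i_*i^!M$ for $x\notin U'$, after which the induction on $\dim\{g=0\}$ and the spectral sequence close the argument. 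The $j_*(M\vert_{U'})$ term only contributes when $x\in U'$, in which case it is $i_x^!i_*i^!M$ that vanishes and the smooth-case computation on $U'$ applies directly.
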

\begin{proof} Fix $x\in Z$. The claim is obvious if $\dim Z = 0$. 

As above, we will use the definition of mixed Hodge modules in \cite{OnTheDefinition}. We can replace $Z$ by a Zariski open neighborhood of $x$ in $Z$ because the question is local near $x$. Take such a neighborhood $U$ such that there exists a function $g\in \shO(U)$ with the property that on $U' = \{g\neq 0\}$, the module $M$ restricts to a variation of mixed Hodge structures.

We have the exact triangle
\[ i_* i^! M \to M \to j_*(M\vert_{U'}) \xrightarrow[]{+1}.\]

If $x\in U'$, then we have
\[ i_x^! M = i_x^! j_*(M\vert_{U'}) = i_{x,U'}^!(M\vert_{U'})\]
and the claim is obvious as $U'$ is smooth of dimension $\dim(Z)$.

If $x\notin U'$, then
\[ i_x^! i_* i^!M = i_x^! M\]
and so we can use induction on the dimension, using that each cohomology of $i_* i^! M$ is supported on $\{g=0\}$ which has strictly smaller dimension than $Z$. Then one uses induction on the dimension of the support and the spectral sequence:
\[ E_2^{i,j} = \cohH^i i_x^! \cohH^j(i_* i^! M) \implies \cohH^{i+j} i_x^! M.\]

Note that as $i$ is the inclusion of a divisor, $E_{2}^{i,j} \neq 0$ implies $j= 0, 1$. Thus, by induction on the dimension of the support, $E_2^{i,j}\neq 0$ implies $i+j \leq 1 + \dim \{g=0\} = \dim Z$.
\end{proof}

\begin{lemma} \label{lem-CohomologyBounds} Let $M^\bullet \in D^b({\rm MHM}(Z))$ be such that there exists $c\in \ZZ, \ell \in \ZZ_{\geq 0}$ such that
\[ \dim {\rm Supp}(\cohH^{i+c} M^\bullet) \leq \ell - i.\]
Then $\cohH^{j+c} i_x^! M^\bullet = 0$ for all $j > \ell$ and all $x\in Z$.
\end{lemma}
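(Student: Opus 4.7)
The plan is to run the standard hypercohomology spectral sequence
\[ E_2^{p,q} = \cohH^p i_x^! \cohH^q(M^\bullet) \Longrightarrow \cohH^{p+q} i_x^!(M^\bullet) \]
against a sharpened version of the preceding lemma. What I need is a support-sensitive refinement: for any $M \in \MHM(Z)$ and any $x \in Z$,
\[ \cohH^p i_x^! M = 0 \quad \text{for all } p > \dim \Supp(M). \]
Granted this, the lemma falls out in a line. Indeed, setting $q = i+c$, the hypothesis $\dim \Supp(\cohH^{i+c} M^\bullet) \leq \ell - i$ says $E_2^{p,q} = 0$ whenever $p > \ell - (q - c)$, i.e.\ the $E_2$-page is supported in $p + q \leq \ell + c$. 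The abutment then forces $\cohH^n i_x^! M^\bullet = 0$ for $n > \ell + c$, which is the claim with $n = j + c$.

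The only real content is the refinement. Write $W = \Supp(M)$. If $x \notin W$, then $M$ is zero on a Zariski open neighborhood of $x$ and $i_x^! M = 0$ trivially, so assume $x \in W$. Kashiwara's equivalence for mixed Hodge modules identifies $M$ with $(i_W)_\ast M'$ for a unique $M' \in \MHM(W)$. Factoring $i_x$ as $\{x\} \xrightarrow{i_{x,W}} W \xrightarrow{i_W} Z$ and using $(i_W)^! (i_W)_\ast = \id$ (valid for closed embeddings in $D^b(\MHM)$), we get $i_x^! M = i_{x,W}^! M'$. The preceding lemma applied on $W$ now yields $\cohH^p i_{x,W}^! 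M' = 0$ for $p > \dim W$, which is exactly the desired support-sensitive bound.

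The main (and only) delicate point is invoking Kashiwara's equivalence and the adjunction identity $(i_W)^!(i_W)_\ast = \id$ in the mixed Hodge module setting; everything else is bookkeeping in the spectral sequence. Once the refined single-module statement is in hand, the proof of the lemma itself is essentially one inequality manipulation, with no induction on $\dim Z$ required beyond what was already performed to establish the previous lemma.
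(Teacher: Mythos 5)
Your proof is correct and follows essentially the same route as the paper: run the hypercohomology spectral sequence $E_2^{p,q}=\cohH^p i_x^! \cohH^q M^\bullet \Rightarrow \cohH^{p+q} i_x^! M^\bullet$ and feed the support hypothesis into the preceding lemma (the paper first normalizes $c=0$ by a shift, but this is cosmetic). The one place where you are more careful than the paper is the reduction to the support: the preceding lemma as stated gives vanishing for $p>\dim Z$, and you justify upgrading this to $p>\dim{\rm Supp}(\cohH^q M^\bullet)$ via Kashiwara's equivalence and the isomorphism $(i_W)^!(i_W)_*\cong\id$ for a closed embedding, whereas the paper simply says ``by the previous lemma and the assumption on $\dim{\rm Supp}(\cohH^j M^\bullet)$'' and leaves that step implicit.
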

\begin{proof} By shifting $M^\bullet$ we can assume $c = 0$. By the spectral sequence
\[ E_2^{i,j} = \cohH^i i_x^! \cohH^j M^\bullet \implies \cohH^{i+j} i_x^! M^\bullet,\]
we want to show that $\cohH^i i_x^! \cohH^j M^\bullet = 0$ for all $i+j > \ell$. By the previous lemma and the assumption on $\dim {\rm Supp}(\cohH^j M^\bullet)$, we get the desired vanishing.
\end{proof}

\subsection{Poincar\'{e} duality and higher singularities} \label{subsect-PDHS}
Let $Z$ be a pure $d$-dimensional complex variety. The trivial Hodge module $\QQ^H_Z \in D^b({\rm MHM}(Z))$ satisfies the property that if $a_Z \colon Z \to {\rm pt}$ is the constant map, then
\[ \cohH^j (a_Z)_*\QQ^H_Z \in {\rm MHM}({\rm pt}) = {\rm MHS}\]
gives Deligne's mixed Hodge structure on the cohomology $H^j(Z,\QQ)$. Moreover,
\[ \cohH^j (a_Z)_! \QQ^H_Z \in {\rm MHS}\]
gives the canonical mixed Hodge structure on compactly supported cohomology $H^j_{\rm c}(Z,\QQ)$.

Although $\QQ_Z^H[d] \in D^b({\rm MHM}(Z))$ is not necessarily a single mixed Hodge module, it is known (for example, by comparing to the underlying perverse sheaf) that $\cohH^j( \QQ_Z^H[d]) \neq 0$ implies $j \leq 0$. Moreover, $\cohH^0(\QQ_Z^H[d]) \in {\rm MHM}(Z)$ satisfies the property that ${\rm Gr}^W_d \cohH^0(\QQ_Z^H[d])$ has underlying perverse sheaf equal to ${\rm IC}_Z$, the intersection complex perverse sheaf of \cite{BBDG}. By the weight formalism for mixed Hodge modules, we know that ${\rm Gr}^W_\ell \cohH^0(\QQ_Z^H[d]) \neq 0$ implies $\ell \leq d$. 

We write
\[{\rm IC}^H_Z = {\rm Gr}^W_d \cohH^0(\QQ_Z^H[d]),\]
and so in particular, there is a natural morphism
\[ \gamma_Z \colon \QQ^H_Z[d] \to {\rm IC}_Z^H,\]
which induces
\[ \cohH^0 \gamma_Z \colon \cohH^0(\QQ_Z^H[d]) \to {\rm IC}_Z^H.\]

As ${\rm IC}_X^H$ is pure, polarizable of weight $d$, there exists an isomorphism
\[ \mathbf D_Z ({\rm IC}_Z^H) \cong {\rm IC}_Z^H(d),\]
where $\mathbf D_Z$ is the duality of mixed Hodge modules on $Z$. On underlying perverse sheaves, it is Verdier duality.

By duality, we can also realize ${\rm IC}_Z^H$ as ${\rm Gr}^W_d(\cohH^0(\mathbf D_Z(\QQ^H_Z[d])(-d)))$. Again by the weight formalism, we see that ${\rm IC}_Z^H$ can be written as $W_d(\cohH^0(\mathbf D_Z(\QQ^H_Z[d])(-d)))$, and in particular, we have a monomorphism
\[ \cohH^0 \gamma_Z^\vee \colon {\rm IC}^H_Z \to \cohH^0(\mathbf D_Z(\QQ^H_Z[d])(-d)),\]
which, up to Tate twist, is dual to $\cohH^0 \gamma_Z$.

The composition $\gamma_Z^\vee \circ \gamma_Z$ gives a morphism
\[ \cohH^0(\QQ_Z^H[d]) \to \cohH^0(\mathbf D_Z(\QQ_Z^H[d])(-d)),\]
which uniquely determines a morphism in $D^b({\rm MHM}(Z))$:
\[ \psi_Z = \gamma_Z^\vee \circ \gamma_Z \colon \QQ_Z^H[d] \to \mathbf D_Z(\QQ_Z^H[d])(-d).\]

\begin{remark} \label{rmk-uniqueness} Saito \cite{SaitoMHM}*{(4.5.14)} shows that the morphism $\psi_Z$ is unique up to scalar multiplication on each irreducible component of $Z$.
\end{remark}

If the morphism $\psi_Z$ is a quasi-isomorphism, we say that the variety $Z$ is a \emph{rational} (or $\mathbb{Q}$-)\emph{homology manifold} (this notion is also called {\it rational smoothness} in the literature). This can be checked on underlying perverse sheaves, and so does not require the theory of Hodge modules.

The morphism $\psi_Z$ has recently found applications in the study of so-called ``higher singularities''. The reason for this comes from the connection with the Du Bois complex of the variety $Z$. We will not review the definition of the Du Bois complex here (aside from its connection to mixed Hodge modules), for such a review, see \cites{MustataPopaDuBois,SVV,PS}.

For $Z$ a pure $d$-dimensional variety, the $p$-th Du Bois complex is $\underline{\Omega}_Z^p \in D^b_{\rm coh}(\shO_Z)$. An important consequence of the definition is that there is a natural comparison morphism in $D^b_{\rm coh}(\shO_Z)$:
\[ \alpha_p \colon \Omega_Z^p \to \underline{\Omega}_Z^p,\]
for all $0 \leq p \leq \dim(Z)$, where $\Omega_Z^p$ is the sheaf of K\"{a}hler differentials. Then $\alpha_p$ is a quasi-isomorphism for all $p$ if $Z$ is smooth.

Recall that ${\rm Gr}^F_\bullet {\rm DR}_Z(\QQ_Z^H) \in D^b_{\rm coh}(\shO_Z)$ can be defined using local embeddings into smooth varieties. Then there is a natural quasi-isomorphism (\cite{SaitoMHC}):
\[ \underline{\Omega}_Z^p[d-p] \cong {\rm Gr}^F_{-p} {\rm DR}_Z(\QQ^H_Z[d]),\]
and so we can use the theory of mixed Hodge modules to try to understand these Du Bois complexes.

By applying Grothendieck duality $\mathbb D_Z(-) = R \mathcal Hom_{\shO_Z}(-,\omega_Z^\bullet)$, where $\omega_Z^\bullet$ is the dualizing complex on $Z$, we can write (using \propositionref{prop-GrDRProps})
\[ \mathbb D_Z(\underline{\Omega}_Z^p[d-p]) = {\rm Gr}^F_{p} {\rm DR}_Z(\mathbf D_Z(\QQ^H_Z[d])),\]
and using $d-p$ in place of $p$, we get
\[ \mathbb D_Z(\underline{\Omega}_Z^{d-p}[p]) = {\rm Gr}^F_{d-p} {\rm DR}_Z(\mathbf D_Z(\QQ_Z^H[d])) ={\rm Gr}^F_{-p} {\rm DR}_Z(\mathbf D_Z(\QQ_Z^H[d])(-d)).\]

Thus, applying ${\rm Gr}^F_{-p}{\rm DR}_Z$ to the morphism $\psi_Z$, we obtain natural morphisms
\[ \underline{\Omega}_Z^{p}[d-p] \cong {\rm Gr}^F_{-p}{\rm DR}_Z(\QQ^H_Z[d]) \to {\rm Gr}^F_{-p}{\rm DR}_Z(\mathbf D_Z(\QQ_Z^H[d])(-d)) \cong \mathbb D_Z(\underline{\Omega}_Z^{d-p}[p])\]
which are identified with $\phi^p [d-p]$ in the notation of \cite{FriedmanLaza} and the introduction.

We now define the classes of higher singularities, though we focus on the case when $Z$ has local complete intersection singularities.

\begin{definition}[\cites{JKSY,FriedmanLaza}] \label{def-HigherSings} A pure $d$-dimensional variety $Z$ with isolated or local complete intersection singularities has $k$-Du Bois singularities if for all $p\leq k$, the maps $\alpha_p \colon \Omega_Z^p \to \underline{\Omega}_Z^p$ are quasi-isomorphisms.

Such a variety $Z$ has $k$-rational singularities if it has $k$-Du Bois singularities and if, for all $p \leq k$, the maps $\phi^p\colon \underline{\Omega}_Z^p \to \mathbb D_Z(\underline{\Omega}_Z^{d-p})[-d]$ are quasi-isomorphisms. Equivalently, $Z$ has $k$-rational singularities if for all $p\leq k$, the composition $\phi^p \circ \alpha_p$ is a quasi-isomorphism.
\end{definition}

\begin{example} If $Z$ is smooth, then we mentioned above that $\alpha_p$ is a quasi-isomorphism for all $p$. But then the morphism
\[ \underline{\Omega}_Z^p \to \mathbb D_Z(\underline{\Omega}_Z^{d-p})[-d]\]
is the well-known isomorphism of locally free sheaves
\[ \Omega_Z^p \cong \mathcal Hom_{\shO_Z}(\Omega_Z^{d-p},\omega_Z).\]

Thus, smooth varieties are $k$-rational for all $k$.
\end{example}

\begin{remark}[Non-LCI Setting] \label{rmk-prek} In the non-local complete intersection case, these notions have been studied for isolated singularities by \cite{FLIsolated}, and in general by \cite{SVV} (see also \cites{SecantVarieties,Tighe} for some interesting examples).

There are several relevant notions: in \cite{SVV}, the definitions of $k$-Du Bois and $k$-rational that we gave above are called \emph{strict} $k$-Du Bois and \emph{strict} $k$-rational, respectively. However, comparing with the K\"{a}hler differentials is rather restrictive: there are not many examples of such singularities which are not local complete intersection.

Without comparing to the K\"{a}hler differentials, there are the notions of \emph{pre-}$k$-Du Bois and \emph{pre-}$k$-rational. Following \cite{SVV}, pre-$k$-Du Bois is the condition that $\cohH^i(\underline{\Omega}_Z^p) = 0$ for all $i > 0$ and $p\leq k$, and pre-$k$-rational is the condition that $\cohH^i(\mathbb D_Z(\underline{\Omega}_Z^{d-p})[-d]) = 0$ for all $i > 0$ and $p\leq k$.

Then a normal variety $Z$ is pre-$k$-rational if and only if it is pre-$k$-Du Bois and the maps $\phi^p \colon \underline{\Omega}_Z^p \to \mathbb D_Z(\underline{\Omega}_Z^{d-p})[-d]$ defined above are quasi-isomorphisms for all $p \leq k$.
\end{remark}

\section{Hodge Rational Homology Manifold Level} \label{sect-Definition}
\subsection{Definition and basic properties}
As mentioned above, a pure $d$-dimensional variety $Z$ is a \emph{rational} (or $\mathbb{Q}$-)\emph{homology manifold}, or \emph{rationally smooth}, if the morphism
\[ \psi_Z \colon \QQ^H_Z[d] \to (\mathbf D_Z \QQ^H_Z[d])(-d)\]
is a quasi-isomorphism. This is equivalent to requiring the map on the underlying $\QQ$-structure to be a quasi-isomorphism. This section is devoted to defining and studying a natural weakening of this notion.

We observed above that $\QQ^H_Z[d] \in D^{\leq 0}({\rm MHM}(Z))$, and by duality this implies $\mathbf D_Z(\QQ^H_Z[d]) \in D^{\geq 0}({\rm MHM}(Z))$. The following elementary lemma allows us to understand the map $\psi_Z$.

\begin{lemma} Let $\cA$ be an abelian category and let $A\in D^{\leq 0}(\cA), B\in D^{\geq 0}(\cA)$ be objects in the derived category. Let $\psi \colon A \to B$ be a morphism.

Then $\psi$ is a quasi-isomorphism if and only if $\cohH^0 \psi \colon \cohH^0A \to \cohH^0B$ is an isomorphism in $\cA$ and $\cohH^{-i} A = \cohH^i B = 0$ for all $i >0$.
\end{lemma}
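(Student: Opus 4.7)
The plan is to reduce the statement to the definition of quasi-isomorphism, namely that $\cohH^i\psi$ is an isomorphism for every $i \in \ZZ$, and then to split the index into the three regimes $i > 0$, $i < 0$, and $i = 0$, using the truncation hypotheses $A \in D^{\leq 0}(\cA)$ and $B \in D^{\geq 0}(\cA)$ to rule out almost all indices.

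For the forward direction, assume $\psi$ is a quasi-isomorphism. Then $\cohH^0\psi$ is an isomorphism for free. For $i > 0$, the hypothesis $A \in D^{\leq 0}(\cA)$ gives $\cohH^i A = 0$, and combined with $\cohH^{-i} B = 0$ coming from $B \in D^{\geq 0}(\cA)$, the isomorphisms $\cohH^{-i}\psi \colon \cohH^{-i}A \to \cohH^{-i}B$ and $\cohH^i\psi \colon \cohH^iA \to \cohH^iB$ force the remaining vanishings $\cohH^{-i}A = 0$ and $\cohH^i B = 0$.

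For the backward direction, combine the hypothesized vanishings $\cohH^{-i}A = \cohH^i B = 0$ for $i > 0$ with the truncation hypotheses $\cohH^{i}A = 0$ for $i > 0$ (from $A \in D^{\leq 0}(\cA)$) and $\cohH^{-i}B = 0$ for $i > 0$ (from $B \in D^{\geq 0}(\cA)$). This shows $\cohH^i A = \cohH^i B = 0$ for every $i \neq 0$, so $\cohH^i \psi$ is trivially an isomorphism of zero objects for $i \neq 0$, while $\cohH^0\psi$ is an isomorphism by assumption; hence $\psi$ is a quasi-isomorphism.

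There is essentially no obstacle: the lemma is a pure bookkeeping statement about truncation, and the only thing to be careful about is matching the indices correctly so that both the $D^{\leq 0}$ and $D^{\geq 0}$ hypotheses are used in tandem with the newly assumed vanishings to conclude vanishing of \emph{both} $\cohH^i A$ and $\cohH^i B$ in each nonzero degree.
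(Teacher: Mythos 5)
Your proof is correct; the paper labels this lemma ``elementary'' and supplies no proof, and the degree-by-degree bookkeeping you give is the only natural argument. Nothing to add.
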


Recall that we have factored $\cohH^0 \psi_Z = \cohH^0 \gamma_Z^\vee \circ \cohH^0 \gamma_Z$, where
\[ \gamma_Z \colon \QQ_Z^H[d] \to {\rm IC}_Z^H, \quad \gamma_Z^\vee = \mathbf D_Z(\gamma_Z)(-d).\]

In particular, by duality, we have the relation
\[ \mathbf D_Z(\psi_Z) = \psi_Z (d).\]

\begin{proposition} \label{prop-GammaProperties} We have the following:
\begin{itemize}
    \item The map $\cohH^0 \gamma_Z$ is an isomorphism if and only if it is a monomorphism.
    \item The map $\cohH^0 \gamma_Z^\vee$ is an isomorphism if and only if it is an epimorphism.
\end{itemize} 
Either condition is equivalent to $\cohH^0 \psi_Z$ being an isomorphism.

Thus, the variety $Z$ is a rational homology manifold if and only if $\QQ_Z[d]$ is perverse and either $\cohH^0 \gamma_Z$ or $\cohH^0 \gamma_Z^\vee$ is an isomorphism.
\end{proposition}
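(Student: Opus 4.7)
The plan is to exploit the definitions of $\gamma_Z$ and $\gamma_Z^\vee$ together with the duality relation $\gamma_Z^\vee = \mathbf D_Z(\gamma_Z)(-d)$ to reduce the three conditions to a single one, and then to identify it with $\cohH^0 \psi_Z$ being an isomorphism via the factorization $\cohH^0 \psi_Z = \gamma_Z^\vee \circ \gamma_Z$.

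First I would observe that by construction $\gamma_Z$ is already an epimorphism, since it was defined as the quotient $\cohH^0(\QQ_Z^H[d]) \to {\rm Gr}^W_d \cohH^0(\QQ_Z^H[d]) = {\rm IC}_Z^H$ (recall that by the weight formalism one has $W_\ell \cohH^0(\QQ_Z^H[d]) = \cohH^0(\QQ_Z^H[d])$ for $\ell \geq d$). Hence $\gamma_Z$ is an isomorphism if and only if it is a monomorphism, which gives the first bullet. Dually, $\gamma_Z^\vee$ was defined as the inclusion of $W_d \cohH^0(\mathbf D_Z(\QQ_Z^H[d])(-d))$ into the full cohomology module and is therefore automatically a monomorphism, so it is an isomorphism if and only if it is an epimorphism, giving the second bullet.

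Next I would identify these two conditions with one another and with the invertibility of $\cohH^0 \psi_Z$. Because $\mathbf D_Z$ is an exact involution on $\MHM(Z)$ sending monos to epis and vice versa, and $\gamma_Z^\vee$ is up to Tate twist the dual of $\gamma_Z$, the statement "$\gamma_Z$ is mono" is equivalent to "$\gamma_Z^\vee$ is epi". Thus all four conditions "$\gamma_Z$ is an iso", "$\gamma_Z$ is mono", "$\gamma_Z^\vee$ is an iso", "$\gamma_Z^\vee$ is epi" are equivalent. Plugging the factorization $\cohH^0\psi_Z = \gamma_Z^\vee \circ \gamma_Z$ into this picture, with $\gamma_Z$ always epi and $\gamma_Z^\vee$ always mono, a direct diagram-chase shows $\cohH^0\psi_Z$ is an isomorphism iff $\gamma_Z^\vee$ is epi and $\gamma_Z$ is mono, i.e.\ iff the common condition above holds.

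For the final statement I would apply the elementary lemma just preceding the proposition to $\psi_Z \colon \QQ_Z^H[d] \to \mathbf D_Z(\QQ_Z^H[d])(-d)$: the morphism is a quasi-isomorphism exactly when $\cohH^0 \psi_Z$ is an iso in $\MHM(Z)$ and $\cohH^{-i}(\QQ_Z^H[d]) = \cohH^i(\mathbf D_Z(\QQ_Z^H[d])(-d)) = 0$ for $i>0$. Since $\mathbf D_Z$ swaps cohomological degrees on $D^b(\MHM(Z))$, these two vanishing conditions are equivalent, and together they say that $\QQ_Z^H[d]$ is concentrated in perverse degree $0$, i.e.\ that $\QQ_Z[d]$ is perverse. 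Combined with the equivalences of the previous paragraph, this yields the last claim. The only point that requires care, and which I would treat as the main obstacle, is keeping the duality conventions straight so that kernel/cokernel duality really delivers the equivalence between "$\gamma_Z$ mono" and "$\gamma_Z^\vee$ epi"; everything else is formal manipulation with the epi/mono data already built into the definitions of $\gamma_Z$ and $\gamma_Z^\vee$.
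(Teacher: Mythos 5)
Your proof is correct and follows essentially the same route as the paper: the paper labels the first three claims as "immediate" and only elaborates the final one via the preceding lemma, and your argument simply fills in those "immediate" steps using exactly the epi/mono observations and the duality relation $\gamma_Z^\vee = \mathbf D_Z(\gamma_Z)(-d)$ that the paper's preamble already establishes.
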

\begin{proof} The first three claims are immediate.

The condition that $\QQ_Z[d]$ is perverse is equivalent to saying that $\cohH^j (\QQ_Z^H[d]) = 0$ for all $j < 0$, and so the last claim follows by the previous lemma.
\end{proof}

We will now define the weakening of rational smoothness that is slightly different but equivalent to \definitionref{deff}, and compare its behavior to that when $Z$ is actually a rational homology manifold.

\begin{definition} Let $Z$ be a pure $d$-dimensional variety. We say $Z$ is a \emph{rational homology manifold to Hodge degree} $k$, or for short {\it $k$-Hodge rational homology} if the morphism
\[ {\rm Gr}^F_{-p}{\rm DR}_Z(\psi_Z) \colon {\rm Gr}^F_{-p}{\rm DR}_Z(\QQ^H_Z[d]) \to {\rm Gr}^F_{d-p}{\rm DR}_Z(\mathbf D_Z(\QQ^H_Z[d]))\]
is a quasi-isomorphism for all $p\leq k$. We set \[\HRH(Z)={\rm sup}\left\{k \in \ZZ_{\geq -1} \mid \textrm{$Z$ is a $k$-Hodge rational homology variety}\right\}\]
with $\HRH(Z)=-1$ if $Z$ is not $0$-Hodge rational homology. It admits a local version ${\rm HRH}_x(Z) = \max_{x\in U \subseteq Z} {\rm HRH}(U)$, where the maximum runs over Zariski open neighborhoods of $x$ in $Z$. 
\end{definition}

\begin{remark}\label{stark} By the discussion at the end of Section \ref{sect-Preliminaries}, this condition $\HRH(Z)\geq k$ is equivalent to having $\phi^p \colon \underline{\Omega}_Z^p \to \mathbb D_Z(\underline{\Omega}_Z^{d-p})[-d]$ be a quasi-isomorphism for all $p \leq k$. 

We will see in \remarkref{rmk-compareStark} that this notion is the same as the one studied in \cites{PPLefschetz,PSV}, where it is called condition $(D)_k$. 
\end{remark}

\begin{remark}
Let $f\colon\widetilde{Z}\to Z$ be a strong log resolution with reduced exceptional divisor $E$. Then $U:=Z\setminus Z_{\textrm{sing}}\cong \widetilde{Z}\setminus E$. Let $j\colon U\to Z$ and $j'\colon U\to \widetilde{Z}$ be the inclusions. The map $j_{!}\mathbb{Q}_U^H[d]\to \mathbb{Q}_Z^H[d]$ by duality yields \[{\bf D}_Z(\mathbb{Q}_Z^H[d])(-d)\to j_*\mathbb{Q}_U^H[d].\] The above fits into the commutative diagram:  
\begin{equation}
    \begin{tikzcd}
    \mathbb{Q}_Z^H[d]\arrow[d, "\psi_Z"]\arrow[r] & f_*\mathbb{Q}_{\widetilde{Z}}^H[d]\arrow[d]\\
    {\bf D}_Z(\mathbb{Q}_Z^H[d])(-d)\arrow[r] & j_*\mathbb{Q}_U^H[d]
\end{tikzcd}
\end{equation}
Applying ${\rm Gr}^F_{-k}{\rm DR}_Z$ and appropriate shifts, we obtain the commutative diagram:
\begin{equation}\label{pic1}
    \begin{tikzcd}
    \underline{\Omega}_Z^k\arrow[d, "\phi^k"]\arrow[r] & { R}f_*\Omega_{\widetilde{Z}}^k\arrow[d]\\
    {\mathbb D}_Z(\underline{\Omega}_Z^{d-k})[-d]\arrow[r] &{R}f_*{\Omega}_{\widetilde{Z}}^k(\log E)
\end{tikzcd}
\end{equation}
Note that the right vertical map is induced by the residue sequence on $\widetilde{Z}$ (it is an isomorphism for $k=0$). 
\end{remark}

To motivate the reader, we describe some useful properties of $k$-Hodge rational homology varieties that are proven in \cite{PPLefschetz} (thanks to the fact that $\HRH(Z)\geq k$ is equivalent to $Z$ satisfying $(D)_k$ in the sense of \cite{PPLefschetz}, see \remarkref{rmk-compareStark}): 

\begin{remark}\label{rmk-PPProperties}
Let $Z$ be a variety of pure dimension $d$. 
\begin{enumerate}
    \item $Z$ is a rational homology manifold if and only if $\HRH(Z)\geq k$ for all $k$ (i.e. $\HRH(Z)=+\infty$). 
    \item If $Z$ is quasi-projective with general hyperplane section $Z'$ and $\HRH(Z)\geq k$, then we also have $\HRH(Z') \geq k$.
    \item If $Z$ is normal and its singularities are pre-$k$-rational, then $\HRH(Z)\geq k$.
    \item Assume $\HRH(Z)\geq k$. 
    Then:
    \begin{itemize}
    \item[(a)] Its singularities are pre-$k$-Du Bois (resp.\ strict-$k$-Du Bois) if and only if they are pre-$k$-rational (resp.\ strict-$k$-rational).
    \item[(b)] $\phi^p$ is an isomorphism for $d-k-1\leq p\leq d$.
    \item[(c)] $\mathrm{codim}_Z(Z_{\textrm{nRS}})\geq 2k+3$ where $Z_{\textrm{nRS}}$ is the locus of $Z$ where it is not a rational homology manifold. In particular \[ \HRH(Z) < + \infty \text{ if and only if } \HRH(Z) \leq \frac{d-3}{2}.\]
    \item[(d)] $\mathrm{lcdef}(Z)\leq \max\left\{d-2k-3,0\right\}$ where $\textrm{lcdef}(Z)$ is the local cohomological defect of $Z$, given by
    \[ {\rm lcdef}(Z) = \max\{a \mid \cohH^{-a} \QQ_Z^H[d] \neq 0\}.\]
    \item[(e)] (Symmetry of Hodge-Du Bois numbers) The Hodge-Du Bois numbers $\underline{h}^{p,q}(Z):=\mathbb{H}^q(Z,\underline{\Omega}_Z^p)$ are partially equipped with the full symmetry:
    \[\underline{h}^{p,q}(Z)=\underline{h}^{q,p}(Z)=\underline{h}^{d-p,d-q}(Z)=\underline{h}^{d-q,d-p}(Z)\textrm{ for all $0\leq p\leq k,0\leq q\leq d$}.\]
\end{itemize}

\end{enumerate}    
\end{remark}

We end this subsection with some general remarks on the behavior of the invariant $\HRH(Z)$.

\begin{remark} \label{rmk-duality} By duality, we see that $\HRH(Z) \geq k$ if
\[ \mathbb D_Z({\rm Gr}^F_{-p} {\rm DR}_Z(\psi_Z)) = {\rm Gr}^F_{p} {\rm DR}_Z(\mathbf D(\psi_Z))\]
is a quasi-isomorphism for all $p\leq k$. As $\mathbf D_Z(\psi_Z) = \psi_Z(d)$, this is equivalent to having
\[ {\rm Gr}^F_{p-d} {\rm DR}_Z(\psi_Z)\]
be a quasi-isomorphism for all $p\leq k$.
\end{remark}


\subsection{Characterization via filtrations on local cohomology modules}\label{emb-sec}
The condition in \remarkref{rmk-duality} resembles the condition in \corollaryref{cor-technical}, though we cannot talk about $F_k \QQ^H_Z[d]$ without using a fixed local embedding. Indeed, the terms in these complexes are not $\shO_Z$-modules, and the morphisms are not $\shO_Z$-linear. To continue this discussion, we will assume $i\colon Z \hookrightarrow X$ is a closed embedding of $Z$ inside a smooth variety $X$. 

We focus on $i_* \psi_Z \colon i_* \QQ_Z^H[d] \to i_* \mathbf D_Z(\QQ_Z^H [d])(-d)$, which is a morphism of objects in $D^b({\rm MHM}(X))$. Then $\HRH(Z) \geq k$ if and only if ${\rm Gr}^F_{p-d} {\rm DR}_X(i_* \psi_Z)$ is a quasi-isomorphism for all $p\leq k$. By \corollaryref{cor-technical}, this is equivalent to $F_{k-d} i_* \psi_Z$ being a quasi-isomorphism. 

This condition is equivalent to $F_{k-d} \cohH^{-j}(i_*\QQ^H_Z[d]) = F_{k-d} \cohH^{j}(i_* \mathbf D_Z(\QQ^H_Z[d])(-d)) = 0$ for $j > 0$ and $F_{k-d} \cohH^0(i_* \psi_Z)$ being an isomorphism. From here, we can give the lower bound on $\HRH(Z)$ in terms of local cohomology in \theoremref{thm-characterize}\eqref{thmmain}. Recall that we are indexing our Hodge filtrations following the conventions for right $\Dmod$-modules.

\begin{remark}
In \cite{CDM}, it is shown that when $Z$ is a complete intersection, then 
\[F_{k-n}W_{n+c} \cohH^c_Z(\shO_X) = F_{k-n} \cohH^c_Z(\shO_X) = P_k\cohH^c_Z(\shO_X)
\] is equivalent to $Z$ having $k$-rational singularities. Here $P_k \cohH^c_Z(\shO_X)$ is the \emph{pole order filtration}, consisting of elements which are annihilated by $\mathcal I_Z^{k+1}$, where $\mathcal I_Z \subseteq \shO_X$ is the ideal sheaf defining $Z$ in $X$. The comparison with the pole order filtration is essentially the same as the map $\alpha_k$ comparing the K\"{a}hler differentials to the Du Bois complex.
\end{remark}

\begin{proof}[Proof of \theoremref{thm-characterize}\eqref{thmmain}] We have $d = n-c$ by definition of the codimension of $Z$ in $X$, and so we are interested in
\[ i_* \QQ_Z^H[n-c] \to i_* \mathbf D_Z(\QQ_Z^H[n-c])(c-n).\]

By definition, $\QQ_Z^H[n-c] = i^*(\QQ_X^H[n])[-c]$. Applying duality and using the fact that $X$ is smooth (so that $\QQ_X^H[n]$ is pure, polarizable of weight $n$), we get
\[ \mathbf D_Z(\QQ_Z^H[n-c]) = i^!(\mathbf D_X(\QQ_X^H[n]))[c] = i^!(\QQ_X^H[n](n))[c]\]
so that $i_*\psi_Z$ can be identified with the morphism
\[ i_*\psi_Z\colon i_*\QQ_Z^H[n-c] \to i_* i^!(\QQ_X^H[n])[c](c).\]

Using that $\cohH^j(i_*i^!(\QQ_X^H[n])) = \cohH^j_Z(\shO_X)$ as mixed Hodge modules, we see that
\[ F_{k-d} \cohH^j(i_* i^!(\QQ_X^H[n])(c)) = F_{k-d-c} \cohH^j_Z(\shO_X) = F_{k-n} \cohH^j_Z(\shO_X).\]

Thus, we see that $\HRH(Z) \geq k$ if and only if $F_{k-d} i_* \psi_Z$ is a quasi-isomorphism if and only if for all $j>0$ we have $F_{k-d}\cohH^{-j}(i_* \QQ_Z^H[n-c]) = F_{k-d}\cohH^{j}(i_*i^!(\QQ_X^H[n])(c)) = 0$ and $F_{k-d} \cohH^0(\psi_Z)$ is an isomorphism. As $F_{k-d}\cohH^0(\psi_Z) = F_{k-d} \cohH^0 \gamma_Z^\vee \circ F_{k-d} \cohH^0 \gamma_Z$, the last condition is equivalent to both $F_{k-d}\cohH^0 \gamma_Z^\vee$ and $F_{k-d}\cohH^0 \gamma_Z$ being isomorphisms by \propositionref{prop-GammaProperties}.

Note that $F_{k-d}\cohH^0 \gamma_Z^\vee$ is an isomorphism if and only if we have equality
\[ F_{k-n} W_{n+c} \cohH^c_Z(\shO_X) = F_{k-n} \cohH^c_Z(\shO_X),\]
and so we see that $\HRH(Z)\geq k$ implies the conditions in the theorem statement.

For the converse, we assume \[ F_{k-n} \cohH^{j}_Z(\shO_X) = 0\text{ for all } j > c, \quad F_{k-n} W_{n+c} \cohH^c_Z(\shO_X) = F_{k-n} \cohH^c_Z(\shO_X),\]
and we want to show $\HRH(Z) \geq k$.

For ease of notation, let $A = \QQ_Z^H[d]$ and let $B = \mathbf D(A)(-d)$. By the discussion at the beginning of the proof, to show that $\HRH(Z)\geq k$ it suffices under our assumption to show that $F_{k-d} \cohH^{-j}(A) = 0$ for all $j > 0$ and that $F_{k-d}\cohH^0 \gamma_Z$ is an isomorphism. For this, we follow the argument of \cite{CDM}*{Lem. 3.5}. To prove $F_{k-d} \cohH^{-j}(A) = 0$ it suffices to prove for all $\ell \in \ZZ$ that $F_{k-d} {\rm Gr}^W_\ell \cohH^{-j}(A) = 0$. To prove that $F_{k-d}\cohH^0 \gamma_Z$ is an isomorphism it suffices to prove that $F_{k-d} {\rm Gr}^W_{\ell}(\cohH^0(A)) = 0$ for all $\ell < d$.

Our assumption is equivalent to $F_{k-d} \cohH^j(B) = 0$ for all $j > 0$ and that $F_{k-d} {\rm Gr}^W_{\ell} \cohH^0(B) =0$ for all $\ell > d$.

By polarizability of the pure Hodge module ${\rm Gr}^W_\ell \cohH^j(B)$, we have an isomorphism
\[ \mathbf D({\rm Gr}^W_\ell \cohH^j B) \cong ({\rm Gr}^W_{\ell} \cohH^j B)(\ell).\] But we also have
\[ \mathbf D({\rm Gr}^W_\ell \cohH^j B) \cong {\rm Gr}^W_{-\ell} \cohH^{-j} (\mathbf D(B)).\]

We have that $\mathbf D(B) \cong A(d)$, and so if we apply $F_p$ to this isomorphism (keeping in mind the Tate twists), we have
\[ F_{p-\ell} {\rm Gr}^W_\ell \cohH^j B \cong F_{p-d} {\rm Gr}^W_{2d - \ell} \cohH^{-j} A.\]

By the weight formalism, we know that ${\rm Gr}^W_{\ell} \cohH^j (B) \neq 0$ implies $\ell \geq d + j$ (and the same inequality is implied when ${\rm Gr}^W_{2d-\ell} \cohH^{-j}(A) \neq 0$). For $j > 0$, we trivially have $\ell > d$, and for $j = 0$, we only need to consider $\ell > d$. Plugging in $p = k-d +\ell$, we get
\[ 0 = F_{k-d} {\rm Gr}^W_{\ell} \cohH^j(B) = F_{k-2d+\ell} ({\rm Gr}^W_{2d-\ell}\cohH^{-j}(A)),\]
and so because $W$ is exhaustive (for the $j>0$ case) and $\ell >d$, this gives the desired vanishing.
\end{proof}

\begin{remark} The theorem is essentially saying that $\HRH(Z)$ is controlled by the morphism $F_{k-d}i_* \gamma_Z^\vee$.

Although duality is used in the argument, it is important to note that it is not equivalent to study $F_{k-d}i_*\gamma_Z$. Indeed, in the non-rational homology manifold hypersurface case, we will see below that $F_{k-d}i_*\gamma_Z$ can be an isomorphism when $F_{k-d} i_*\gamma_Z^\vee$ is not.
\end{remark}

\begin{remark} \label{rmk-compareStark} At this point, we can see that $\HRH(Z) \geq k$ if and only if $Z$ satisfies the condition $D_k$ of \cites{PPLefschetz,PSV}. Indeed, as both notions are local, we can assume $i\colon Z \to X$ is a closed embedding into a smooth variety $X$. Then by the argument above, we have
\[ \HRH(Z) \geq k \text{ if and only if } F_{k-d} i_* \gamma_Z^\vee \text{ is a quasi-isomorphism},\]
which is true if and only if
\[{\rm Gr}^F_{p-d} {\rm DR}_X(i_* \gamma_Z^\vee) \text{ is a quasi-isomorphism for all } p \leq k.\]

By duality, this is equivalent to
\[ {\rm Gr}^F_{d-p} {\rm DR}_X(i_* \mathbf D(\gamma_Z^\vee)) \text{ being a quasi-isomorphism for all } p \leq k,\]
and finally, using that $\mathbf D(\gamma_Z^\vee) = \gamma_Z(d)$, we have that this is equivalent to the natural map
\[ {\rm Gr}^F_{-p} {\rm DR}_X(i_* \QQ_Z^H[d]) \to {\rm Gr}^F_{-p} {\rm DR}_X(i_* {\rm IC}_Z^H) \text{ being a quasi-isomorphism for all } p\leq k,\]
which is the condition $D_k$.
\end{remark}

\begin{proof}[Proof of \corollaryref{cor-DANVanishing}]
Using Kodaira-Saito vanishing \cite{SaitoMHM}*{Prop. 2.33} for the Hodge module ${\rm IC}_Z^H$ we obtain 
\[ \mathbb H^q(Z, {\rm I}\underline{\Omega}_Z^p \otimes_{\shO_Z} L^{-1}) = 0 \text{ for all } p+q < d.\]
The conclusion follows immediately from the above remark.
\end{proof}

\begin{proof}[Proof of \corollaryref{cor-rational}] Assume $Z$ is a Cohen-Macaulay subvariety of $X$ of pure codimension $c$.

Recall the notation of the corollary statement: the filtration $E_\bullet \cohH^q_Z(\shO_X)$ is defined by
\[ E_\bullet \cohH^q_Z(\shO_X) = {\rm Im}\left[\mathcal E xt^q(\shO_X/I_Z^{\bullet+1},\shO_X)\to \cohH^q_Z(\shO_X)\right],\]
where $I_Z\subseteq \shO_X$ is the ideal sheaf defining $Z$ in $X$.

Under the Cohen-Macaulay assumption, we get $F_{-n}\cohH^q_Z(\shO_X) = 0$ for $q >c$. Moreover, the result of \cite{MustataPopaDuBois}*{Thm. C} says that, under the Cohen-Macaulay assumption, $Z$ is Du Bois if and only if it satisfies $F_{-n}\cohH^c_Z(\shO_X)= E_0 \cohH^c_Z(\shO_X)$. 

Thus, either assumption in the corollary statement implies that $Z$ is Du Bois, so we can assume $Z$ is Du Bois. Then $Z$ has rational singularities if and only if $\HRH(Z) \geq 0$, and so the claim follows from the previous theorem.
\end{proof}

Let $D$ be a hypersurface inside a smooth variety $X$. The notion of Hodge ideals $I_p(D)$ was introduced in \cites{HI}, and subsequently their weighted variants $I_p^{W_\ell}(D)$ were introduced in \cites{WHI}. It is interesting to note that $\HRH(D)$ can be detected through the weighted Hodge ideals:

\begin{corollary}\label{cor-WHI} Let $D$ be a hypersurface inside a smooth variety $X$ of dimension $n$. Then $\HRH(D)\geq k$ if and only if $I_p^{W_1}(D)=I_p(D)$ for all $0\leq p\leq k$. 
\end{corollary}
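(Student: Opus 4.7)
The plan is to specialize \theoremref{thmmain} to the hypersurface case and then translate the resulting filtration equality into the language of (weighted) Hodge ideals. Since $D \subseteq X$ is a hypersurface, we have $\cohH^j_D(\shO_X) = 0$ for all $j > 1$, so the first condition in \theoremref{thmmain} (vanishing for $j > c = 1$) is automatic. With $c = 1$ and $d = n-1$, the theorem reduces to the single statement
\[\HRH(D) \geq k \iff F_{k-n}\cohH^1_D(\shO_X) = F_{k-n}W_{n+1}\cohH^1_D(\shO_X).\]

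Next, I would invoke the defining formulas from \cite{HI} and \cite{WHI}: in the paper's right-$\Dmod$-module indexing, one has
\[F_{p-n}\shO_X(*D) = \shO_X((p+1)D)\otimes I_p(D),\qquad F_{p-n}W_{n+1}\shO_X(*D) = \shO_X((p+1)D)\otimes I_p^{W_1}(D)\]
for every $p \geq 0$. Combining these with the short exact sequence $0 \to \shO_X \to \shO_X(*D) \to \cohH^1_D(\shO_X) \to 0$ (noting that $F_{p-n}\shO_X \subseteq F_{p-n}W_{n+1}\shO_X$ trivially since $\shO_X$ is pure), one obtains
\[F_{p-n}\cohH^1_D(\shO_X) = F_{p-n}W_{n+1}\cohH^1_D(\shO_X) \iff I_p(D) = I_p^{W_1}(D).\]

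Finally, I would promote the single equality at level $k$ to the family of equalities at levels $0,1,\dots,k$ using the increasing nature of $F_\bullet$. Indeed, the equation $F_{k-n}\cohH^1_D(\shO_X) = F_{k-n}W_{n+1}\cohH^1_D(\shO_X)$ is equivalent to the inclusion $F_{k-n}\cohH^1_D(\shO_X) \subseteq W_{n+1}\cohH^1_D(\shO_X)$; since $F_{p-n} \subseteq F_{k-n}$ for $p \leq k$, the same inclusion holds at level $p$, giving $F_{p-n} = F_{p-n}W_{n+1}$ on $\cohH^1_D(\shO_X)$ for all $0 \leq p \leq k$. The converse direction is immediate by specializing to $p = k$. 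Putting the three steps together establishes the biconditional.

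The proof itself is essentially a bookkeeping exercise once \theoremref{thmmain} is in hand; the only delicate point to verify is the compatibility between the right-$\Dmod$-module indexing convention used here and the conventions in \cite{HI} and \cite{WHI}, which amounts to the uniform shift $F_p^{L} = F_{p-n}^{R}$. Once this shift is pinned down, the rest of the argument is formal.
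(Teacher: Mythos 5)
Your proof is correct and follows essentially the same route as the paper's: both specialize \theoremref{thmmain} to the hypersurface case $c=1$ and then translate the filtration equality on $\cohH^1_D(\shO_X)$ into the language of (weighted) Hodge ideals. The only cosmetic difference is that you unpack the equivalence $F_{p-n}\cohH^1_D(\shO_X) = F_{p-n}W_{n+1}\cohH^1_D(\shO_X) \iff I_p(D)=I_p^{W_1}(D)$ directly from the defining formulas and the exact sequence $0\to\shO_X\to\shO_X(*D)\to\cohH^1_D(\shO_X)\to 0$, whereas the paper cites the short exact sequence (6.1) from \cite{WHI} which packages the same information via the graded pieces ${\rm Gr}^W_{n+l}\shO_X(*D)$.
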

\begin{proof}
Observe that $\HRH(D) \geq k$ if and only if $F_{p-n}{\rm Gr}^W_{n+\ell} \cohH^{1}_D(\shO_X)=0$ for $0\leq p\leq k$, $\ell \geq 2$ by \theoremref{thm-characterize}\eqref{thmmain}. The assertion is an immediate consequence of the exact sequence
\[0\to I_p^{W_{\ell-1}}(D)\otimes \shO_X((p+1)D)\to I_p^{W_{\ell}}(D)\otimes \shO_X((p+1)D)\to F_{p-n}{\rm Gr}^W_{n+\ell} \shO_X(*D)\to 0\]
given by \cite{WHI}*{(6.1)}, and the fact $\cohH^{1}_D(\shO_X)=\shO_X(*D)/\shO_X$.
\end{proof}

In the case of isolated singularities, the connection between this value and the lack of Poincar\'{e} duality of $D$ was noted in \cite{WHI}*{Remark 6.8}.

We can also now show how the invariant ${\rm HRH}(-)$ behaves under Cartesian product.

Recall the notation $p(A,F) = \min\{p\mid F_p A \neq 0\}$ for any $(A,F)$ where $F_\bullet A$ is a bounded below filtration.

\begin{lemma} \label{lem-ProductPartialSmooth} Let $Z_1,Z_2$ be two pure dimensional complex algebraic varieties. Then
\[\HRH(Z_1 \times Z_2) = \min\{ \HRH(Z_1),\HRH(Z_2)\}.\]
\end{lemma}

Before beginning the proof, we recall the structure of the external product for (complexes of) mixed Hodge modules.

 For any two mixed Hodge modules $M_i$ on smooth varieties $X_i$, the underlying filtered $\Dmod$-module of $M_1\boxtimes M_2$ is $\cM_1\boxtimes \cM_2$ with convolution Hodge filtration
\[ F_p (\cM_1 \boxtimes \cM_2) = \sum_{i+j = p} F_i \cM_1 \boxtimes F_j \cM_2,\]
and so
\[{\rm Gr}^F_p (\cM_1 \boxtimes \cM_2) = \bigoplus_{i+j = p} {\rm Gr}^F_i \cM_1 \boxtimes {\rm Gr}^F_j \cM_2.\]

Given two complexes $M_i^\bullet \in D^b({\rm MHM}(X_i))$, we have
\[ \cohH^k( M_1^\bullet \boxtimes M_2^\bullet) = \bigoplus_{i+j = k} \cohH^i M_1^\bullet \boxtimes \cohH^j M_2^\bullet.\]

Thus, we have
\[ {\rm Gr}^F_p \cohH^k(\cM_1^\bullet \boxtimes \cM_2^\bullet) = \bigoplus_{a+b=p} \bigoplus_{i+j = k} {\rm Gr}^F_a \cohH^i(\cM_1^\bullet) \boxtimes {\rm Gr}^F_b \cohH^j(\cM_2^\bullet).\]

Now, for $Z_1,Z_2$ not necessarily smooth, consider the exact triangle in $D^b({\rm MHM}(Z_i))$:
\[ \QQ_{Z_i}^H[d_i] \to {\rm IC}_{Z_i}^H \to \cK_{Z_i}^\bullet[1] \xrightarrow[]{+1}.\]

It is easy to see that we have
\[ \QQ_{Z_1}^H[d_1] \boxtimes \QQ_{Z_2}^H[d_2] \cong \QQ_{Z_1\times Z_2}^H[d_1+d_2]\]
\[ {\rm IC}_{Z_1}^H \boxtimes {\rm IC}_{Z_2}^H \cong {\rm IC}_{Z_1 \times Z_2}^H.\]

We make use of the following easy lemma, to relate the RHM defect objects.

\begin{lemma} \label{lem-octahedral} For $i = 1,2$, let $A_i \xrightarrow[]{\alpha_i} B_i \xrightarrow[]{\beta_i} C_i \xrightarrow[]{+1}$ be an exact triangle. Let $S = {\rm cone}(\alpha_1 \boxtimes \alpha_2)$. Then there are exact triangles
\[ A_1 \boxtimes C_2 \to S \to C_1 \boxtimes B_2 \xrightarrow[]{+1}\]
\[ C_1 \boxtimes A_2 \to S \to B_1 \boxtimes C_2 \xrightarrow[]{+1},\]
such that the compositions
\[ A_1 \boxtimes C_2 \to S \to B_1 \boxtimes C_2,\]
\[ C_1 \boxtimes A_2 \to S \to C_1 \boxtimes B_2\]
can be identified with $\alpha_1 \boxtimes {\rm id}_{C_2}$ and ${\rm id}_{C_1} \boxtimes \alpha_2$, respectively.
\end{lemma}
\begin{proof} This follows from the octahedral axiom and the exactness of the functor $ - \boxtimes M$ for any object $M$.
\end{proof}
\begin{proof}[Proof of \lemmaref{lem-ProductPartialSmooth}]
By the local nature of the claim, we can assume without loss of generality that both $Z_i$ are embeddable.

We consider the triangles
\[ \QQ_{Z_1}^H[d_1] \xrightarrow[]{\chi_1} {\rm IC}_{Z_1}^H \to \cK_{Z_1}^\bullet[1] \xrightarrow[]{+1},\]
\[ \QQ_{Z_2}^H[d_2] \xrightarrow[]{\chi_2} {\rm IC}_{Z_2}^H \to \cK_{Z_2}^\bullet[1] \xrightarrow[]{+1},\]
\[ \QQ_{Z_1 \times Z_2}^H[d_1+d_2] \xrightarrow[]{\chi_{12}} {\rm IC}_{Z_1 \times Z_2}^H \to \cK_{Z_1 \times Z_2}^\bullet[1] \xrightarrow[]{+1}.\]

By uniqueness of the $\chi$-morphisms (using an argument similar to \cite{SaitoMHM}*{(4.5.14)}), we get $\chi_1 \boxtimes \chi_2 = \chi_{12}$ up to non-zero scalar multiples on the connected components, so that we can identify
\[ {\rm cone}(\chi_1 \boxtimes \chi_2) \cong \cK_{Z_1\times Z_2}^\bullet[1].\]

By \lemmaref{lem-octahedral}, we have (after shifting by $[1]$) exact triangles
\[ \QQ_{Z_1}^H[d_1] \boxtimes \cK_{Z_2}^\bullet \to \cK_{Z_1\times Z_2}^\bullet \to \cK_{Z_1}^\bullet \boxtimes {\rm IC}_{Z_2}^H \xrightarrow[]{+1},\]

\[ \cK_{Z_1}^\bullet \boxtimes \QQ_{Z_2}^H[d_2] \to \cK_{Z_1\times Z_2}^\bullet \to {\rm IC}_{Z_1}^H \boxtimes \cK_{Z_2}^\bullet \xrightarrow[]{+1}.\]

Now, apply to these triangles the exact functor ${\rm Gr}^F_{-p} {\rm DR}(-)$. Using that $Z_1$ and $Z_2$ are both embeddable, we have the quasi-isomorphisms
\begin{equation} \label{eq-BoxTimesDecomposition} {\rm Gr}^F_{-p}{\rm DR}(M^\bullet \boxtimes N^\bullet) \cong \bigoplus_{i+j = p} {\rm Gr}^F_{-i} {\rm DR}(M^\bullet) \boxtimes {\rm Gr}^F_{-j} {\rm DR}(N^\bullet),\end{equation}
for any $M_i^\bullet \in D^b({\rm MHM}(Z_i))$, see \cite{LankVenkatesh}*{(B.1)}.

Let $\delta_i = {\rm HRH}(Z_i)$. We want to prove that ${\rm Gr}^F_{-p} {\rm DR}(\cK_{Z_1\times Z_2}^\bullet) = 0$ if and only if $p \leq \min\{\delta_1,\delta_2\}$. Without loss of generality, assume this minimum is equal to $\delta_1$.

First, assume $p\leq \delta_1$. Then we have
\[ {\rm Gr}^F_{-p} {\rm DR}(\QQ_{Z_1}^H[d_1] \boxtimes \cK_{Z_2}^\bullet) = \bigoplus_{i+j=p} {\rm Gr}^F_{-i} {\rm DR}(\QQ_{Z_1}^H[d_1]) \boxtimes {\rm Gr}^F_{-j} {\rm DR}(\cK_{Z_2}^\bullet).\]

Using that the first term is non-zero if and only if $i \geq 0$, we have the inequality $j \leq i+j \leq p \leq \delta_1 \leq \delta_2$, and so the second factor on the right is zero for any such $j$. Similarly, we conclude that ${\rm Gr}^F_{-p} {\rm DR}(\cK_{Z_1}^\bullet \boxtimes {\rm IC}_{Z_2}^H) = 0$, hence ${\rm Gr}^F_{-p} {\rm DR}(\cK_{Z_1\times Z_2}^\bullet) = 0$, as claimed.

Conversely, assume we have vanishing ${\rm Gr}^F_{-p} {\rm DR}(\cK_{Z_1\times Z_2}^\bullet) = 0$. We want to prove that $p \leq \delta_1$. The composition
\[ \cK_{Z_1}^\bullet \boxtimes \QQ_{Z_2}^H[d_2] \to \cK_{Z_1\times Z_2}^\bullet \to \cK_{Z_1}^\bullet \boxtimes  {\rm IC}_{Z_2}^H\]
is identified with ${\rm id}_{\cK_{Z_1}^\bullet} \boxtimes \chi_2$. When we apply ${\rm Gr}^F_{-p} {\rm DR}(-)$, this composition is zero, because it factors through ${\rm Gr}^F_{-p}{\rm DR}(\cK_{Z_1\times Z_2}^\bullet) = 0$.

Decomposing along the direct sum \eqref{eq-BoxTimesDecomposition} and using the fact that the morphism
\[ {\rm Gr}^F_{-j}{\rm DR}(\QQ_{Z_2}^H[d_2]) \to {\rm Gr}^F_{-j}{\rm DR}({\rm IC}_{Z_2}^H)\]
is never zero (it restricts to the identity on the smooth locus of $Z_2$), we conclude that
\[ {\rm Gr}^F_{-i} {\rm DR}(\cK_{Z_1}^\bullet) = 0\]
for all $i \leq p$, which proves $p \leq \delta_1 = {\rm HRH}(Z_1)$, as claimed.
\end{proof}

For example, if $Z_2$ is a rational homology manifold, then $\HRH(Z_1\times Z_2) = \HRH(Z_1)$. Even for smooth morphisms which are not projections from a product, we will see in \lemmaref{lem-kPRSSmoothPullback} that $\HRH$ is preserved.

\subsection{Characterization via local cohomology at a point and links}\label{link-sec}
We can now relate $\HRH(Z)$ to the local cohomology at a point $x\in Z$. Using the results of \cite{DurfeeSaito}, we relate $\HRH(Z)$ to the mixed Hodge structure on the cohomology of the link.

Let $S^\bullet= {\rm cone}(\psi_Z)$, where $\psi_Z\colon \QQ^H_Z[d] \to \mathbf D_Z(\QQ^H_Z[d])(-d)$. If $Z$ is embeddable into a smooth variety $X$, then we saw above that $\HRH(Z)\geq k$ if and only if $F_{k-d}\psi_Z$ is a quasi-isomorphism, which holds (by strictness of morphisms with respect to the Hodge filtration) if and only if $F_{k-d}S^\bullet = 0$. 

Let $i_x \colon \{x\} \to Z$ be the inclusion of a point. The local cohomology of $Z$ at $x$ is given by $\cohH^k(i_x^!\QQ^H_Z)$ and denoted $H^k_{\{x\}}(Z)$. For any neighborhood $x\in V\subseteq Z$, if $i_{x,V} \colon \{x\} \to V$ is the inclusion, then $i_x^! \QQ^H_Z = i_{x,V}^! \QQ^H_V$. Thus, as far as local cohomology is concerned, we can replace $Z$ with any open neighborhood of $x$. In particular, we can choose an affine neighborhood, so that we can assume $Z$ is embeddable into a smooth variety. We will need the following lemma:

\begin{lemma} \label{lem-pullbackHodge} Let $\phi \colon M^\bullet \to N^\bullet$ be a morphism in $D^b({\rm MHM}(X))$ where $X$ is a smooth algebraic variety. Let $i\colon Y\to X$ be a closed embedding. Assume $F_k \phi$ is a quasi-isomorphism. Then $F_k i_*i^!(\phi), F_k i_* i^*(\phi)$ are quasi-isomorphisms. If $Y$ is a smooth subvariety, then $F_k i^!(\phi),F_k i^*(\phi)$ are quasi-isomorphisms.
\end{lemma}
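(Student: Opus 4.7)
The first step is to pass to the cone. Set $C^\bullet := \mathrm{cone}(\phi) \in D^b(\MHM(X))$. Since morphisms of mixed Hodge modules are bistrict on each cohomology, $F_k \phi$ is a quasi-isomorphism if and only if $F_k \cohH^j C^\bullet = 0$ for all $j$. As $i_*,\, i^*,\, i^!$ are triangulated and $i_*$ is exact on $\MHM(Y)$, the cones of $i_* i^!(\phi),\, i_* i^*(\phi),\, i^!(\phi),\, i^*(\phi)$ are obtained by applying the same functor to $C^\bullet$; it thus suffices to show that if $D^\bullet \in D^b(\MHM(X))$ satisfies $F_k \cohH^j D^\bullet = 0$ for all $j$, then the analogous vanishing holds for each of $i_* i^! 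D^\bullet$, $i_* i^* D^\bullet$, and, when $Y$ is smooth, for $i^! D^\bullet$ and $i^* D^\bullet$. A standard spectral sequence argument in the spirit of \lemmaref{lem-technical}, combined with strictness, reduces this further to the case where $D^\bullet$ is a single mixed Hodge module $M$.

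Assume first that $Y$ is smooth. Working locally, we may write $Y = V(t_1,\dots,t_c)$ with $t_1,\dots,t_c$ part of a coordinate system on $X$. Then \propositionref{prop-Restriction} provides filtered isomorphisms
\[F_p \cohH^j i^!(\cM) \cong F_p \cohH^j B^0(\cM), \qquad F_p \cohH^j i^*(\cM) \cong F_p \cohH^j C^0(\cM),\]
where $B^0,\, C^0$ are the Koszul-type complexes assembled from the Hodge pieces $F_p \Gr_V^\alpha(\cM)$. The assumption $F_k \cM = 0$ forces $F_k V^\alpha \cM \subseteq F_k \cM = 0$, hence $F_k \Gr_V^\alpha \cM = 0$ for every $\alpha$; the complexes $F_k B^0(\cM)$ and $F_k C^0(\cM)$ therefore vanish identically, delivering the claims for $i^!$ and $i^*$.

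For general, possibly singular, $Y$, work locally and write $Y = V(f_1,\dots,f_r) \subset X$. Introduce the graph embedding $\Gamma\colon X \to T := X \times \mathbb{A}^r_z$, $x\mapsto(x,f_1(x),\dots,f_r(x))$, and the zero section $\sigma_0\colon X\to T$; they fit into a Cartesian square
\[\begin{tikzcd} Y \arrow[r, "i"] \arrow[d, "i"'] & X \arrow[d, "\Gamma"] \\ X \arrow[r, "\sigma_0"'] & T \end{tikzcd}\]
whose base change (\exampleref{eg-BaseChange}), together with $(-)_! = (-)_*$ for closed embeddings, yields $i_* i^! = \Gamma^! \sigma_{0,*}$ and $i_* i^* = \Gamma^* \sigma_{0,*}$. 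Both $\sigma_0$ and $\Gamma$ are closed embeddings of smooth varieties (for $\Gamma$, the change of variables $z_i \mapsto z_i - f_i(x)$ makes it a zero section). Combining \propositionref{prop-GrDRProps} with \lemmaref{lem-technical} applied to the pushforward $\sigma_{0,*}$ shows that $F_k \sigma_{0,*} M = 0$, and the smooth case just treated, now applied to $\Gamma$, then gives the required vanishings for $i_* i^! M = \Gamma^! \sigma_{0,*} M$ and $i_* i^* M = \Gamma^* \sigma_{0,*} M$. The main technical input driving the whole argument is the compatibility of the Hodge and $V$-filtrations encoded in \propositionref{prop-Restriction}; the rest is bookkeeping with the standard functorial properties of mixed Hodge modules, and the main potential obstacle is simply making sure the spectral sequence step genuinely reduces the complex case to that of a single mixed Hodge module, which is guaranteed by the strictness already used in the proof of \lemmaref{lem-technical}.
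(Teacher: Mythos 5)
Your proof is correct and follows essentially the same approach as the paper: reduce to the cone and then to a single mixed Hodge module, settle the smooth case via \propositionref{prop-Restriction}, and reduce the singular case to the smooth one using the graph embedding and Base Change (\exampleref{eg-BaseChange}). The only cosmetic differences are that you use the transposed Cartesian square (pushing forward along $\sigma_0$ and restricting along $\Gamma$, rather than pushing along $\Gamma$ and restricting along $\sigma$ as the paper does), and that you verify $F_k\sigma_{0,*}M=0$ via \propositionref{prop-GrDRProps} and \lemmaref{lem-technical} instead of citing the explicit formula for the Hodge filtration of a closed pushforward.
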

\begin{proof}  It suffices to prove the following: let $C^\bullet \in D^b({\rm MHM}(X))$ be such that $F_k C^\bullet$ is acyclic. Then $F_k i_* i^*(C^\bullet)$ and $F_k i_*i^!(C^\bullet)$ are acyclic. Indeed, this implies the lemma by applying this claim to the cone of $\phi$ and using that $i_* i^!,i_*i^*$ are exact functors between triangulated categories.

Again, using that $i_* i^!$ and $i_*i^*$ are exact functors, this reduces to the claim when $C$ is a single mixed Hodge module. Indeed, assume $C^\bullet \in D^{[a,b]}({\rm MHM}(X))$ and we use induction on $b-a$. We have the natural exact triangle
\[ \tau_{\leq b-1}C^\bullet \to C^\bullet \to \cohH^b(C^\bullet)[-b] \xrightarrow[]{+1},\]
where, by assumption, $F_k$ applied to any term in the triangle is acyclic. Induction handles the outer two terms. So we can assume $C^\bullet = C \in {\rm MHM}(X)$.

As this is a local statement, we can assume $Y= V(f_1,\dots, f_r)\subseteq X$ for some $f_1,\dots, f_r \in \shO_X(X)$.

Let $\Gamma \colon X\to X\times \mathbb A^r_t$ be the graph embedding along $f_1,\dots, f_r$. If $\sigma\colon X\times \{0\}\to  X\times\mathbb A^r_t$ is the inclusion of the zero section, then by Base Change (\exampleref{eg-BaseChange}) we have isomorphisms $\sigma^*\Gamma_* \cong i_*i^*, \sigma^!\Gamma_* \cong i_*i^!$. Moreover, we know that $F_k \Gamma_*(C) = 0$ by definition of the direct image for mixed Hodge modules (recall that we index like right $\Dmod$-modules). Thus, we have reduced to the case that $Y\subseteq X$ is a smooth subvariety defined by $t_1,\dots, t_r$. The claim then follows by \cite{CD}*{Thm. 1.2} (or \cite{CDS}*{Thm. 1}).
\end{proof}

Now we can prove the connection with the local description of being a rational homology manifold using local cohomology at $x\in Z$. We note that this invariant is related to the question of whether $H^{2d}_{\{x\}}(Z)$ is one dimensional, which by a result of Brion \cite{Brion}*{Prop. A1} is equivalent to $Z$ being irreducible near $x$. 

First, we prove a lemma which strengthens \cite{PPLefschetz}*{Prop. 6.4}. It is simply a corollary of \cite{PPLefschetz}*{Cor. 7.5}. We have the following exact triangles considering the RHM defect object and its dual:
\[ K_Z^\bullet \to \QQ_Z^H[d] \xrightarrow[]{\gamma_Z} {\rm IC}_Z^H \xrightarrow[]{+1},\]
\[ {\rm IC}_Z^H \xrightarrow[]{\gamma_Z^\vee} \mathbf D_Z(\QQ_Z^H[d])(-d) \to \mathbf D_Z(K_Z^\bullet)(-d) \xrightarrow[]{+1},\]
\[ \QQ_Z^H[d] \xrightarrow[]{\psi_Z}  \mathbf D_Z(\QQ_Z^H[d])(-d) \to S^\bullet \xrightarrow[]{+1}.\]

The octahedral axiom gives an exact triangle
\[ K_Z^\bullet[1] \to S^\bullet \to \mathbf D_Z(K_Z^\bullet)(-d) \xrightarrow[]{+1},\]
and so because the first object has non-zero cohomology only in strictly negative degrees and the third object only has non-zero cohomology in non-negative degrees, we have isomorphisms
\[ \cohH^{-i}K_Z^\bullet \cong \cohH^{-i-1}S^\bullet \text{ for all } i > 0, \quad \cohH^i \mathbf D_Z(K_Z^\bullet)(-d) \cong \cohH^i S^\bullet \text{ for all } i\geq 0.\]

\begin{lemma} \label{lem-SupportDimBound} We have the following inequality:
\[ \dim {\rm Supp}(\cohH^{-i} K_Z^\bullet) = \dim {\rm Supp}(\cohH^i \mathbf D_Z(K_Z^\bullet)(-d)) \leq d - 2{\rm HRH}(Z)-3 - i.\]

In particular, $\dim {\rm Supp}(\cohH^i S^\bullet) \leq d- 2\HRH(Z) -3 -i$.
\end{lemma}
\begin{proof} For the first inequality, the equality on the left holds by duality. We prove the first inequality for $K_Z^\bullet$. For ease of notation let $k= \HRH(Z)$.

First of all, \cite{PPLefschetz}*{Cor. 7.5(ii)} shows that
\[ \cohH^{2k+2-d} K_Z^\bullet = 0.\]

The claim is local, so we can assume $Z$ is quasi-projective. Note that ${\rm HRH}(Z)$ does not decrease under restriction to a general hyperplane $L$. Using that $\iota^* K_Z^\bullet = K_{Z\cap L}^\bullet[1]$ as shown in \cite{PPLefschetz}*{Lem. 6.6}, the argument of \cite{PPLefschetz}*{Prop. 6.4} gives the first claim.

The dimension bound on the cohomology of $S$ follows when $i \geq 0$ using $\cohH^i \mathbf D_Z(K_Z^\bullet)(-d) \cong \cohH^i S^\bullet \text{ for all } i\geq 0$. For the negative cohomologies, the desired bound is
\[ \dim {\rm Supp}(\cohH^{-i}S^\bullet) \leq d - 2\HRH(Z) -3 +i,\]
and so follows from the isomorphism
\[\cohH^{-i}S^\bullet \cong \cohH^{-(i-1)}K_Z^\bullet,\]
which by the first inequality has dimension less than or equal to
\[d - 2\HRH(Z) -3 -(i-1) \leq d-2\HRH(Z) -3 +i,\] using that $i> 0$.
\end{proof}

Recall that the local cohomological defect is given by
\[ {\rm lcdef}(Z) = \max\{a \mid \cohH^{-a}(\QQ^H_Z[d]) \neq 0\}.\] 
It admits a local version ${\rm lcdef}_x(Z) = \min_{x\in U \subseteq Z} {\rm lcdef}(U)$, where the minimum runs over Zariski open neighborhoods of $x$ in $Z$.

\begin{remark} \label{rmk-MPUpperBound} By the Hartshorne-Lichtenbaum Theorem \cite{Hartshorne}*{Thm. 3.1} \cite{Ogus}*{Cor. 2.10}, which has also been recovered by Musta\c{t}\u{a}-Popa in \cite{MustataPopaDuBois}*{Cor. 11.9}, we have the following bound for the local cohomological defect:
\[ {\rm lcdef}(Z) \leq d-1 \text{ if and only if no irreducible component of }Z \text{ is a point.}\]
As we are working with $Z$ purely $d$-dimensional with $d >0$, this is automatic for us.
\end{remark}

We begin with the following observation in the isolated singularities setting, which is a warm-up to the proof of \theoremref{thm-TopLocCoh}:
\begin{lemma} \label{lem-Irreducible} Let $x\in Z$ be such that $Z\setminus \{x\}$ is a rational homology manifold. Then $Z$ is irreducible near $x$ if and only if ${\rm lcdef}_x(Z) \leq d-2$.
\end{lemma}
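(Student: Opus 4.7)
The plan is to work on a small Zariski open neighborhood $V$ of $x$ (still having $V\setminus\{x\}$ a rational homology manifold) and to analyze the standard triangle
\[ i_{x,*} i_x^! \QQ_V^H[d] \to \QQ_V^H[d] \to j_* \QQ_{V\setminus\{x\}}^H[d] \xrightarrow[]{+1}\]
in $D^b({\rm MHM}(V))$, where $j\colon V\setminus\{x\}\hookrightarrow V$ and $i_x\colon \{x\}\hookrightarrow V$. Since $V\setminus\{x\}$ is a rational homology manifold, $\QQ_{V\setminus\{x\}}^H[d]$ is a perverse mixed Hodge module, so $j_*$ of it lies in ${}^pD^{\geq 0}({\rm MHM}(V))$. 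The long exact sequence of perverse cohomologies then yields isomorphisms
\[ \cohH^{-j}(i_{x,*} i_x^! \QQ_V^H[d]) \xrightarrow[]{\sim} \cohH^{-j}(\QQ_V^H[d]) \quad \text{for every } j \geq 1. \]
Since $i_{x,*}$ is exact, the left-hand side equals $i_{x,*} H^{d-j}_{\{x\}}(Z)$, so every strictly negative perverse cohomology of $\QQ_V^H[d]$ is a skyscraper at $x$ whose stalk is a local cohomology group.

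Combining this with the fact that $\cohH^0(\QQ_V^H[d])\neq 0$ (its image under $\gamma_V$ in $\mathrm{IC}_V^H$ is nonzero), we obtain
\[ {\rm lcdef}_x(Z) = \max\bigl(\{0\}\cup\{j\geq 1 : H^{d-j}_{\{x\}}(Z) \neq 0\}\bigr). \]
Hence ${\rm lcdef}_x(Z) \leq d-2$ is equivalent to the vanishing of $H^{d-j}_{\{x\}}(Z)$ for $j=d-1$ and $j=d$, namely $H^0_{\{x\}}(Z) = H^1_{\{x\}}(Z) = 0$. The localization long exact sequence applied to the pair $(V, V\setminus\{x\})$, using that a small Euclidean ball at $x$ is contractible and that its puncture deformation-retracts onto the link $L_x$, gives $H^0_{\{x\}}(Z)=0$ automatically and $H^1_{\{x\}}(Z) \cong \widetilde H^0(L_x)$.

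It therefore suffices to check that $L_x$ is connected if and only if $Z$ is irreducible near $x$. Since $V\setminus\{x\}$ is a rational homology manifold, $L_x$ is a closed oriented rational homology $(2d-1)$-manifold, so $\dim H^{2d-1}(L_x)$ equals the number of its connected components. The same localization sequence gives $H^{2d}_{\{x\}}(Z) \cong H^{2d-1}(L_x)$, and Brion's result (cited just before the statement) says $Z$ is irreducible near $x$ precisely when $H^{2d}_{\{x\}}(Z)$ is one-dimensional. Chaining equivalences finishes the proof. The main technical step is the first paragraph---identifying every strictly negative perverse cohomology of $\QQ_V^H[d]$ as a skyscraper at $x$ computing local cohomology; after that, the remainder is a standard topological unwinding via the link and Brion's theorem.
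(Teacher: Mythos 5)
Your proof is correct, but it takes a genuinely different route from the paper's. The paper applies $i_x^!$ to the triangle $\QQ_Z^H[d]\to\mathbf D_Z(\QQ_Z^H[d])(-d)\to S^\bullet\xrightarrow{+1}$, uses that the hypothesis forces $S^\bullet$ to be supported at $\{x\}$ so that $i_x^!S^\bullet = S'$ with $S^\bullet = i_{x*}S'$, and reads the statement off the resulting four-term exact sequence $0\to\cohH^{d-1}S'\to H^{2d}_{\{x\}}(Z)\to\QQ(-d)\to\cohH^dS'\to 0$ combined with the identification ${\rm lcdef}(Z)=\max\{a\mid\cohH^aS^\bullet\neq 0\}$. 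You instead use the recollement triangle $i_{x*}i_x^!\QQ_V^H[d]\to\QQ_V^H[d]\to j_*\QQ_{V\setminus\{x\}}^H[d]$ and the fact that the last term is in ${}^pD^{\geq 0}$ (because $\QQ_{V\setminus\{x\}}^H[d]$ is perverse when $V\setminus\{x\}$ is a rational homology manifold and $j_*$ is left $t$-exact) to identify all strictly negative perverse cohomologies of $\QQ_V^H[d]$ as skyscrapers with stalks $H^{d-j}_{\{x\}}(Z)$; this gives a clean closed-form for ${\rm lcdef}_x(Z)$, reduces the question to $H^1_{\{x\}}(Z)=0$, and you close via Poincar\'e duality on the link and Brion. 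The paper's route stays entirely inside the $S^\bullet$ formalism and needs only the exact sequence plus the Hartshorne--Lichtenbaum bound to kill $\cohH^dS'$; yours buys the pleasant intermediate identity ${\rm lcdef}_x(Z)=\max\bigl(\{0\}\cup\{j\geq 1: H^{d-j}_{\{x\}}(Z)\neq 0\}\bigr)$ and a more geometric picture via the link. Two points worth tightening: (i) the step ``${\rm lcdef}_x(Z)\leq d-2$ iff vanishing for $j\in\{d-1,d\}$'' implicitly assumes $d\geq 2$ (for $d=1$ the left-hand inequality is vacuously false, as it should be, but the translation $j\in\{d-1,d\}$ picks up $j=0$, which is not in the index set of your formula); (ii) the assertion that $L_x$ is a closed oriented rational homology $(2d-1)$-manifold deserves a sentence of justification --- it follows because $V\setminus\{x\}$ is an RHM of real dimension $2d$ which is locally $L_x\times(0,1)$, so RHM-ness and the $\QQ$-orientation descend to $L_x$.
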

\begin{proof} If $Z$ is a rational homology manifold, then \cite{Brion}*{Prop. A1} shows that $Z$ is irreducible. Moreover, ${\rm lcdef}(Z) = 0$ in this case. So we assume $Z$ is not a rational homology manifold at $x$.

Let $i_x \colon \{x\} \to Z$ be the inclusion. Apply $i_x^!$ to the triangle
\[ \QQ_Z^H[d] \to \mathbf D_Z(\QQ_Z^H[d])(-d) \to S^\bullet \xrightarrow[]{+1}\]
to get
\[ i_x^! \QQ_Z^H[d] \to \QQ^H(-d)[-d] \to i_x^! S^\bullet \xrightarrow[]{+1}.\]

This gives the exact sequence
\[ 0 \to \cohH^{d-1} i_x^! S^\bullet \to H^{2d}_{x}(Z) \to \QQ(-d) \to \cohH^{d} i_x^! S^\bullet \to 0,\]
and so it suffices to prove that $\cohH^{d-1}i_x^! S^\bullet = \cohH^{d} i_x^! S^\bullet = 0$ if and only if ${\rm lcdef}(Z) \leq d-2$.

As $Z\setminus \{x\}$ is a rational homology manifold, we know $S^\bullet$ is supported at $x$. Thus, we can write $S^\bullet = i_{x*}S'$ for some $S' \in D^b( {\rm MHM}(\{x\}))$ and so $i_x^! S^\bullet = S'$. Thus, it suffices to prove that
\[ \cohH^{d-1} S' =\cohH^{d}S' = 0 \text{ if and only if } {\rm lcdef}(Z) \leq d-2.\]

Again, as $i_{x*}S' = S^\bullet$, it suffices to consider $\cohH^\bullet S^\bullet$ itself. But then by definition (using that $S^\bullet\neq 0$), we have
\[ {\rm lcdef}(Z) = \max\{a \mid \cohH^{a}(\mathbf D_Z(K_Z^\bullet)) \neq 0\} = \max\{a \mid \cohH^{a}S^\bullet \neq 0 \},\]
so the claim follows.
\end{proof}

In the non-isolated setting, we see that $\HRH_x(Z)$ can also guarantee irreducibility, as well as vanishing of some higher local cohomology spaces of $Z$ at $x$. 

\begin{proof}[Proof of \theoremref{thm-TopLocCoh}] Apply $i_x^!$ to the triangle
\[ \QQ_Z^H[d] \to \mathbf D_Z (\QQ_Z^H[d])(-d) \to S^\bullet \xrightarrow[]{+1},\]
which gives
\[ i_x^! \QQ_Z^H[d] \to \QQ^H(-d) [-d] \to i_x^! S^\bullet \xrightarrow[]{+1},\]
and thus, by the long exact sequence in cohomology, we get an exact sequence
\[ 0 \to \cohH^{d-1} i_x^! S^\bullet \to H_{\{x\}}^{2d}(Z) \to \QQ(-d) \to \cohH^{d} i_x^! S^\bullet \to 0\]
and for $i\geq 2$, we get isomorphisms
\[ \cohH^{d-i} i_x^! S^\bullet \cong H^{2d-i+1}_{\{x\}}(Z).\]

By \lemmaref{lem-CohomologyBounds} and \lemmaref{lem-SupportDimBound} we get (using that ${\rm HRH}_x(Z) \geq 0$):
\[ \cohH^{j} i_x^! S^\bullet = 0 \text{ for all } j \geq d -2{\rm HRH}_x(Z) - 2,\]
and so we get the desired vanishing.
\end{proof}

The cohomology of the link $L_x$ of $Z$ at $x$ can be described, following \cite{DurfeeSaito}, by the cohomology of the cone
\[ {\rm cone}( i_x^!\QQ^H_Z \to i_x^*\QQ^H_Z) \in D^b({\rm MHM}(\{x\})).\]

In other words, if $U = Z\setminus \{x\}$ with inclusion $j\colon U\to Z$, we have equality 
\[ H^k(L_x) = \cohH^k i_x^*j_*(\QQ^H_U).\]

Immediately from the definition, we get a long exact sequence of mixed Hodge structures \cite{DurfeeSaito}*{Prop. 3.5}:
\[ \dots \to H^k_{\{x\}}(Z) \to H^k(\{x\}) \to H^k(L_x) \to \dots.\]

As $\{x\}$ is simply a point, we get isomorphisms of mixed Hodge structures
\[ \QQ \cong H^0(L_x), \quad H^{1}_{\{x\}}(Z)=0, \quad H^k(L_x) \cong H^{k+1}_{\{x\}}(Z) \text{ for all } k \geq 1.\]

More generally, if $\mathcal S = \{S_\alpha\}_{\alpha \in I}$ is a Whitney stratification of $Z$, with $i_\alpha \colon S_\alpha \to Z$ the locally closed embedding, $j_\alpha \colon U_\alpha \to Z$ the inclusion of the complement, and $a_\alpha \colon S_\alpha \to *$ the constant map, \cite{DurfeeSaito}*{Prop. 2.11} explains that the cohomology $H^i(L_\alpha)$ of the link $L_\alpha$ of $Z$ at the stratum $S_\alpha$ is given by the cohomology
\[ \cohH^i (a_\alpha)_* i_\alpha^* j_{\alpha,*}(\QQ_{U_\alpha})\]
and hence we get the mixed Hodge structure on this cohomology by
\[ \cohH^i (a_\alpha)_* i_\alpha^* j_{\alpha,*}(\QQ_{U_\alpha}^H).\]

The following was pointed out to us by Lauren\c{t}iu Maxim.

\begin{lemma} \label{lem-LinkCoh} In the notation above, for any $\alpha \in I$ and $x\in S_\alpha$, there is an isomorphism of mixed Hodge structures
\[ H^i_{\{x\}}(Z) \cong H^{i-2\dim S_\alpha -1}(L_\alpha)(\dim S_\alpha).\]
\end{lemma}
\begin{proof} Let $T$ be a normal slice to $S_\alpha$ at $x$. To define this, near $x$ choose an embedding $i\colon Z \to X$ of $Z$ into a smooth algebraic variety $X$. Then there exists a submanifold $T$ of $X$ of dimension equal to ${\rm codim}_Z(S_\alpha)$ such that the intersection $T\cap S_\alpha = \{x\}$ is transversal.

The transversality implies that, locally, the inclusion map $i_T \colon T \to X$ is non-characteristic with respect to any mixed Hodge module which is constructible with respect to $\mathcal S$: indeed, Whitney's (a) condition implies $T$ has transversal intersection with any stratum $S_\beta$ such that $S_\alpha \subseteq \overline{S_\beta}$, and so if we remove the closures of strata which do not satisfy that property, we obtain this non-characteristic property. Note that removing the closures of those strata replaces $Z$ with an open neighborhood of $S_\alpha$, which does not matter due to the local nature of the claim.

Non-characteristic restrictions satisfy $(i_T)^! = (i_T)^*[-2\dim S_\alpha](\dim S_\alpha)$ (see, for example, \cite{SaitoMHM}*{Lem. 2.25}). Here we use that the codimension of the embedding $i_T$ is equal to $\dim S_\alpha$, by construction.

We have the following commutative diagram of embeddings
\[ \begin{tikzcd} &  Z \ar[dr,"i"] & \\ \{x\} \ar[r,"\kappa_x"] \ar[ru,"\iota_x"] \ar[swap, rd,"k_x"] & Z \cap T \ar[d] \ar[u] \ar[r]  &  X \\ &  T \ar[ru,swap,"i_T"] &  \end{tikzcd}\]
where $i \circ \iota_x = i_T \circ k_x = i_x \colon \{x\} \to X$.

Thus,
\[ \iota_x^! \QQ_Z^H = i_x^! i_* \QQ_Z^H = k_x^! i_T^!(i_* \QQ_Z^H) = k_x^! i_T^*(i_* \QQ_Z^H) [-2\dim S_\alpha](\dim S_\alpha)\]
\[ = \kappa_x^!(\QQ_{Z\cap T}^H) [-2\dim S_\alpha](\dim S_\alpha)\]
and we get the isomorphism of mixed Hodge structures
\[ H^i((i_x)^! \QQ_Z^H) \cong H^{i-2\dim S_\alpha}(\kappa_x^! \QQ^H_{Z\cap T})(\dim S_\alpha).\]

Finally, for $i-2\dim S_\alpha \geq 2$, we can identify the space on the right with 
\[ H^{i-2\dim S_\alpha -1}((Z\cap T) \setminus \{x\})(\dim S_\alpha)\] because we can take $Z\cap T$ to be contractible.

In the Euclidean topology, we have a local ``product neighborhood'' $U \cong (Z\cap T) \times B$ of the point $x$ in $Z$ where $B$ is a small ball centered at $x$ and with the property that $U\cap S_\alpha \cong \{x\}\times B$. The link $L_\alpha$ of $Z$ at $S_\alpha$ is homotopy equivalent to $U \setminus (U\cap S_\alpha) \cong (Z\cap T \setminus \{x\}) \times B$, hence to $Z\cap T \setminus \{x\}$ as $B$ is contractible. Thus, we have an isomorphism
\[ H^{i-2\dim S_\alpha-1}((Z\cap T)\setminus \{x\})(\dim S_\alpha) \cong H^{i-2\dim S_\alpha-1}(L_\alpha)(\dim S_\alpha),\]
proving the claim.
\end{proof}

\begin{theorem}\label{prop-LocCohAtPoint} Let $Z$ be a complex variety of pure dimension $d$ and let $x\in Z$ be a point. Then the following are equivalent for $k\in \ZZ_{\geq 0}$:
\begin{enumerate}
    \item $\HRH_x(Z)\geq k$,
    \item $F_{k-d} H^i_{\{y\}}(Z)= \begin{cases} 0 & i < 2d \\ \CC & i = 2d\end{cases}$ for all $y$ in a neighborhood of $x$ in $Z$,
    \item $F_{k-d} H^{i-1}(L_y)= \begin{cases} 0 & 0 < i < 2d \\ \CC & i = 2d\end{cases}$ for all $y$ in a neighborhood of $x$ in $Z$.
\end{enumerate} 
\end{theorem}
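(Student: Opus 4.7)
The strategy is to reformulate condition (1) via the cone $S^{\bullet} := \mathrm{cone}(\psi_{Z})$ in $D^{b}(\mathrm{MHM}(Z))$. By Corollary \ref{cor-technical} combined with strictness of morphisms of mixed Hodge modules with respect to the Hodge filtration, condition (1) is equivalent to the global vanishing $F_{k-d} S^{\bullet} = 0$. Invoking the lemma proved earlier in the section asserting that $p(M^{\bullet}) \geq j$ if and only if $p(i_{x}^{!} M^{\bullet}) \geq j$ at every $x \in Z$, this in turn is equivalent to the pointwise vanishing $F_{k-d} i_{x}^{!} S^{\bullet} = 0$ for all $x$. The problem then reduces to matching this pointwise vanishing with condition (2) at $x$.

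For fixed $x$, I apply $i_{x}^{!}$ to the defining triangle $\QQ_{Z}^{H}[d] \to \mathbf{D}_{Z}(\QQ_{Z}^{H}[d])(-d) \to S^{\bullet} \xrightarrow{+1}$. Using $i_{x}^{!} = \mathbf{D}_{\mathrm{pt}} i_{x}^{*} \mathbf{D}_{Z}$ together with the involutivity of Verdier duality and $\mathbf{D}(M(j)) = \mathbf{D}(M)(-j)$, the middle term is identified with $\QQ^{H}(-d)[-d]$, a complex concentrated in degree $d$ whose $F_{k-d}$-piece is the one-dimensional space $\QQ$ for every $k \geq 0$. Since $\cohH^{j}(i_{x}^{!} \QQ_{Z}^{H}[d]) = H^{j+d}_{\{x\}}(Z)$, the long exact sequence in cohomology, which remains exact after applying $F_{k-d}$ by strictness, yields for $j \leq d-2$ the isomorphism
\[ F_{k-d} \cohH^{j}(i_{x}^{!} S^{\bullet}) \;\cong\; F_{k-d} H^{j+d+1}_{\{x\}}(Z), \]
and at the top range the four-term exact sequence
\[ 0 \to F_{k-d} \cohH^{d-1}(i_{x}^{!} S^{\bullet}) \to F_{k-d} H^{2d}_{\{x\}}(Z) \to \QQ \to F_{k-d} \cohH^{d}(i_{x}^{!} S^{\bullet}) \to 0, \]
together with vanishing in higher cohomological degrees. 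Assembling these, and using $H^{0}_{\{x\}}(Z) = H^{1}_{\{x\}}(Z) = 0 = H^{2d+1}_{\{x\}}(Z)$, the condition $F_{k-d} i_{x}^{!} S^{\bullet} = 0$ becomes equivalent to $F_{k-d} H^{i}_{\{x\}}(Z) = 0$ for all $i < 2d$ together with the isomorphism $F_{k-d} H^{2d}_{\{x\}}(Z) \cong \QQ$, which is condition (2).

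The equivalence $(2)\Leftrightarrow(3)$ is then purely formal: the Durfee--Saito long exact sequence
\[ \cdots \to H^{k}_{\{x\}}(Z) \to H^{k}(\{x\}) \to H^{k}(L_{x}) \to H^{k+1}_{\{x\}}(Z) \to \cdots \]
recalled in \S\ref{link-sec}, together with the fact that $H^{\bullet}(\{x\})$ is concentrated in degree zero, forces $H^{k}(L_{x}) \cong H^{k+1}_{\{x\}}(Z)$ as mixed Hodge structures for $k \geq 1$, and condition (2) transports into condition (3) through these isomorphisms degree by degree.

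The main technical point I foresee is the analysis of the top-degree four-term sequence: one must verify that the connecting map $H^{2d}_{\{x\}}(Z) \to \QQ(-d)$ produced by $\psi_{Z}$ is precisely the trace or fundamental-class pairing, so that vanishing of the two outer terms genuinely forces $F_{k-d} H^{2d}_{\{x\}}(Z)$ to be a one-dimensional Tate summand of the correct Hodge type, not merely a vector space of the right dimension. Once this compatibility is in hand, everything else is a chase through the two long exact sequences.
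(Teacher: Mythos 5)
Your proof follows the same route as the paper. Both arguments reduce condition (1) to the vanishing $F_{k-d}S^\bullet = 0$ via Corollary~\ref{cor-technical}, both invoke the earlier unnamed lemma asserting $p(M^\bullet)\geq j \iff p(i_x^!M^\bullet)\geq j$ for all $x$ to localize at points, and both then analyze $i_x^!$ applied to the defining triangle, identifying the middle term with $\QQ^H(-d)[-d]$. You simply spell out the resulting long exact sequence, including the four-term segment at the top, rather than compressing it to a single line as the paper does. Your handling of the equivalence $(2)\Leftrightarrow(3)$ via the Durfee--Saito long exact sequence also matches the discussion preceding the theorem in \S\ref{link-sec}, so there is no divergence in substance.

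One small correction to the caveat you raise at the end: the nonvanishing of the map $\cohH^d(i_x^!\psi_Z)\colon H^{2d}_{\{x\}}(Z)\to\QQ(-d)$ is not needed for the implication $(1)\Rightarrow(2)$, which you frame it around; if both outer terms of the four-term sequence vanish, the middle arrow is automatically an isomorphism and $F_{k-d}H^{2d}_{\{x\}}(Z)\cong\QQ$ follows with no input about the nature of that arrow. Where the surjectivity genuinely matters is the \emph{converse} $(2)\Rightarrow(1)$: knowing only that $F_{k-d}H^{2d}_{\{x\}}(Z)$ is one-dimensional, one must rule out the possibility that the arrow to $\QQ$ is zero (which would leave both outer terms one-dimensional). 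This is settled by observing that $H^{2d}_{\{x\}}(Z)\to\QQ(-d)$ is a surjection of mixed Hodge structures (it factors through $\cohH^d i_x^!\,{\rm IC}_Z^H$ by $\gamma_Z, \gamma_Z^\vee$, and the kernels and cokernels in the factorization have support of dimension $<d$, hence contribute nothing to $\cohH^d i_x^!$); by strictness, its $F_{k-d}$-piece is still surjective, and a surjection from a one-dimensional space onto $\QQ$ is an isomorphism. The paper is terse about exactly this point as well, so flagging it is good practice even if the direction you attach it to is slightly off.
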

\begin{proof} As the claim is local, we can assume $Z$ is embeddable into a smooth variety. Let $i_x\colon \{x\} \to Z$ be the inclusion of a point $x$.

We have $\HRH_x(Z) \geq k$ iff in a neighborhood $U$ of $x$, the map
\[ F_{k-d}\psi_Z \colon F_{k-d}\QQ_Z^H[d] \to F_{k-d}(\mathbf D_Z(\QQ^H_Z[d])(-d))\]
is a quasi-isomorphism, which is equivalent to $F_{k-d}S^\bullet\vert_{U} = 0$, where as above $S^\bullet = {\rm cone}(\psi_Z)$.

By \lemmaref{lem-pInvariantVanishing}, we have $F_{k-d}S^\bullet\vert_U = 0$ if and only if $F_{k-d} i_y^!S^\bullet = 0$ for all $y\in U$, and the latter claim is true if and only if $F_{k-d}i_y^!\QQ^H_Z[d] \to F_k(\QQ^H[-d])$ is a quasi-isomorphism for all $y\in U$, giving the result.
\end{proof}

\begin{proof}[Proof of \theoremref{thm-characterize}\eqref{thm-link}] This follows immediately from \theoremref{prop-LocCohAtPoint} and \lemmaref{lem-LinkCoh}.
\end{proof}


We now specialize to the isolated singularities case. In \cite{FLIsolated}, the \emph{link invariants} of $Z$ at $x$ are defined by
\begin{equation} \label{eq-linkInvs} \ell^{p,q}= \dim_{\CC}{\rm Gr}_F^{p}H^{p+q}(L_x).\end{equation}

We can restate the condition $\HRH_x(Z)\geq k$ in terms of these invariants, and give a generalization of \cite{FLIsolated}*{Thm. 1.15(i)}. We first prove that, in the isolated singularities case, the only interesting cohomology for $L_x$ appears in a range governed by ${\rm lcdef}_x(Z)$.

\begin{lemma} \label{lem-LinkCohSuppIso} Let $x\in Z$ be an isolated non-rational homology manifold point in an irreducible variety with ${\rm lcdef}_x(Z) = a$ and $L_x$ the link of $Z$ at $x$. Then
\[ H^{i}_{\{x\}}(Z) \neq 0 \implies i \in \{2d\} \cup [d-a,d+a+1].\]
or equivalently, 
\[ H^i(L_x) \neq 0 \implies i \in \{0,2d-1\} \cup [d-a-1,d+a].\]
\end{lemma}
\begin{proof} The argument follows that of \lemmaref{lem-Irreducible} above. As $Z$ is irreducible, we know by that lemma that $a\leq d-2$.

We assume for now that $a> 0$. Recall that if $K_Z^\bullet$ is the RHM defect object (which by the assumption $a>0$ is non-zero), then $a = \max\{i \mid \cohH^{-i}K_Z^\bullet \neq 0\} = \max\{i \mid \cohH^i \mathbf D_Z(K_Z^\bullet) \neq 0\}$. We observed above that
\[ \cohH^{-i}K_Z^\bullet \cong \cohH^{-i-1}S^\bullet \text{ for all } i> 0,\]
\[ \cohH^{i}\mathbf D_Z(K_Z^\bullet)(-d) \cong \cohH^{i}S^\bullet \text{ for all } i\geq 0,\]
and so we get the implication:
\[ \cohH^i S^\bullet \neq 0 \implies i \in [-a-1,a].\]

This implication clearly holds true if $a = 0$, too, because then $S^\bullet$ is the cone of a morphism between two mixed Hodge modules on $Z$.

We have the triangle
\[ \QQ_Z^H[d] \to \mathbf D(\QQ_Z^H[d])(-d) \to S^\bullet \xrightarrow[]{+1}, \]
where by the isolated non-RHM locus assumption, $S^\bullet = i_* S'$ for some $S' \in D^b({\rm MHM}(\{x\}))$, with $i\colon \{x\} \to Z$ the embedding. By applying $i^!$, we get the triangle in $D^b({\rm MHS})$
\[ i^!\QQ_Z^H[d] \to \QQ^H[-d](-d) \to S' \xrightarrow[]{+1}.\]

Thus, for all $i \leq d-1$, we have isomorphisms
\[ \cohH^{i-1}S' \cong H^{d+i}_{\{x\}}(Z),\]
but note that $\cohH^{i-1}S' \neq 0 \implies i-1 \in [-a-1,a]$ as well. So we conclude that
\[ H^{d+i}_{\{x\}}(Z) \neq 0 \implies i \in [-a,a+1].\]

The claim for link cohomology follows from the isomorphism
\[ H^{i-1}(L_x) \cong H^i_{\{x\}}(Z) \text{ for all } i \geq 2.\]
\end{proof}

\begin{proof}[Proof of \corollaryref{corlinkiso}] The first statement trivially implies the second.

For the first statement, note that $\ell^{d-i,q-d+i} = 0$ for all $i\leq k$ if and only if $F^{d-i} H^{q}(L_x) = 0$ for all $i\leq k$ if and only if $F_{k-d}H^{q}(L_x) = 0$. So, by \theoremref{prop-LocCohAtPoint}, this shows that $\HRH_x(Z)\geq k$ implies those vanishings.

For the converse, we need to also check the following two statements:
\begin{enumerate}
    \item \label{itm-1} $F_{k-d}H^{p}(L_x) = 0$ for $p\in [d-a-1,d-1]$
    \item \label{itm-2} $F_{k-d} H^{2d-1}(L_x) = \CC$
\end{enumerate}

Note that Statement \eqref{itm-2} is automatic for $k=0$. Thus, after we prove that Statement \eqref{itm-1} holds for $k=0$, then we get Statement \eqref{itm-2} automatically for all $k$. Indeed, \theoremref{thm-TopLocCoh} implies in this case that $\dim_{\CC} H^{2d-1}(L_x) = 1$.

Thus, we have reduced to proving that Statement \eqref{itm-1} is true under the assumption $F_{k-d} H^p(L_x) = 0 \text{ for } p\in [d,d+a]$.

We apply \cite{FLIsolated}*{Prop. 2.8} and Serre duality \cite{FLIsolated}. The duality says that $\ell^{p,q} = \ell^{d-p,d-q-1}$. 

Thus, our assumption $\ell^{d-i,q-d+i} = 0$ gives $\ell^{i,2d-q-i-1} = 0$ for all $i\leq k$ and $q \in [d,d+a]$. In other words, for $p \in [d-a-1,d-1]$, we have $\ell^{i,p-i} = 0$ for all $i\leq k$. Then \cite{FLIsolated}*{Prop. 2.8} gives
\[ \sum_{i=0}^k \ell^{p-i,i} \leq \sum_{i=0}^k \ell^{i,p-i} = 0\]
and so $\ell^{p-i,i} = 0$ for all $i\leq k$, too. Thus,
\[ F_{k-d} H^p(L_x) \subseteq F_{k-p} H^p(L_x) = 0,\]
where the first containment uses that $p\leq d-1$. This completes the proof.
\end{proof}

\begin{remark} \label{rmk-RecoverFL} If $x\in Z$ is an isolated singular point and $Z$ has local complete intersection singularities near $x$, \cite{FLIsolated}*{Thm. 1.15(i)} says that $Z$ is $k$-rational near $x$ if and only if $Z$ is $k$-Du Bois near $x$ and $\ell^{k,d-k-1}=0$. 

In the local complete intersection setting, by the Serre duality relation $\ell^{p,q}= \ell^{d-p,d-q-1}$, this condition is equivalent to the vanishing $\ell^{d-k,k} = 0$.

In fact, this argument shows that for any $Z$ with an isolated singular point at $x$ and satisfying ${\rm lcdef}_x(Z) = 0$, the result of Friedman-Laza holds.
\end{remark}

\subsection{Behavior under finite group quotients and \'etale morphisms} Using the criteria for $\HRH(Z) \geq k$ in terms of $H^\bullet_{\{x\}}(Z)$, we see easily that $\HRH$ descends under finite group quotients and is preserved under \'{e}tale morphisms.

The following corollary should be compared to \cite{SVV}*{Prop. 4.2 (2)}.

\begin{corollary}\label{cor-descent2}
Let $\pi\colon Z\to W$ be the quotient of a variety $Z$ by the action of a finite group $G$. Then,
\[\HRH(W) \geq \HRH(Z).\]

In particular, if $Z$ and $W$ are in addition normal, and $Z$ has pre-$k$-rational singularities, then the singularities of $W$ are also pre-$k$-rational. 
\end{corollary}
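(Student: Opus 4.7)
The plan is to combine the local-cohomology characterization from \theoremref{prop-LocCohAtPoint} with the fact that $\QQ_W^H$ appears as a canonical direct summand of $\pi_*\QQ_Z^H$ in $\MHM(W)$. Since $\pi$ is finite, $\pi_* = \pi_!$, and the $G$-action on $\pi_*\QQ_Z^H$ lets the averaging projector $\frac{1}{|G|}\sum_{g\in G} g$ split off the trivial isotypic component, which is $\QQ_W^H$. Note also that $\dim Z = \dim W =: d$ and $W$ is pure of dimension $d$, since its irreducible components are quotients of those of $Z$.

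The first step will be, for fixed $w \in W$ with $\pi^{-1}(w) = \{x_1,\dots, x_r\}$ (a single $G$-orbit), to invoke base change (\exampleref{eg-BaseChange}) for the Cartesian square with vertical maps $\pi$ and horizontal inclusions $i_w \colon \{w\}\to W$ and $\tilde\iota\colon \pi^{-1}(w)\to Z$. This gives
\[ i_w^!\,\pi_*\QQ_Z^H \;\cong\; \bigoplus_{j=1}^r i_{x_j}^!\QQ_Z^H \]
in $D^b(\MHM(\{w\}))$, equivariantly for the $G$-action permuting the summands. Passing to the $G$-invariant direct summand and taking $\cohH^i$ will produce a strictly filtered identification
\[ H^i_{\{w\}}(W) \;\cong\; \Big(\bigoplus_{j=1}^r H^i_{\{x_j\}}(Z)\Big)^G. \]

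Setting $k = \HRH(Z)$ and applying \theoremref{prop-LocCohAtPoint} at each $x_j$, I then have $F_{k-d}H^i_{\{x_j\}}(Z) = 0$ for $i < 2d$ and $F_{k-d}H^{2d}_{\{x_j\}}(Z) = \QQ$. Taking $G$-invariants, the vanishing persists, and the $G$-invariants of $\QQ^r$ under a transitive permutation action are spanned by the diagonal vector, hence one-dimensional. Running this at every $w\in W$ and invoking the reverse implication of \theoremref{prop-LocCohAtPoint} yields $\HRH(W) \geq k = \HRH(Z)$. For the ``In particular'' part, when $Z$ is normal and pre-$k$-rational, \remarkref{rmk-PPProperties} (3) produces $\HRH(Z) \geq k$, hence $\HRH(W) \geq k$ by what was just proved. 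Pre-$k$-rational implies pre-$k$-Du Bois; applying $\Gr^F_{-p}\DR_W$ to the splitting $\QQ_W^H[d] \hookrightarrow \pi_*\QQ_Z^H[d]$ and using \propositionref{prop-GrDRProps} together with the exactness of $\pi_*$ for finite $\pi$, one sees that $\underline{\Omega}_W^p$ is a direct summand of $\pi_*\underline{\Omega}_Z^p$ in $D^b_{\rm coh}(\shO_W)$; so it is concentrated in degree zero for $p \leq k$, and $W$ is pre-$k$-Du Bois. Then \remarkref{rmk-PPProperties} (4)(a), using normality of $W$ and $\HRH(W) \geq k$, upgrades this to pre-$k$-rationality of $W$.

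The main obstacle, though mostly bookkeeping, will be verifying that the base-change isomorphism is both $G$-equivariant and strictly filtered, so that after taking $G$-invariants the equality $F_{k-d}H^i_{\{w\}}(W) = \bigl(F_{k-d}\bigoplus_j H^i_{\{x_j\}}(Z)\bigr)^G$ is genuine (rather than a mere inclusion) and the ``$\QQ$ or $0$'' dichotomy of \theoremref{prop-LocCohAtPoint} is truly transferred. This is essentially formal from the six-functor formalism of \cite{SaitoMHM}, but should be spelled out, since the entire argument hinges on it.
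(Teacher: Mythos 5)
Your proposal is correct and takes essentially the paper's route: the paper simply invokes Brion's argument for the isomorphism $H^i_{\{\pi(x)\}}(W)\cong H^i_{Gx}(Z)^G\cong H^i_{\{x\}}(Z)^{G_x}$ (which you rederive from base change and the $G$-averaging splitting of $\pi_*\QQ_Z^H$), then applies \theoremref{prop-LocCohAtPoint}; for the last assertion it cites \cite{SVV}*{Prop. 4.2 (1)} in place of your direct argument that $\underline{\Omega}_W^p$ is a summand of $\pi_*\underline{\Omega}_Z^p$. The one point you flag but leave open and that genuinely needs an argument is why the $G$-action on $\bigoplus_j F_{k-d}H^{2d}_{\{x_j\}}(Z)\cong\QQ^r$ is a \emph{pure} permutation action, so that the invariants are the one-dimensional diagonal rather than $0$: this holds because the $G_{x_j}$-equivariant map $H^{2d}_{\{x_j\}}(Z)\to \cohH^{d}i_{x_j}^!\mathbf D_Z(\QQ_Z^H[d])(-d)\cong\QQ(-d)$ appearing in the proof of \theoremref{prop-LocCohAtPoint} restricts to an isomorphism on $F_{k-d}$ and has trivial $G_{x_j}$-action on the Tate target, forcing the stabilizer to act trivially on the one-dimensional Hodge piece.
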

\begin{proof}
Let $x\in Z$ such that $\pi(x)$ is singular. Using \cite{Brion}*{Proof of Prop. A1}, we have the isomorphisms of mixed Hodge structures \[H^i_{\left\{x\right\}}(Z)^{G_x}\cong H^i_{Gx}(Z)^G\cong H^i_{\left\{\pi(x)\right\}}(W).\]   
The assertion follows by taking Hodge pieces and applying \theoremref{prop-LocCohAtPoint}. 

The last assertion follows by combining this with \cite{SVV}*{Prop. 4.2 (1)}.
\end{proof}

\begin{lemma} Let $\varphi \colon Z_1 \to Z_2$ be a surjective \'{e}tale morphism. Then we have $\HRH(Z_1) = \HRH(Z_2)$.
\end{lemma}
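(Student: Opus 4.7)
The plan is to combine the characterization of $\HRH$ via local cohomology at points (\theoremref{prop-LocCohAtPoint}) with the fact that étale morphisms identify local cohomology mixed Hodge structures at matched points. Since $\varphi$ is étale and surjective, $Z_1$ and $Z_2$ share the same pure dimension $d$. For any $x_1 \in Z_1$ with image $x_2 := \varphi(x_1)$, the relation $i_{x_2} = \varphi \circ i_{x_1}$ combined with the identity $\varphi^! = \varphi^*$ for étale morphisms yields an isomorphism
\[
i_{x_1}^! \QQ_{Z_1}^H \;=\; i_{x_1}^! \varphi^! \QQ_{Z_2}^H \;=\; (\varphi \circ i_{x_1})^! \QQ_{Z_2}^H \;=\; i_{x_2}^! \QQ_{Z_2}^H
\]
in $D^b(\MHM(\mathrm{pt}))$. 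Taking cohomology gives $H^i_{\{x_1\}}(Z_1) \cong H^i_{\{x_2\}}(Z_2)$ as mixed Hodge structures for every $i$, compatibly with the Hodge filtration.

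By \theoremref{prop-LocCohAtPoint}, $\HRH(Z_j) \geq k$ is equivalent to the prescribed Hodge filtered vanishing conditions on $H^{\bullet}_{\{x\}}(Z_j)$ at every $x \in Z_j$. The isomorphism above shows that the condition at $x_1 \in Z_1$ is identical to the condition at $\varphi(x_1) \in Z_2$. Since $\varphi$ is surjective, every $x_2 \in Z_2$ admits at least one preimage $x_1 \in Z_1$, so the vanishing over all of $Z_1$ implies the vanishing over all of $Z_2$; conversely, since every $x_1 \in Z_1$ has some image $\varphi(x_1) \in Z_2$, the reverse implication is automatic. Hence the two conditions are equivalent for each $k \geq 0$, yielding $\HRH(Z_1) = \HRH(Z_2)$.

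The only mildly subtle point is the identification $\varphi^! = \varphi^*$ for étale morphisms of (possibly singular) varieties in the mixed Hodge module formalism. This is standard: étale maps are smooth of relative dimension zero with trivial relative dualizing complex, and the identification can be verified by embedding $Z_2$ locally in a smooth variety $X$, pulling back along an étale lift $X' \to X$, and invoking the well-known equality of the two functors for étale maps between smooth varieties.
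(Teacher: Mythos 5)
Your proof is correct and follows the same approach as the paper: the paper simply cites the isomorphism $H^i_{\{x\}}(Z_1)\cong H^i_{\{\varphi(x)\}}(Z_2)$ of mixed Hodge structures and then invokes \theoremref{prop-LocCohAtPoint}, while you additionally spell out that this isomorphism comes from $\varphi^!=\varphi^*$ for \'{e}tale $\varphi$ together with $i_{x_1}^!\varphi^! = i_{\varphi(x_1)}^!$. The extra detail is accurate and harmless; the surjectivity step is used exactly as in the paper.
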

\begin{proof} We have an isomorphism of mixed Hodge structures for any $x\in Z_1$
\[ H^i_{\{x\}}(Z_1) \cong H^i_{\{\varphi(x)\}}(Z_2),\]
and so the claim follows from \theoremref{prop-LocCohAtPoint}.
\end{proof}

By Luna's \'{e}tale slice theorem \cite{Luna}, the previous two lemmas show that the rational homology manifold condition descends under geometric quotients, which is probably well known to experts. Note that if the action does not give a geometric quotient, then the result fails (we learned this result from Wanchun Shen): the hypersurface $Z = V(x_1^2+ \dots +x_6^2) \subseteq \mathbb A^6$ is a GIT quotient which is not a rational homology manifold by \cite{DOR2}*{Eg. 7.7}.

\begin{corollary} Let $G$ be a reductive group acting on a smooth variety $X$ such that the quotient $Z = X/G$ is a geometric quotient. Then $Z$ is a rational homology manifold.
\end{corollary}

\begin{lemma} \label{lem-kPRSSmoothPullback} Let $\varphi \colon Z_1 \to Z_2$ be a smooth surjective morphism. Then $\HRH(Z_1) = \HRH(Z_2)$.
\end{lemma}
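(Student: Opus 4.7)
The plan is to use Theorem~\ref{prop-LocCohAtPoint} to reduce the claim to a pointwise comparison of Hodge-filtered local cohomology. That theorem shows $\HRH(Z)\ge k$ can be tested point by point via the Hodge filtration on $H^i_{\{x\}}(Z)$, so it suffices to prove the pointwise equality $\HRH_x(Z_1)=\HRH_{\varphi(x)}(Z_2)$ and then take the minimum over $x\in Z_1$ (using surjectivity of $\varphi$, which is presumably implicit here as in the preceding lemma).

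To establish the pointwise equality, I would first pass to an \'etale neighborhood of $x\in Z_1$ in which the smooth morphism $\varphi$ takes the form of the projection $Z_2\times\mathbb{A}^m\to Z_2$, where $m=\dim Z_1-\dim Z_2$ and $x$ corresponds to $(\varphi(x),0)$. Étale localization does not change local cohomology as a mixed Hodge structure, so it suffices to treat this projection case. A Künneth-style isomorphism of mixed Hodge structures
\[
H^i_{\{(y,0)\}}(Z_2\times\mathbb{A}^m)\;\cong\;H^{i-2m}_{\{y\}}(Z_2)(-m),
\]
combined with the Tate-twist convention $F_p(M(-m))=F_{p+m}M$ and the relation $d_1=d_2+m$, yields
\[
F_{k-d_1}H^i_{\{x\}}(Z_1)\;\cong\;F_{k-d_2}H^{i-2m}_{\{\varphi(x)\}}(Z_2).
\]
Setting $i'=i-2m$ turns the vanishing/non-vanishing condition of Theorem~\ref{prop-LocCohAtPoint} at $x\in Z_1$ into exactly the one at $\varphi(x)\in Z_2$, giving $\HRH_x(Z_1)=\HRH_{\varphi(x)}(Z_2)$.

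The hardest step will be justifying the Künneth isomorphism at the mixed Hodge module level: one needs $i_x^!\QQ^H_{Z_2\times\mathbb{A}^m}\cong i_y^!\QQ^H_{Z_2}\boxtimes i_0^!\QQ^H_{\mathbb{A}^m}$ together with the standard computation $i_0^!\QQ^H_{\mathbb{A}^m}\cong\QQ^H(-m)[-2m]$. Both are consequences of the six-functor formalism of mixed Hodge modules and base change (\exampleref{eg-BaseChange}), but they should be invoked with some care. If preferred, a fully alternative route avoiding the Künneth computation is to combine the preceding \'etale lemma with \lemmaref{lem-ProductPartialSmooth} applied to the factor $\mathbb{A}^m$: it has $\HRH(\mathbb{A}^m)=+\infty$ and $\operatorname{lcdef}(\mathbb{A}^m)=0$, so one of the two alternatives of that lemma always applies (using that $\HRH(Z_2)=+\infty$ forces $\operatorname{lcdef}(Z_2)=0$ as well). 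Either route concludes $\HRH(Z_1)=\HRH(Z_2)$.
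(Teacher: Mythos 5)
Your proposal is correct, and you in fact outline two routes: the first (via \theoremref{prop-LocCohAtPoint}, étale localization, and a Künneth computation of local cohomology at a point) is a genuine variant, while the second (combine the preceding étale lemma with \lemmaref{lem-ProductPartialSmooth}) is precisely the paper's argument.

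The paper's proof is the one-line reduction: the question is local, so one reduces via the preceding lemma (surjective étale morphisms preserve $\HRH$, proved by \theoremref{prop-LocCohAtPoint}) to the projection case $Z_2\times\mathbb{A}^m\to Z_2$, which is handled by \lemmaref{lem-ProductPartialSmooth} with $Z_1=\mathbb{A}^m$. Your first route re-proves the projection case directly: instead of citing \lemmaref{lem-ProductPartialSmooth}, you compute $i_0^!\QQ^H_{\mathbb{A}^m}\cong\QQ^H(-m)[-2m]$ and the Künneth isomorphism $i_{(y,0)}^!(\QQ^H_{Z_2}\boxtimes\QQ^H_{\mathbb{A}^m})\cong i_y^!\QQ^H_{Z_2}\boxtimes i_0^!\QQ^H_{\mathbb{A}^m}$, then match Hodge indices against the criterion of \theoremref{prop-LocCohAtPoint} using $d_1=d_2+m$ and the Tate twist convention $F_p(\cM(-m))=F_{p+m}\cM$. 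This is self-contained and arguably conceptually cleaner, since it avoids the case distinction (``$\HRH(Z_1)\neq\HRH(Z_2)$ or both lcdef's vanish'') that \lemmaref{lem-ProductPartialSmooth} requires; what the paper's route buys in return is brevity, given that \lemmaref{lem-ProductPartialSmooth} has already been established. Two small presentation points: (i) the lemma as stated omits ``surjective,'' which must be read in from the preceding lemma, as you correctly note; (ii) the local structure theorem gives that $\varphi$ factors étale-locally as ``étale followed by projection'' rather than literally equaling a projection, but since you invoke étale invariance of $H^\bullet_{\{x\}}$ as a separate step, your argument absorbs this correctly.
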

\begin{proof} As the question is local, we can reduce to the previous lemma and \lemmaref{lem-ProductPartialSmooth}. 
\end{proof}

In a similar vein, we have the following about pre-$k$-Du Bois singularities under smooth morphisms.

\begin{lemma} Let $\varphi \colon Z_1\to Z_2$ be a smooth surjective morphism. Then $Z_1$ is pre-$k$-Du Bois if and only if $Z_2$ is.
\end{lemma}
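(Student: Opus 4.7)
The proof will mirror the argument for \lemmaref{lem-kPRSSmoothPullback}, so that the question is reduced to the case of an étale surjective morphism and the case of a projection $Z_2 \times U \to Z_2$ with $U$ smooth. Since pre-$k$-Du Bois-ness is a local condition and smooth morphisms of finite type are étale-locally isomorphic to such projections, this reduction is legitimate.

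For an étale surjective morphism $\varphi$, the pullback of mixed Hodge modules satisfies $\varphi^\ast \QQ_{Z_2}^H = \QQ_{Z_1}^H$ as bi-filtered objects (the Hodge filtration is preserved because étale morphisms have no relative de Rham contribution). Consequently $\varphi^\ast \underline{\Omega}_{Z_2}^p \cong \underline{\Omega}_{Z_1}^p$, and since $\varphi$ is faithfully flat we get $\cohH^\ell(\underline{\Omega}_{Z_1}^p) = 0$ if and only if $\cohH^\ell(\underline{\Omega}_{Z_2}^p)=0$, for every $\ell,p$.

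For a projection $\pi = \mathrm{pr}_1 \colon Z_1 = Z_2 \times U \to Z_2$ with $U$ a smooth variety of pure dimension $r$, the key input is the exterior product structure reviewed at the start of Section~\ref{sect-Preliminaries}. Writing $d_i = \dim Z_i$ with $d_1 = d_2 + r$, we have $\QQ_{Z_1}^H[d_1] = \QQ_{Z_2}^H[d_2] \boxtimes \QQ_U^H[r]$, and the convolution Hodge filtration together with the smoothness of $U$ yields a quasi-isomorphism
\[
\mathrm{Gr}^F_{-p}\mathrm{DR}_{Z_1}(\QQ_{Z_1}^H[d_1]) \;\cong\; \bigoplus_{i+j=p} \mathrm{pr}_1^\ast \mathrm{Gr}^F_{-i}\mathrm{DR}_{Z_2}(\QQ_{Z_2}^H[d_2]) \otimes_{\shO_{Z_1}} \mathrm{pr}_2^\ast \Omega_U^j[r-j].
\]
Unshifting by $[d_1-p]$ and using ${\rm Gr}^F_{-i} {\rm DR}_{Z_2}(\QQ_{Z_2}^H[d_2]) \cong \underline{\Omega}_{Z_2}^i[d_2 - i]$, this becomes
\[
\underline{\Omega}_{Z_1}^p \;\cong\; \bigoplus_{i+j=p} \mathrm{pr}_1^\ast \underline{\Omega}_{Z_2}^i \otimes_{\shO_{Z_1}} \mathrm{pr}_2^\ast \Omega_U^j.
\]
Since $\mathrm{pr}_2^\ast \Omega_U^j$ is locally free and $\mathrm{pr}_1$ is flat, taking $\cohH^\ell$ distributes over the decomposition. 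Pre-$k$-Du Bois-ness of $Z_2$ then forces each summand to vanish in positive cohomological degree whenever $p\leq k$ (since $i \leq p \leq k$), giving the same for $Z_1$; conversely, the summand with $j=0$ is $\mathrm{pr}_1^\ast \cohH^\ell(\underline{\Omega}_{Z_2}^i)$, and faithful flatness of $\mathrm{pr}_1$ recovers the vanishing on $Z_2$.

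The only subtlety is verifying the Künneth-type decomposition of $\underline{\Omega}_{Z_1}^p$ above, which is the natural place where genuine work occurs; however, this is a direct consequence of the formula for the exterior product of bi-filtered $\Dmod$-modules recalled at the start of Section~\ref{sect-Preliminaries}, combined with the fact that ${\rm Gr}^F_{-j}{\rm DR}_U(\QQ_U^H[r]) \cong \Omega_U^j[r-j]$ on the smooth factor.
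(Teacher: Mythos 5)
Your proof is correct and follows essentially the same route as the paper's: reduce to the étale-surjective and product-projection cases, handle the former via faithful flatness, and handle the latter via the Künneth-type decomposition $\underline{\Omega}_{Z_2\times U}^p \cong \bigoplus_{i+j=p}\underline{\Omega}_{Z_2}^i\boxtimes\Omega_U^j$ coming from the convolution formula for the Hodge filtration on exterior products. The only cosmetic difference is that the paper's argument passes through an explicit auxiliary closed embedding $Z_2 \subseteq W$ into a smooth variety before applying the exterior-product formula, whereas you invoke the formula directly on $Z_2\times U$; both are legitimate since ${\rm Gr}^F_\bullet{\rm DR}$ is intrinsically defined for possibly singular varieties via local embeddings.
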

\begin{proof} The question is local, so we can prove the claim in two steps. First of all, if $\varphi$ is an \'{e}tale morphism, then $\underline{\Omega}_{Z_1}^p = \varphi^*(\underline{\Omega}_{Z_2}^p)$, and because $\varphi$ is faithfully flat, we see that the claim is true in this setting.

On the other hand, if $\varphi \colon Z_1 = Z_2 \times Y \to Z_2$ is a smooth projection, where $Y$ is a smooth variety, we want to understand $\underline{\Omega}_{Z_1}^p$ in terms of $\underline{\Omega}_{Z_2}^\bullet$ and $\underline{\Omega}_Y^\bullet = \Omega_Y^\bullet$. To do this, locally embed $i \colon Z_2 \subseteq W$ with $W$ a smooth variety. Let $\pi \colon W\times Y \to W$ be the smooth projection and let $i \times {\rm id}_Y = \iota \colon Z_2\times Y \to W\times Y$ be the closed embedding.

By applying the formula \eqref{eq-BoxTimesDecomposition} with $M^\bullet = i_* \QQ_{Z_2}^H$ and $N^\bullet = \QQ_Y^H$, we get
\[ \underline{\Omega}_{Z_2 \times Y}^k = \bigoplus_{i+j =k} \underline{\Omega}_{Z_2}^i \boxtimes \underline{\Omega}_{Y}^j = \bigoplus_{i+j=k} \underline{\Omega}_{Z_2}^i \boxtimes \Omega_Y^j,\]
where the latter equality is due to the smoothness of $Y$. This follows from the fact that \[\iota_* \QQ_{Z_2 \times Y}^H = \pi^*(i_* \QQ_{Z_2}^H) = i_* \QQ_{Z_2}^H \boxtimes \QQ_Y^H\]
and the equalities
\[ \underline{\Omega}_{Z_2 \times Y}^k = {\rm Gr}^F_{-k} {\rm DR}(\QQ_{Z_2 \times Y}^H)[k],\]
\[ \underline{\Omega}_{Z_2}^i = {\rm Gr}^F_{-i} {\rm DR}(\QQ_{Z_2}^H)[i],\]
\[ \underline{\Omega}_{Y}^j = {\rm Gr}^F_{-j} {\rm DR}(\QQ_{Y}^H)[j].\]

As $\Omega_Y^j$ is a sheaf, we see that $\underline{\Omega}_{Z_2\times Y}^\ell$ is a sheaf for all $\ell \leq k$ if and only if $\underline{\Omega}_{Z_2}^\ell$ is a sheaf for all $\ell \leq k$, proving the claim by definition of pre-$k$-Du Bois.
\end{proof}

We immediately obtain the following corollary.

\begin{corollary} Let $\varphi \colon Z_1 \to Z_2$ be a smooth surjective morphism between normal varieties. Then $Z_2$ is pre-$k$-rational if and only if $Z_1$ is pre-$k$-rational.
\end{corollary}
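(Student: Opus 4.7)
The plan is to combine the two immediately preceding results together with the characterization of the pre-$k$-rational condition for normal varieties given in \remarkref{rmk-prek}. Specifically, for a normal variety $Z$, being pre-$k$-rational is equivalent to being pre-$k$-Du Bois \emph{and} having $\phi^p \colon \underline{\Omega}_Z^p \to \mathbb D_Z(\underline{\Omega}_Z^{d-p})[-d]$ a quasi-isomorphism for all $p \leq k$; but the latter condition is, by definition, exactly $\HRH(Z) \geq k$. So the pre-$k$-rational property is, for normal varieties, just the conjunction of two conditions that we already know are preserved by smooth morphisms.

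Concretely, the argument will proceed in three short steps. First, I would apply the preceding lemma to conclude that $Z_1$ is pre-$k$-Du Bois if and only if $Z_2$ is pre-$k$-Du Bois. Second, I would invoke \lemmaref{lem-kPRSSmoothPullback} to conclude $\HRH(Z_1) = \HRH(Z_2)$, so in particular $\HRH(Z_1) \geq k$ if and only if $\HRH(Z_2) \geq k$. Third, I would apply \remarkref{rmk-prek} on both sides (using the hypothesis that both $Z_1$ and $Z_2$ are normal) to translate each of the conditions ``$Z_i$ is pre-$k$-rational'' into the pair ``$Z_i$ is pre-$k$-Du Bois and $\HRH(Z_i) \geq k$'', and then conclude by comparing. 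There is essentially no obstacle: the normality hypothesis guarantees that the characterization of pre-$k$-rational in terms of $\phi^p$ being a quasi-isomorphism applies on both sides, and smoothness of $\varphi$ is what makes the two base results (preservation of pre-$k$-Du Bois and of $\HRH$) available.

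If one wanted to avoid appealing to the characterization via $\HRH$, an alternative (but more redundant) route would be to mimic the proof of the previous lemma: reduce locally to the two cases of an \'{e}tale cover and of a projection $Z_2 \times Y \to Z_2$ with $Y$ smooth, and in the product case use the K\"{u}nneth-type decomposition $\mathbb D_{Z_2 \times Y}(\underline{\Omega}_{Z_2 \times Y}^{d_1-p}) \cong \bigoplus_{i+j=d_1-p}\mathbb D_{Z_2}(\underline{\Omega}_{Z_2}^{d_2 - i}) \boxtimes \mathbb D_Y(\Omega_Y^{\dim Y - j})$ obtained from the exchange formula $\mathbf D_{Z_2\times Y}(M\boxtimes N) = \mathbf D_{Z_2}(M)\boxtimes \mathbf D_Y(N)$ to match $\phi^p$ on $Z_1 = Z_2\times Y$ with the tensor of $\phi^i_{Z_2}$ and the isomorphism $\phi^j_Y$. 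But the first route is cleaner and more conceptual.
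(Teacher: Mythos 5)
Your proposal is exactly the argument the paper has in mind: the corollary is stated as an immediate consequence of the two preceding lemmas (preservation of pre-$k$-Du Bois and of $\HRH$ under smooth morphisms), combined with the characterization in \remarkref{rmk-prek} of pre-$k$-rationality for normal varieties as the conjunction of pre-$k$-Du Bois and $\HRH(Z)\geq k$. Your three-step write-up is correct and matches the paper's intent.
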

    
\subsection{Application to partial Poincar\'{e} duality}\label{sec-pd}
This short subsection proves a partial Poincar\'{e} duality result under the assumption $\HRH(Z)\geq k$. Our goal is to understand how the condition that $F_{k-d} \psi_Z$ is a quasi-isomorphism behaves under the direct image $(a_Z)_*$.

\begin{proposition} \label{prop-DirectImage} Let $f\colon X \to Y$ be a morphism of embeddable algebraic varieties. Let $\varphi \colon M^\bullet \to N^\bullet$ be such that $F_p \varphi$ is a quasi-isomorphism. Then $F_p f_*\varphi$ is a quasi-isomorphism.
\end{proposition}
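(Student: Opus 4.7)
The plan is to pass to the cone $C^\bullet = {\rm cone}(\varphi) \in D^b({\rm MHM}(X))$ and translate both hypothesis and conclusion into statements about associated graded de Rham complexes. By the cone exact triangle, $F_p\varphi$ is a quasi-isomorphism if and only if $F_p \cohH^i(C^\bullet)=0$ for all $i$, which by \lemmaref{lem-technical} is equivalent to the acyclicity of ${\rm Gr}^F_\ell {\rm DR}_X(C^\bullet)$ for every $\ell\leq p$. The same reformulation, applied on $Y$ to $f_*\varphi$ via \corollaryref{cor-technical}, reduces the theorem to showing that ${\rm Gr}^F_\ell {\rm DR}_Y(f_* C^\bullet)$ is acyclic for every $\ell\leq p$.

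The crux is then to establish the compatibility
\[ {\rm Gr}^F_\ell {\rm DR}_Y\bigl(f_*(C^\bullet)\bigr) \;\simeq\; Rf_*\,{\rm Gr}^F_\ell {\rm DR}_X(C^\bullet) \]
for arbitrary morphisms between embeddable varieties, not only the proper-between-smooth case of \propositionref{prop-GrDRProps}. Once this is in hand, acyclicity of ${\rm Gr}^F_\ell {\rm DR}_X(C^\bullet)$ for $\ell \leq p$ propagates through $Rf_*$, yielding the desired acyclicity on $Y$.

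To obtain this compatibility, I would factor $f$ through its graph as $X \xrightarrow{\Gamma_f} X \times Y \xrightarrow{p} Y$, and work locally on $Y$ so that closed embeddings $X \hookrightarrow X'$ and $Y \hookrightarrow Y'$ into smooth varieties are available. Then $\Gamma_f$ becomes a proper closed embedding into the smooth variety $X'\times Y'$, and $p$ extends to the smooth projection $X'\times Y'\to Y'$. For the graph factor, the compatibility is essentially by definition of the direct image along a closed embedding, combined with \propositionref{prop-GrDRProps}. For the projection factor, it is Saito's construction of $p_*$ on mixed Hodge modules (via the relative Koszul resolution), which by design refines $Rp_*$ on the associated graded de Rham level.

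The main obstacle is the non-proper part of the projection $p$: one must verify that Saito's ${\rm MHM}$-theoretic $p_*$ still matches $Rp_*$ on Gr-DR without properness. This is handled by reducing, via a Nagata-type compactification, to the proper case recorded in \propositionref{prop-GrDRProps}, together with the fact that for an open embedding $j$ of smooth varieties the identification ${\rm Gr}^F_\ell {\rm DR}(j_*(-)) \simeq Rj_*\,{\rm Gr}^F_\ell {\rm DR}(-)$ holds by construction. With this technical point secured, the cone argument of the first paragraph closes the proof.
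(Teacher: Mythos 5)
Your reduction to the cone $C^\bullet$ and the translation of both sides into vanishing of ${\rm Gr}^F_\ell{\rm DR}$ for $\ell \leq p$ (via \lemmaref{lem-technical} and \corollaryref{cor-technical}) is exactly how the paper begins; where the argument breaks is the purported compatibility
\[ {\rm Gr}^F_\ell {\rm DR}_Y\bigl(f_*(C^\bullet)\bigr) \;\simeq\; Rf_*\,{\rm Gr}^F_\ell {\rm DR}_X(C^\bullet) \]
for a general morphism of embeddable varieties. This isomorphism is \emph{false} whenever $f$ is not proper, and in particular the ``fact'' you invoke for an open embedding $j$ of smooth varieties does not hold. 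Take $j\colon \mathbb{G}_m \hookrightarrow \mathbb{A}^1$ and $M = \QQ^H_{\mathbb{G}_m}[1]$. Then ${\rm Gr}^F_{-1}{\rm DR}_{\mathbb{A}^1}(j_*M)$ is the coherent sheaf $\omega_{\mathbb{A}^1}(\{0\})$ (placed in degree $0$), because $F_{-1}$ of the localization $\shO_{\mathbb{A}^1}(*0)$ is $\shO_{\mathbb{A}^1}\cdot t^{-1}$. On the other hand $Rj_*\,{\rm Gr}^F_{-1}{\rm DR}_{\mathbb{G}_m}(M) = j_*\omega_{\mathbb{G}_m} = \omega_{\mathbb{A}^1}\otimes \shO_{\mathbb{A}^1}(*0)$, which is not coherent. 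So the two sides disagree, and the reduction via Nagata compactification plus ``open embeddings are fine by construction'' cannot be repaired.

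What is actually needed is much weaker than an isomorphism: it suffices to know that acyclicity of ${\rm Gr}^F_\ell{\rm DR}_X(C^\bullet)$ for $\ell\leq p$ propagates to acyclicity of ${\rm Gr}^F_\ell{\rm DR}_Y(f_*C^\bullet)$ for $\ell\leq p$ — equivalently, that the lowest index of the Hodge filtration does not decrease under $f_*$, i.e.\ $p(f_*C^\bullet)\geq p(C^\bullet)$. This one-sided vanishing is consistent with the counterexample above (where both sides vanish for $\ell\leq -2$ even though they differ at $\ell=-1$), but it is not automatic and requires a genuine argument: in the open-embedding case it comes from the explicit description of the Hodge filtration on $j_*$ via the $V$-filtration (cf.\ \propositionref{prop-VFilt}), not from a naive commutation with $Rj_*$. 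The paper's proof cites Park's Lemma~3.4 for precisely this vanishing statement, then applies \lemmaref{lem-technical}, exactness of $f_*$, and strictness to conclude; you would need to replace your claimed isomorphism by a proof of this weaker estimate.
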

\begin{proof} By \cite{Park}*{Lem. 3.4}, if $C^\bullet$ is the cone of $\varphi$, then our assumption implies
\[ {\rm Gr}^F_\ell{\rm DR}_Y(f_* C^\bullet) = 0 \text{ for all } \ell \leq p,\]
and so, by \lemmaref{lem-technical}, we get
\[ F_p f_* C^\bullet = 0.\]

Finally, as $f_*$ is an exact functor between triangulated categories, we have the exact triangle
\[ f_* M^\bullet \to f_* N^\bullet \to f_* C^\bullet \xrightarrow[]{+1},\]
and the result follows by looking at the long exact sequence in cohomology, using strictness of morphisms between Hodge modules.
\end{proof}

We can now prove the main result concerning Poincar\'{e} duality. We state the result using decreasing Hodge filtrations, as is the convention for the mixed Hodge structure on singular cohomology.

\begin{theorem}\label{thm-PD} Let $Z$ be an embeddable complex algebraic variety and assume $\HRH(Z)\geq k$. Then for any $i \in \ZZ$, the natural map
\[ H^{d-i}(Z) \to {\rm IH}^{d-i}(Z) \to (H^{d+i}_c(Z)^\vee)(-d)\]
induces isomorphisms
\[ F^{d-k} H^{d-i}(Z) \to F^{d-k} {\rm IH}^{d-i}(Z) \to F^{-k} H^{d+i}_c(Z)^{\vee}.\]

If $Z$ is proper with isolated non-rational homology manifold locus, then the converse holds.
\end{theorem}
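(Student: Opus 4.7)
The plan is to push the specialization of the Poincar\'{e} duality morphism $\psi_Z$ forward to a point and read off the filtered cohomology statement, using \propositionref{prop-DirectImage} to guarantee that the Hodge filtration up to the appropriate level is preserved.

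First I would factor $\psi_Z$ through $\IC_Z^H$. Namely, the morphisms $\gamma_Z \colon \cohH^0(\QQ_Z^H[d]) \onto \IC_Z^H$ and $\gamma_Z^\vee \colon \IC_Z^H \into \cohH^0(\mathbf D_Z(\QQ_Z^H[d])(-d))$ can be composed with the canonical truncation maps to obtain morphisms $\tilde{\gamma}_Z \colon \QQ_Z^H[d] \to \IC_Z^H$ and $\tilde{\gamma}_Z^\vee \colon \IC_Z^H \to \mathbf D_Z(\QQ_Z^H[d])(-d)$ in $D^b(\MHM(Z))$ whose composition, by the characterization of $\psi_Z$ recalled in \S\ref{subsect-PDHS} (see \remarkref{rmk-uniqueness}), is $\psi_Z$ itself. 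As already observed before \remarkref{rmk-compareStark}, $\HRH(Z)\ge k$ is equivalent to $F_{k-d}\psi_Z$ being a quasi-isomorphism, and in the course of proving \theoremref{thmmain} we saw that this forces the vanishings $F_{k-d}\cohH^{-j}(\QQ_Z^H[d]) = 0 = F_{k-d}\cohH^{j}(\mathbf D_Z(\QQ_Z^H[d])(-d))$ for $j>0$ together with both $F_{k-d}\gamma_Z$ and $F_{k-d}\gamma_Z^\vee$ being isomorphisms of Hodge modules. Combining these, both $F_{k-d}\tilde{\gamma}_Z$ and $F_{k-d}\tilde{\gamma}_Z^\vee$ are quasi-isomorphisms in $D^b(\MHM(Z))$.

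Next I would apply the constant map $a_Z \colon Z \to \pt$. By \propositionref{prop-DirectImage}, both $F_{k-d}(a_Z)_*\tilde{\gamma}_Z$ and $F_{k-d}(a_Z)_*\tilde{\gamma}_Z^\vee$ are quasi-isomorphisms in $D^b(\MHS)$, and their composition is $F_{k-d}(a_Z)_*\psi_Z$. By definition of the MHS on $H^\bullet(Z)$, $\IH^\bullet(Z)$ and $H^\bullet_c(Z)$, and using the compatibility $(a_Z)_*\mathbf D_Z = \mathbf D_\pt (a_Z)_!$ from the six-functor formalism together with $\cohH^{-i}\mathbf D_\pt K = (\cohH^i K)^\vee$ on a point, taking $\cohH^{-i}$ yields the chain of maps
\[ H^{d-i}(Z) \to \IH^{d-i}(Z) \to H^{d+i}_c(Z)^\vee(-d) \]
whose first and second arrows, restricted to $F_{k-d}$, are isomorphisms.

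Finally, I would translate the filtration indexing: on a mixed Hodge structure viewed as a Hodge module on a point, the paper's increasing filtration $F_\bullet$ agrees with the classical decreasing filtration $F^\bullet$ via $F_p = F^{-p}$, and Tate twist satisfies $F^p(V(-d)) = F^{p-d}V$. Thus $F_{k-d}$ on $H^{d-i}(Z)$ and $\IH^{d-i}(Z)$ corresponds to $F^{d-k}$, while $F_{k-d}$ on $H^{d+i}_c(Z)^\vee(-d)$ corresponds to $F^{d-k-(-d)-d+d}\dots = F^{-k}$ on $H^{d+i}_c(Z)^\vee$. Reading the conclusion of the previous step through this dictionary gives exactly the isomorphisms in the statement.

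The main technical point, and the only place anything nontrivial happens, is \propositionref{prop-DirectImage}: this is what lets $\HRH(Z)\ge k$, which is a statement about $\psi_Z$ on $Z$, propagate to a statement about the direct image $(a_Z)_*\psi_Z$ at the $F_{k-d}$ level. The remaining work is bookkeeping about duality and the two filtration conventions, which just needs to be done carefully.
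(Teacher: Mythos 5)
Your argument is essentially identical to the paper's proof of \theoremref{thm-PD}: factor $\psi_Z$ through $\IC_Z^H$, observe that $\HRH(Z)\geq k$ makes both arrows $\QQ_Z^H[d]\to \IC_Z^H \to \mathbf D_Z(\QQ_Z^H[d])(-d)$ into $F_{k-d}$-level quasi-isomorphisms, push forward along $a_Z$ using \propositionref{prop-DirectImage}, take $\cohH^{-i}$, and translate filtration conventions. The only quibble is that the arithmetic in your filtration dictionary for the Tate twist is garbled (the correct computation is simply $F_{k-d}(V(-d)) = F_k(V) = F^{-k}(V)$, using $F_p(V(j)) = F_{p-j}(V)$), but the conclusion you draw from it is right.
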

\begin{proof} This follows by applying $\cohH^{-i} (a_Z)_*$ to the map $\psi_Z$, where $a_Z \colon Z \to {\rm pt}$ is the constant map. 

 After taking some embedding $i\colon Z \to X$ into a smooth variety, the condition ${\HRH}(Z) \geq k$ implies that the maps
\[ F_{k-d} \QQ_Z^H[d] \to F_{k-d} {\rm IC}_Z^H \to F_{k-d}\mathbf D_Z(\QQ_Z^H[d])(-d),\]
are quasi-isomorphisms. Applying $(a_Z)_*$ and using \propositionref{prop-DirectImage}, we get that
\[ F_{k-d}(a_Z)_*\QQ_Z^H[d] \to F_{k-d} (a_Z)_*{\rm IC}_Z^H \to F_{k-d}(a_Z)_*\mathbf D_Z(\QQ_Z^H[d])(-d)\]
are quasi-isomorphisms, too. By taking $\cohH^{-i}$, we get the desired claim.

For the converse, assume $Z$ is proper with isolated non-rational homology manifold locus. Then we have the exact triangle
\[ \QQ_Z^H[d] \to \mathbf D_Z(\QQ_Z^H[d])(-d) \to i_* S \xrightarrow[]{+1},\]
where $i\colon Z_{\rm nRS} \to Z$ is the inclusion of the singular locus. Note that ${\rm HRH}(Z) \geq k$ is equivalent to ${\rm Gr}^F_{-p} {\rm DR}(i_* S) = 0$ for all $p \leq k$. By properness of $i$, this is equivalent to $i_* {\rm Gr}^F_{-p}{\rm DR}(S) = 0$.

On the other hand, if we apply $(a_Z)_*$ to the exact triangle, we get
\[ (a_Z)_*\QQ_Z^H[d] \to (a_Z)_*\mathbf D_Z(\QQ_Z^H[d])(-d) \to S \xrightarrow[]{+1} \]
in $D^b({\rm MHS})$. By looking at the associated long exact sequence of mixed Hodge structures, and using strictness of the Hodge filtration, the isomorphism in the theorem statement is equivalent to the vanishing ${\rm Gr}^F_{-p} S = {\rm Gr}^F_{-p} {\rm DR}(S) = 0$ for all $p \leq k$, which proves the claim.
\end{proof}

\subsection{Generic local cohomological defect} \label{sec-GenLcdef} In this section, we provide a generalization of \remarkref{rmk-PPProperties}(4)(c) in the embedded setting. The key idea is that one can improve the important codimension bound on the non-rational homology manifold locus in \cite{PPLefschetz} by incorporating the ``generic'' local cohomological defect, which we introduce next.

Let $Z\subseteq X$ be an embedding of the purely $d$-dimensional variety $Z$ into a smooth connected variety $X$.

Recall that $K_Z^\bullet$ is the RHM defect object, which lies in the exact triangle
\[ K_Z^\bullet \to \QQ_Z^H[d] \to {\rm IC}_Z^H \xrightarrow[]{+1}.\]

Let $\mathcal S =\{S_\alpha\}_{\alpha \in I}$ be any Whitney stratification of $X$ such that $Z$ is a union of strata. Note that $K_Z^\bullet$ is constructible with respect to any Whitney stratification of $Z$, hence, with respect to $\mathcal S$.

In particular, the perverse cohomology sheaves of $\QQ_Z[d]$ have support equal to some unions of strata in $\mathcal S$, and so the function
\[ x \mapsto {\rm lcdef}_x(Z)\]
is constructible with respect to this stratification.

\begin{definition}\label{deflcdefgen}
Let $\mathcal S =\{S_\alpha\}_{\alpha \in I}$ be a Whitney stratification as above and 
${\rm lcdef}_{S_\alpha}(Z)$
the value of ${\rm lcdef}$ on $S_\alpha$. We denote
\[{\rm lcdef}_{\rm gen}(Z) = \max\bigl(\{0\}\cup\{{\rm lcdef}_{S_\alpha}(Z) \mid \dim S_\alpha = \dim Z_{\rm nRS}\}\bigr).\]
\end{definition}

This invariant can be detected through local cohomology in our embedded setting.

\begin{lemma} Let $Z \subseteq X$ be an embedding into a smooth variety $X$ with ${\rm codim}_X(Z) = c$. Let $\{S_\alpha\}$ be a Whitney stratification of $X$ as above. Then
\[ {\rm lcdef}_{S_\alpha}(Z) = \max\bigl(\{0\} \cup \{ i \mid S_\alpha \subseteq {\rm Supp} \, \cohH^{c+i}_Z(\shO_X)\}\bigr).\]
\end{lemma}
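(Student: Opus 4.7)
The plan is to identify, for $x\in S_\alpha$, the value ${\rm lcdef}_x(Z)$ with the maximum $j$ such that $x\in \Supp(\cohH^{c+j}_Z(\shO_X))$, and then to use $\mathcal S$-constructibility of these supports to upgrade ``$x\in \Supp$'' to ``$S_\alpha\subseteq \Supp$''. Together these give the stated equality and also explain why ${\rm lcdef}$ is constant on each stratum.

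First I would unwind the definition ${\rm lcdef}_x(Z) = \min_{x\in U}{\rm lcdef}(U)$. For any Zariski open $x\in U\subseteq Z$, set $X_U := X\setminus(Z\setminus U)$, a Zariski open subset of $X$ into which $U$ embeds as a codimension-$c$ closed subvariety. Since local cohomology commutes with restriction to opens, $\cohH^{c+j}_U(\shO_{X_U}) = \cohH^{c+j}_Z(\shO_X)\vert_{X_U}$, whence
\[ {\rm lcdef}(U) = \max\{j\mid \Supp(\cohH^{c+j}_Z(\shO_X))\cap U\neq\emptyset\}. \]
As $U$ shrinks around $x$ and each support is closed in $Z$, taking the minimum collapses this to
\[ {\rm lcdef}_x(Z) = \max\{j\mid x\in \Supp(\cohH^{c+j}_Z(\shO_X))\}. \]

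Next I would verify that each $\Supp\bigl(\cohH^{c+j}_Z(\shO_X)\bigr)$ is a union of strata closures $\overline{S_\beta}$. Using the identification $\cohH^{c+j}_Z(\shO_X) = \cohH^{c+j}(i_*i^!\QQ_X^H[n])$ from the proof of \theoremref{thmmain}, together with the duality formula expressing $i^!\QQ_X^H[n]$ in terms of $\mathbf D_Z(\QQ_Z^H[d])$, one sees that the local cohomology MHMs are reconstructible from $\QQ_Z^H[d]$, hence from $\mathcal K_Z^\bullet$ and $\mathrm{IC}_Z^H$. Possibly after a harmless refinement of $\mathcal S$ so that it is also compatible with $\mathrm{IC}_Z^H$ (which does not affect any of the other conclusions about $\mathcal S$), the characteristic varieties of all the modules $\cohH^{c+j}_Z(\shO_X)$ lie in $\bigcup_\alpha T^*_{S_\alpha}X$, so each of their supports is a union of closures $\overline{S_\beta}$. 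Consequently, for $x\in S_\alpha$, the condition $x\in\Supp(\cohH^{c+j}_Z(\shO_X))$ is equivalent to $S_\alpha\subseteq\Supp(\cohH^{c+j}_Z(\shO_X))$, and combining with the first step gives the claimed equality.

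The only real content is the $\mathcal S$-constructibility of the local cohomology modules in the second step — the point being that the compatibility of $\mathcal S$ with $\mathcal K_Z^\bullet$ postulated ``as above'' needs to be propagated (possibly after a trivial refinement) to the modules $\cohH^{c+j}_Z(\shO_X)$ via duality. Everything else is bookkeeping with the definition of ${\rm lcdef}_x(Z)$, the behavior of local cohomology under restriction to opens, and the observation that the support of an $\mathcal S$-constructible MHM is a union of strata closures.
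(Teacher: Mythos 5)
Your proof is correct and fills in the chain of identifications the paper treats as ``immediate by choice of stratification,'' so it is essentially the same argument. One small simplification: no refinement of $\mathcal S$ is needed, since ${\rm IC}^H_Z$ is automatically $\mathcal S$-constructible for any Whitney stratification of $X$ in which $Z$ is a union of strata, so the $\mathcal S$-constructibility of the modules $\cohH^{c+j}_Z(\shO_X)$ already follows from that of $\mathcal K_Z^\bullet$ together with duality.
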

\begin{proof} This is immediate by choice of stratification.
\end{proof}

This invariant can also be detected without mention of a Whitney stratification.

\begin{lemma}\label{lcdefgen} Let $Z \subseteq X$ be an embedding into a smooth variety $X$, with ${\rm codim}_X(Z) = c$. For $i\geq 0$, define
\[ d(i) = \begin{cases} \dim {\rm Supp} \, \cohH^{c+i}_Z(\shO_X) & i > 0 \\ \dim {\rm Supp}\, \cohH^c_Z(\shO_X)/{\rm IC}_Z^H & i = 0 \end{cases}.\]
Then \[{\rm lcdef}_{\rm gen}(Z) = \max\bigl(\{0\} \cup \{ i \mid d(i) \geq d(j) \text{ for all } j\geq 0\}\bigr).\]
\end{lemma}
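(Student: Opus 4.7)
The plan is to match both sides to an intrinsic combinatorial quantity of the function $i \mapsto d(i)$. Write $m := \max_{i\geq 0} d(i)$ and $i_0 := \max\{i \geq 0 \mid d(i) = m\}$, so the right-hand side equals $i_0$; I will show that ${\rm lcdef}_{\rm gen}(Z) = i_0$.

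The first step is to verify that each $V_i$ — meaning $\Supp\, \cohH^{c+i}_Z(\shO_X)$ for $i>0$ and $\Supp(\cohH^c_Z(\shO_X)/{\rm IC}_Z^H)$ for $i=0$ — is a union of closures of strata of $\mathcal{S}$. Applying $i_*$ to the exact triangle ${\rm IC}_Z^H \to \mathbf{D}_Z(\QQ_Z^H[d])(-d) \to \mathbf{D}_Z(K_Z^\bullet)(-d) \xrightarrow{+1}$ and taking cohomology sheaves shows that, up to Tate twist and reindexing, each $V_i$ with $i > 0$ coincides with the support of a cohomology sheaf of $i_*\mathbf{D}_Z K_Z^\bullet$, while $V_0$ coincides with the support of $\cohH^0(i_*\mathbf{D}_Z K_Z^\bullet)$, realized as the cokernel of $i_*\gamma_Z^\vee$. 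Since $\mathrm{Ch}(\mathcal{K}_Z^\bullet) \subseteq \bigcup_{\alpha} T^*_{S_\alpha}(X)$ and duality preserves characteristic varieties, each $V_i$ is a union of closures of strata. In particular, $Z_{\rm nRS} = \bigcup_{i\geq 0} V_i$, and so $\dim Z_{\rm nRS} = m$.

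To prove ${\rm lcdef}_{\rm gen}(Z) \geq i_0$, I would pick an irreducible component $W$ of $V_{i_0}$ with $\dim W = m$ and let $S_{\alpha_0}$ be its open stratum. Then $\dim S_{\alpha_0} = m$ and $S_{\alpha_0} \subseteq V_{i_0}$; for $i > i_0$, the inequality $\dim V_i = d(i) < m$ forces $S_{\alpha_0} \not\subseteq V_i$. By the previous lemma, ${\rm lcdef}_{S_{\alpha_0}}(Z) = i_0$. For the reverse inequality, let $S_\alpha$ be any stratum with $\dim S_\alpha = m$; if $S_\alpha \subseteq V_j$ then $d(j) \geq m$, which forces $d(j) = m$ and hence $j \leq i_0$. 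Thus ${\rm lcdef}_{S_\alpha}(Z) \leq i_0$, and combining gives ${\rm lcdef}_{\rm gen}(Z) = i_0$.

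The main obstacle is the first step: one must carefully identify the sets $V_i$, defined in terms of local cohomology of $\shO_X$, with supports of cohomology sheaves of objects whose characteristic variety is controlled by the chosen Whitney stratification $\mathcal{S}$. Once this identification is in place, the rest of the argument is an elementary dimension count that invokes only the previous lemma.
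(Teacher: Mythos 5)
Your proof is correct and follows essentially the same route as the paper's: identify $Z_{\rm nRS}$ as the union of the sets $V_i$ via the exact triangle built from $\gamma_Z^\vee$, note that the supports are unions of strata by the choice of Whitney stratification, and then combine the preceding lemma (expressing ${\rm lcdef}_{S_\alpha}$ in terms of stratum containment) with an elementary dimension count. The paper states the proof very tersely — ``immediate by the previous lemma and the fact that the support of $Z_{\rm nRS}$ is a union of strata'' — whereas you spell out the identification of the $V_i$ with supports of cohomology sheaves of $i_*\mathbf D_Z(K_Z^\bullet)$ and the two inequalities, which is the same content made explicit.
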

\begin{proof} By definition, we have
\[ Z_{\rm nRS} = {\rm Supp}(K_Z^\bullet) = {\rm Supp}(\mathbf D_Z(K_Z^\bullet)) = {\rm Supp}(\cohH^c_Z(\shO_X)/{\rm IC}_Z^H)\cup \bigcup_{i > 0} {\rm Supp} \cohH^{c+i}_Z(\shO_X).\]

Note that, for any $i$ which satisfies $d(i) \geq d(j)$ for all $j\geq 0$, we have $d(i) = \dim Z_{\rm nRS}$. Thus, the claim is immediate by the previous lemma and the fact that the support of $Z_{\rm nRS}$ is a union of strata.
\end{proof}

Now, we show that the value ${\rm lcdef}_x(Z)$ for $x\in S_\alpha$ is unchanged upon taking a normal slice.

\begin{proposition} \label{normalslicelcdef} Let $\{S_\alpha\}$ be a Whitney stratification of $X$ as above and fix $x\in S_\alpha$. Let $T_\alpha \subseteq X$ be a normal slice through $x$, meaning a smooth subvariety of dimension $\dim X - \dim S_\alpha$ such that $T_\alpha \cap S_\alpha = \{x\}$ is a transverse intersection. Then
\[{\rm lcdef}_x(Z) = {\rm lcdef}_x(Z\cap T_\alpha).\]
\end{proposition}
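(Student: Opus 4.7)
The strategy is to reduce the equality to non-characteristic base change for the local cohomology $\Dmod_X$-modules of $\shO_X$ along $Z$. Fix a closed embedding $Z\subseteq X$ of codimension $c$ and set $s=\dim S_\alpha$. By transversality, $Z\cap T_\alpha$ also has codimension $c$ inside $T_\alpha$ near $x$, so the local cohomological defect near $x$ can be computed from either embedding.

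The first step is to verify that $\iota\colon T_\alpha\hookrightarrow X$ is non-characteristic for each $\cohH^{c+j}_Z(\shO_X)$ in a neighborhood of $x$. The stratification $\{S_\beta\}$ was chosen so that $\Ch(\mathcal K_Z^\bullet)\subseteq \bigcup_\beta T^*_{S_\beta}X$; since the local cohomology modules fit into exact triangles relating them to $\mathcal K_Z^\bullet$, $\QQ_Z^H[d]$, and ${\rm IC}_Z^H$, the same containment holds for each of their characteristic varieties. The defining property of a normal slice in a Whitney stratification is that $T_\alpha$ meets every stratum $S_\beta$ with $x\in\overline{S_\beta}$ transversely in a neighborhood of $x$, which is precisely the non-characteristic condition.

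The central technical input is then the base change identity
\[ \iota^*\cohH^{c+j}_Z(\shO_X)\cong \cohH^{c+j}_{Z\cap T_\alpha}(\shO_{T_\alpha}),\]
valid in a neighborhood of $x$. This follows from the $\Dmod$-module identity $R\underline{\Gamma}_Z\shO_X\cong i_+i^!\shO_X$ together with the base change $\iota^*i_+\cong i'_+\iota_Z^*$ and the vanishing of higher Tors for non-characteristic pullback; equivalently one can argue via a local \v{C}ech resolution for local cohomology built from generators of $\mathcal J_Z$, which restrict to generators of $\mathcal J_{Z\cap T_\alpha}$.

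Finally, by constructibility with respect to $\{S_\beta\}$, the regular holonomic $\Dmod_X$-module $\cohH^{c+j}_Z(\shO_X)$ is determined on each stratum $S_\beta$ by a local system; for $\beta$ with $x\in\overline{S_\beta}$ the transverse slice $S_\beta\cap T_\alpha$ is nonempty of dimension $\dim S_\beta-s\geq 0$ near $x$, and the restriction of a local system to such a slice is nonzero iff the original is. Unwinding
\[ {\rm lcdef}_x(Z)=\max_{\beta:\,x\in\overline{S_\beta}}\max\bigl\{j : \cohH^{c+j}_Z(\shO_X)|_{S_\beta}\neq 0\bigr\}\]
and the analogous formula for ${\rm lcdef}_x(Z\cap T_\alpha)$ with strata $S_\beta\cap T_\alpha$, the non-characteristic identity above matches the two maxima term by term. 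The most delicate point is confirming the non-characteristic restriction formula for local cohomology; granting that, the rest is a bookkeeping of supports under transversality.
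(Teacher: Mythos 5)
Your proposal is correct and rests on the same key idea as the paper's proof: non-characteristic restriction along the normal slice $T_\alpha$. The route differs modestly. The paper first shrinks $X$ so that $S_\alpha$ becomes a minimal stratum, after which ${\rm lcdef}_x(Z)={\rm lcdef}(Z)$ globally on the shrunk $X$, and then applies the non-characteristic degeneration of the spectral sequence $\cohH^i\iota^*\cohH^j K_Z^\bullet\Rightarrow\cohH^{i+j}\iota^*K_Z^\bullet$ directly to the RHM defect object $K_Z^\bullet$, together with the identity $\iota^*K_Z^\bullet = K_{Z\cap T_\alpha}^\bullet[\dim S_\alpha]$. You instead phrase everything in terms of the local cohomology modules $\cohH^{c+j}_Z(\shO_X)$ and match the two max-formulas stratum by stratum, invoking the local cone structure of the Whitney stratification to guarantee $S_\beta\cap T_\alpha\neq\emptyset$ near $x$ for every stratum with $x\in\overline{S_\beta}$. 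The paper's minimal-stratum reduction sidesteps that bookkeeping; your version makes the role of the local cohomology modules more transparent. Both are valid. The one step you should spell out is why the chosen stratification is automatically adapted to the $\cohH^{c+j}_Z(\shO_X)$ and not merely to $\mathcal K_Z^\bullet$: the cleanest justification is that for $j\geq 1$ one has $\cohH^{c+j}_Z(\shO_X)\cong\mathbf D_X\bigl(\cohH^{-j}\mathcal K_Z^\bullet\bigr)$ up to Tate twist, and holonomic duality preserves characteristic varieties, so the hypothesis on $\Ch(\mathcal K_Z^\bullet)$ transfers directly; the indirect exact-triangle route you sketch also works but additionally requires noting that ${\rm IC}_Z$ is constructible with respect to any Whitney stratification of $X$ in which $Z$ is a union of strata.
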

\begin{proof} By replacing $X$ with a neighborhood of $x$, we can assume the stratification $\{S_\alpha\}$ is finite. By further replacing $X$ by $X \setminus \bigcup_{\beta \in B} \overline{S_\beta}$, where $B = \{\gamma \mid S_\alpha \not \subseteq \overline{S_\gamma}\}$, we can assume $S_\alpha$ is a minimal stratum in the sense that $S_\alpha \subseteq \overline{S_\gamma}$ for all $\gamma$. As the support of each local cohomology is a union of strata and closed, we see then that after this restriction, we have
\[ {\rm lcdef}_x(Z) = {\rm lcdef}_{S_\alpha}(Z) = {\rm lcdef}(Z).\]

In this case, Whitney's condition (a) implies that the normal slice $T_\alpha$ has transverse intersection with all strata. If $\iota \colon T_\alpha \to X$ is the closed embedding, this implies that $\iota$ is non-characteristic with respect to $\cohH^j(K_Z^\bullet)$ for all $j\in \ZZ$. 

Thus, the spectral sequence
\[ E_2^{i,j} = \cohH^i \iota^* \cohH^j(K_Z^\bullet) \implies \cohH^{i+j} \iota^*(K_Z^\bullet)\]
degenerates at $E_2$, because the only non-zero terms must have $i = -\dim S_\alpha$, the codimension of the embedding $\iota$. This gives equality
\[ \cohH^{j-\dim S_\alpha} \iota^*(K_Z^\bullet) = \cohH^j K_Z^\bullet \otimes_{\shO_X} \shO_{T_\alpha}.\]

Again using that $\iota$ is non-characteristic, we have
\[ \iota^*(K_Z^\bullet) = K_{Z\cap T_\alpha}^\bullet[\dim S_\alpha].\]

Finally, using that ${\rm lcdef}(Z) = \max\{i \mid \cohH^{-i} K_Z^\bullet \neq 0\}$ and the same formula for ${\rm lcdef}(Z\cap T_\alpha)$, we get the desired equality.
\end{proof}

Putting these together, we can show that the bound ${\rm codim}_Z(Z_{\rm nRS}) \geq 2\HRH(Z) +3$ can be improved if one knows the ``generic'' local cohomological defect. This method of proof is inspired by \cite{SaitoMicrolocal}*{Rmk. 2.11}.

\begin{proposition}\label{prop-ppbound} Let $Z$ be a purely $d$-dimensional complex algebraic variety and let $\{S_\alpha\}$ be a Whitney stratification of $Z$. Then for all $x\in S_\alpha$, we have
\[ {\rm lcdef}_{S_\alpha}(Z) \leq \max\{{\rm codim}_Z(S_\alpha) - 2 \HRH_x(Z) -3 , 0\}.\]
In particular, we have
\[ {\rm lcdef}_{\rm gen}(Z) \leq \max\{{\rm codim}_Z(Z_{\rm nRS})-2\HRH(Z) -3,0\}.\]
\end{proposition}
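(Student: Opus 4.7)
The plan is to reduce to the known global bound from \remarkref{rmk-PPProperties}(4)(d) applied to a normal slice. Fix $x \in S_\alpha$ and, as in the proof of \propositionref{normalslicelcdef}, shrink $X$ so that $S_\alpha$ becomes a minimal stratum, ensuring ${\rm lcdef}_{S_\alpha}(Z) = {\rm lcdef}_x(Z)$. Write $e = \dim S_\alpha$ and $c = {\rm codim}_Z(S_\alpha) = d - e$, and pick a normal slice $T_\alpha \subseteq X$ of codimension $e$ so that $Z \cap T_\alpha$ is $c$-dimensional. By \propositionref{normalslicelcdef}, we already have ${\rm lcdef}_x(Z) = {\rm lcdef}_x(Z \cap T_\alpha)$, so the whole problem boils down to applying the standard codimension bound to the slice.

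The heart of the argument is to establish $\HRH_x(Z \cap T_\alpha) = \HRH_x(Z)$. Let $\iota_Z \colon Z \cap T_\alpha \hookrightarrow Z$ be the inclusion, which by Whitney's condition (a) is non-characteristic with respect to each cohomology sheaf of $\QQ^H_Z[d]$. The non-characteristic pullback formula then gives $\iota_Z^! \QQ_Z^H \cong \QQ_{Z \cap T_\alpha}^H[-2e](-e)$, and combined with $i_{x,Z}^! = i_{x,Z\cap T_\alpha}^! \circ \iota_Z^!$ this yields the Tate-twisted isomorphism of mixed Hodge structures
\[ H^i_{\{x\}}(Z) \cong H^{i-2e}_{\{x\}}(Z \cap T_\alpha)(-e).\]
Applying $F_{k-d}$ and using the convention $F_p \cM(-e) = F_{p+e}\cM$ from \exampleref{eg-TateTwist} together with $d - e = c$, the identity reads $F_{k-d} H^i_{\{x\}}(Z) = F_{k-c} H^{i-2e}_{\{x\}}(Z \cap T_\alpha)$. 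After the substitution $j = i - 2e$, the range $i < 2d$ becomes $j < 2c$, so the criterion of \theoremref{prop-LocCohAtPoint} for $Z$ at $x$ at level $k$ coincides exactly with the same criterion for $Z \cap T_\alpha$ at $x$; hence $\HRH_x(Z) = \HRH_x(Z \cap T_\alpha)$.

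To conclude, pick an open $U \subseteq Z \cap T_\alpha$ around $x$ with $\HRH(U) = \HRH_x(Z \cap T_\alpha) = \HRH_x(Z)$ and apply \remarkref{rmk-PPProperties}(4)(d) to $U$; this gives ${\rm lcdef}_x(Z \cap T_\alpha) \leq {\rm lcdef}(U) \leq \max\{c - 2\HRH_x(Z) - 3, 0\}$. Combined with the chain ${\rm lcdef}_{S_\alpha}(Z) = {\rm lcdef}_x(Z) = {\rm lcdef}_x(Z \cap T_\alpha)$, this establishes the first inequality. The second inequality then follows by taking the maximum over strata $S_\alpha$ with $\dim S_\alpha = \dim Z_{\rm nRS}$, so that $c = {\rm codim}_Z(Z_{\rm nRS})$, and weakening the right-hand side via $\HRH_x(Z) \geq \HRH(Z)$. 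The main obstacle is the non-characteristic pullback identity on local cohomology and the careful bookkeeping of the Tate twist through the Hodge filtration indices; once that computation is in place, the normal slice reduction and the global bound do the rest.
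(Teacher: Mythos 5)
Your proof takes the same route as the paper's primary argument: shrink so that $S_\alpha$ becomes a minimal stratum, take a normal slice $T_\alpha$ through $x$, use \propositionref{normalslicelcdef} to equate the local cohomological defects of $Z$ and the slice, and then apply the global bound of \remarkref{rmk-PPProperties}(4)(d) to the slice. Where the paper simply asserts the one-sided inequality $\HRH_x(Z) \leq \HRH_x(Z\cap T_\alpha)$ (which is all that is needed, since it only has to push the right-hand side up), you supply a detailed argument for an equality via non-characteristic pullback on local cohomology; the Tate-twist bookkeeping there is correct.

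There is, however, a quantifier subtlety in the way you invoke \theoremref{prop-LocCohAtPoint}. That theorem (see also \theoremref{propthm-LocCohAtPoint} in the introduction) characterizes the \emph{global} invariant $\HRH(Z)\geq k$ in terms of the Hodge filtration on $H^i_{\{x\}}(Z)$ at \emph{every} point $x\in Z$; it does not, as literally stated, characterize the local invariant $\HRH_x(Z)$ by the criterion at the single point $x$. Your non-characteristic pullback shows that the filtered local cohomology at $x$ matches for $Z$ and $Z\cap T_\alpha$, which is an honest statement; but to conclude $\HRH_x(Z)=\HRH_x(Z\cap T_\alpha)$ you are implicitly using the pointwise characterization ``$\HRH_x(Z)\geq k$ iff $F_{k-d}i_x^!S^\bullet=0$.'' The forward implication is clear, but the converse requires showing that vanishing of $F_{k-d}i_x^!S^\bullet$ at the single point $x$ propagates to a neighborhood, which is an upper-semicontinuity statement not established in the paper's Section~\ref{sect-Preliminaries} lemmas. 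This step should be justified; the paper's own proof shares the same informality by asserting the inequality without argument, so you are in good company, but it is worth flagging.

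Note also that the paper gives a second, cleaner proof directly from \lemmaref{lem-SupportDimBound}: the bound $\dim{\rm Supp}\,\cohH^i\mathbf D_Z(K_Z^\bullet)\leq d-2\HRH(Z)-3-i$ immediately yields the stratum-wise inequality by choosing $i$ maximal with $S_\alpha\subseteq{\rm Supp}\,\cohH^i\mathbf D_Z(K_Z^\bullet)$, with no normal slices or pointwise local cohomology needed. That alternative sidesteps the subtlety entirely and is worth knowing.
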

\begin{proof} The first claim implies the second by first taking $\alpha$ such that ${\rm lcdef}_{S_\alpha}(Z) = {\rm lcdef}_{\rm gen}(Z)$ and $\dim S_\alpha = \dim Z_{\rm nRS}$, and then noting that $\HRH(Z) \leq \HRH_x(Z)$. So it suffices to prove the first claim. 

For $x\in S_\alpha$, take a normal slice $T_\alpha$ through $x$. The first claim is then immediate from the fact that $\HRH_x(Z) \leq \HRH_x(Z\cap T_\alpha)$ and the inequality (from \remarkref{rmk-PPProperties}(4d))
\[ {\rm lcdef}_{S_\alpha}(Z)= {\rm lcdef}_x(Z) = {\rm lcdef}_x(Z\cap T_\alpha) \leq \max\{\dim(Z\cap T_\alpha) - 2\HRH_x(Z\cap T_\alpha)-3,0\}.\]
This completes the proof.

As an alternative proof, one can use \lemmaref{lem-SupportDimBound}. Indeed, assuming $\HRH(Z) \geq 0$ this lemma tells us that
\[ \dim {\rm Supp}\cohH^i \mathbf D_Z(K_Z^\bullet) \leq \dim(Z) - 2\HRH(Z) -3-i,\]
and so choosing $i$ maximal so that $S_\alpha \subseteq {\rm Supp}(\cohH^i \mathbf D_Z(K_Z^\bullet))$, we get
\[ \dim S_\alpha \leq \dim {\rm Supp} \cohH^i \mathbf D_Z(K_Z^\bullet) \leq \dim(Z) - 2\HRH(Z) -3 -i,\]
but by definition, such an $i$ is ${\rm lcdef}_{S_\alpha}(Z)$.
\end{proof}

\begin{remark}
When the locus $Z_{\textrm{nRS}}$ is isolated, we have ${\rm lcdef}_{\rm gen}(Z)={\rm lcdef}(Z)$ (in particular, if $\dim Z\leq 3$ and $\HRH(Z)\geq 0$, we always have equality). However, equality can also hold even when $Z_{\textrm{nRS}}$ is non-isolated, see the examples in \S \ref{sec-determinantal}.
\end{remark}

\section{Examples}\label{sect-Examples}

This section is devoted to providing various examples with different features.

\subsection{Affine cones, toric and secant varieties} We first calculate the HRH level of affine cones over smooth projective varieties following the treatment of \cites{SVV}.

\begin{proposition}\label{prop-HRHCones}
Let $X$ be a smooth projective variety of dimension $n$, and $L$ be an ample line bundle on $X$. Let \[Z=C(X,L):=\mathrm{Spec}\left(\bigoplus_{m\geq 0}H^0(X,L^m)\right)\] be the affine cone with conormal bundle $L$. Then, for $k\leq \frac{n+1}{2}$, we have $\HRH(Z) \geq k$ if and only if the following two conditions are satisfied:
\begin{itemize}
    \item[(1)] $H^i(\Omega_X^p)=0$ for all $i\neq p, 0\leq p\leq k$,
    \item[(2)] $H^0(\mathcal{O}_X)\xrightarrow{\cup c_1(L)} H^1(\Omega_X^1)\xrightarrow{\cup c_1(L)}\cdots\xrightarrow{\cup c_1(L)}H^k(\Omega_X^k)$ are all isomorphisms. 
\end{itemize}
\end{proposition}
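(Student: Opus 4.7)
The plan is to apply \theoremref{prop-LocCohAtPoint} at the vertex $0 \in Z$, the unique singular point of $Z = C(X,L)$. Since $\dim Z = d = n+1$ and $F_{k-d}$ corresponds to the standard decreasing Hodge filtration $F^{d-k}$, the condition $\HRH(Z) \geq k$ is equivalent to
\[
F^{n+1-k} H^j(L_0) = 0 \text{ for } 1 \leq j \leq 2n, \qquad F^{n+1-k} H^{2n+1}(L_0) = \QQ,
\]
where $L_0$ is the link at the vertex. Using the standard resolution $\pi \colon \mathrm{Tot}(L^{-1}) \to Z$ contracting the zero section to $0$, the link $L_0$ deformation retracts onto the unit circle bundle $\rho \colon M \to X$, whose Euler class is $\pm c_1(L) \in H^{1,1}(X)$.

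I would then invoke the Gysin sequence for $\rho$ as an exact sequence of mixed Hodge structures, the Thom isomorphism contributing a Tate twist by $(-1)$: this gives short exact sequences of MHS
\[
0 \to \mathrm{coker}\bigl(c_1(L) \colon H^{j-2}(X)(-1) \to H^j(X)\bigr) \to H^j(M) \to \ker\bigl(c_1(L) \colon H^{j-1}(X)(-1) \to H^{j+1}(X)\bigr) \to 0,
\]
which are strict on Hodge filtrations. The vanishing $F^{n+1-k} H^j(M) = 0$ for each $j \in [1, 2n]$ thus reduces to the pair of conditions $(a_j)$: $F^{n+1-k} H^j(X) = c_1(L) \cdot F^{n-k} H^{j-2}(X)$, and $(b_j)$: $c_1(L) \colon F^{n-k} H^{j-1}(X) \hookrightarrow F^{n+1-k} H^{j+1}(X)$. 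The remaining requirement $F^{n+1-k} H^{2n+1}(L_0) = \QQ$ is automatic, since $H^{2n+1}(X) = H^{2n+2}(X) = 0$ forces $H^{2n+1}(L_0) \cong \QQ(-n-1)$.

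Finally, I would translate $(a_j)$ and $(b_j)$ into conditions (1) and (2) via a Hodge-piece analysis. Specializing $(b_{j+1})$ to the top Hodge summand $a = n$ (where the target lives in $H^{n+1,\ast}(X) = 0$) forces $H^j(\Omega_X^n) = 0$ for $j \neq n$, which by Serre duality is condition (1) at $p = 0$; iterating the argument downward to $a = n-1, n-2, \ldots, n-k$, where each step uses the previously-established vanishing to contract the target, yields (1) for $p = 1, \ldots, k$. The remaining content of $(a_{2a})$ and $(b_{2a-1})$ with $a \in [n+1-k, n]$ forces $c_1(L) \colon H^{a-1, a-1}(X) \to H^{a, a}(X)$ to be simultaneously surjective and injective, which via Serre duality is exactly condition (2). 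The converse is a direct check: once (1) reduces $F^{n+1-k} H^j(X)$ to the single diagonal piece $H^{j/2, j/2}$ (appearing only for $j = 2a$ with $a \in [n+1-k, n]$), condition (2) immediately implies $(a_j)$ and $(b_j)$. The main obstacle will be the careful bookkeeping: tracking the Tate twist from fiber integration along the real circle, and verifying that the hypothesis $k \leq (n+1)/2$ ensures the ``lower'' and ``upper'' Serre-dual ranges of the Hodge diamond together cover all relevant Hodge pieces, so that no extra constraints on $X$ emerge beyond (1) and (2).
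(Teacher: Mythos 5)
Your proposal is correct and takes a genuinely different route from the paper's argument. The paper works with the strong log resolution $f\colon\tilde Z\to Z$ obtained by blowing up the vertex, identifies $\tilde Z\cong{\rm Tot}(L^{-1})$, and uses the residue triangle together with the explicit cohomology formulas for $H^i(\Omega^k_{\tilde Z})$ from the appendix of \cite{SVV} (plus Hard Lefschetz to control the connecting map); the proof is thus tied to the Du Bois complexes of the cone directly. You instead reduce to \theoremref{prop-LocCohAtPoint} applied at the vertex, identify the link with the $S^1$-bundle $M\to X$ associated to $L$, and feed the MHS-compatible Gysin sequence into the filtered condition, after which everything is Hodge-piece bookkeeping on $X$. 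This is a nice illustration of the paper's \theoremref{prop-LocCohAtPoint} and avoids the cone-specific residue computations entirely; it also makes transparent that the only input beyond $H^\bullet(X,\Omega_X^\bullet)$ is the class $c_1(L)$, which is somewhat obscured in the residue-sequence approach. The bookkeeping you flag as the ``main obstacle'' does close up: after using (1) (plus Serre duality) to kill all off-diagonal Hodge pieces $H^{a,b}(X)$ with $\max(a,b)\geq n-k$ and $a\neq b$, the surviving diagonal pieces are controlled by (2) via the self-adjointness of $\cup\,c_1(L)$ under the Serre pairing, and the two Serre-dual windows $p\in[0,k]$ and $p\in[n-k,n]$ exhaust the Hodge pieces appearing in $F^{n-k}$ and $F^{n+1-k}$, so no extra constraints on $X$ arise. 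Two small items to tidy when writing it up: the range in your link-vanishing condition should be $0\leq j\leq 2n$ (the $j=0$ case is vacuous since $n+1-k>0$); and it is worth recording that the MHS on $H^\bullet(L_0)$ from the MHM definition agrees with the Deligne MHS on the algebraic $\CC^*$-bundle $Z\setminus\{0\}$, via the localization triangle together with contractibility of $Z$, before invoking the Gysin sequence.
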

\begin{proof}
The blow up $f\colon \widetilde{Z}\to Z$ at the cone point $v$ is a strong log resolution of $Z$ with $E\cong X$. Note that by our assumption, $k<\textrm{codim}_Z(Z_{\textrm{sing}})=n+1$, whence the bottom map in  \eqref{pic1} is an isomorphism by \cite{SVV}*{Lem. 2.4}. In what follows, we use  \eqref{pic1} without any further reference. 

Let us first prove the assertion when $k=0$. We have the distinguished triangle \[\underline{\Omega}_Z^0\to {R}f_*\mathcal{O}_{\widetilde{Z}}\oplus\mathcal{O}_v\to { R}f_*\mathcal{O}_X \xrightarrow{+1}.\]
Since $Z$ is affine, using the arguments of \cite{SVV}*{Appendix A.1}, we see that the above induces 
\begin{equation}\label{eq2}
    0\to\Gamma(\mathcal{H}^0(\underline{\Omega}_Z^0))\to H^0(\mathcal{O}_{\widetilde{Z}})\oplus\mathbb{C}\to H^0(\mathcal{O}_X)\to 0,
\end{equation}
\begin{equation}\label{eq3}
    0\to\Gamma(\mathcal{H}^i(\underline{\Omega}_Z^0))\to H^i(\mathcal{O}_{\widetilde{Z}})\to H^i(\mathcal{O}_X)\to 0\quad \forall i\geq 1.
\end{equation}
By {\it loc. cit.}, $H^0(\mathcal{O}_{\widetilde{Z}})=\bigoplus_{m\geq 0}H^0(L^m)$ and the map $H^0(\mathcal{O}_{\widetilde{Z}})\oplus\mathbb{C}\to H^0(\mathcal{O}_X)$ in \eqref{eq2} sends $(x,\alpha)\mapsto \varphi(x)-\alpha$ where $\varphi\colon \bigoplus_{m\geq 0}H^0(L^m)\to H^0(\mathcal{O}_X)$ is the projection. Thus the composed map \[\Gamma(\mathcal{H}^0(\underline{\Omega}_Z^0))\to H^0(\mathcal{O}_{\widetilde{Z}})\oplus\mathbb{C}\to H^0(\mathcal{O}_{\widetilde{Z}})\] is an isomorphism. Finally, by \eqref{eq3}, for $i\geq 1$, $\Gamma(\mathcal{H}^i(\underline{\Omega}_Z^0))\to H^i(\mathcal{O}_{\widetilde{Z}})$ is an isomorphism if and only if $H^i(\mathcal{O}_X)=0$, which concludes the proof for $k=0$.

We use induction on $k$. Assume $k\geq 1$ and note the distinguished triangle induced by the residue sequence on $\widetilde{Z}$:
\begin{equation}\label{res}
    {R}f_*\Omega_{\widetilde{Z}}^k\to{ R}f_*\Omega_{\widetilde{Z}}^k(\log X)\to { R}f_*\Omega_X^{k-1}\xrightarrow{+1}.
\end{equation}

It is shown in \cite{SVV}*{Appendix A.2} that 
\begin{equation}\label{eq4}
    H^i(\Omega_{\widetilde{Z}}^k)=\bigoplus_{m\geq 0}H^i(\Omega_X^k\otimes L^m)\oplus \bigoplus_{m\geq 1}H^i(\Omega_X^{k-1}\otimes L^m)
\end{equation}
and the map
\begin{equation}\label{gamma}
    \Gamma(\mathcal{H}^i(\underline{\Omega}_Z^k))\to H^i(\Omega_{\widetilde{Z}}^k)
\end{equation}
induced from 
\[\underline{\Omega}_Z^k\to { R}f_*\Omega_{\widetilde{Z}}^k\to {R}f_*\Omega_X^{k}\xrightarrow{+1}\]
realizes $\Gamma(\mathcal{H}^i(\underline{\Omega}_Z^k))$ as the following direct summand of $H^i(\Omega_{\widetilde{Z}}^k)$ through \eqref{eq4}:
\[\Gamma(\mathcal{H}^i(\underline{\Omega}_Z^k))=\bigoplus_{m\geq 1}H^i(\Omega_X^k\otimes L^m)\oplus \bigoplus_{m\geq 1}H^i(\Omega_X^{k-1}\otimes L^m).\]
Thus, we are reduced to showing that the composition 
\begin{equation}\label{comp}
    \Gamma(\mathcal{H}^i(\underline{\Omega}_Z^k))\hookrightarrow H^i(\Omega_{\widetilde{Z}}^k)\to H^i(\Omega_{\widetilde{Z}}^k(\log X))
\end{equation}
(the first map is \eqref{gamma} and the second one is induced by \eqref{res})
is an isomorphism for all $i$ if and only if the two conditions in the statement are satisfied. If $i\neq k-1,k$, then we see from our induction hypothesis and \eqref{res} that the second map above is an isomorphism, whence the composition is an isomorphism if and only if $H^i(\Omega_X^k)=0$. Note that the connecting map 
\[H^{i}(\Omega_X^{k-1})\to H^{i+1}(\Omega_{\widetilde{Z}}^k)\] arising from \eqref{res} is the cup product $\cup c_1(L)$ map and lands in the direct summand $H^{i+1}(\Omega_X^k)$. This is injective for $i=k-1$ by Hard Lefschetz, by our assumption on $k$. Thus the second map in \eqref{comp} is an isomorphism for $i=k-1$, whence the composition is an isomorphism if and only if $H^{k-1}(\Omega_X^k)=0$. Finally, by the above argument, \eqref{res} induces the exact sequence 
\[ 0\to H^{k-1}(\Omega_X^{k-1})\to H^k(\Omega_{\widetilde{Z}}^k)\to H^k(\Omega_{\widetilde{Z}}^k(\log X))\to 0.\] It follows immediately from the description of $\Gamma(\mathcal{H}^i(\underline{\Omega}_Z^k))$ as a direct summand of $H^i(\Omega_{\widetilde{Z}}^k)$ that the composed map \eqref{comp} is an isomorphism for $i=k$ if and only if \[H^{k-1}(\Omega_X^{k-1})\xrightarrow{\cup c_1(L)}H^k(\Omega_X^k)\] is an isomorphism. That completes the proof.
\end{proof}

\begin{remark}
As before, let $X$ be a smooth projective variety of dimension $n$, $L$ be an ample line bundle on $X$, and $Z=C(X,L)$. Since $Z$ is singular only at the cone point, we have \[{\rm lcdef}_{\rm gen}(Z)={\rm lcdef}(Z)\]
\[=\min
\left\{
c\in\mathbb{N} \biggm| \begin{array}{l}
H^i(X,\mathbb{C})\xrightarrow{\cup c_1(L)} H^{i+2}(X,\mathbb{C}) \textrm{ are isomorphisms for all} -1\leq i\leq n-3-c,\\
\textrm{ and injective for }i=n-2-c\textrm{ with the convention that } H^{-1}(X,\CC)=0
\end{array}
\right\}
\]
where the last equality comes from \cite{PS}*{Thm. 6.1}. In fact, when $X\subseteq\mathbb{P}^N$, setting $Z=C(X)\subseteq\mathbb{A}^{N+1}$ to be the affine cone, we have the following by \cite{PS}*{Thm. A}
\[{\rm lcdef}_{\rm gen}(Z)={\rm lcdef}(Z)=\min\left\{c\in\mathbb{Z}_{\geq 0} \mid H^i(\mathbb{P}^N,\mathbb{C})\xrightarrow{\sim} H^i(X,\mathbb{C})\,\forall\, i\leq n-1-c\right\}.\]
\end{remark}

\begin{example}\label{ex-hrh-vs-db} Here are two explicit examples of affine cones with interesting properties:

(1) There are varieties with $k$-Du Bois singularities that satisfy $\HRH(Z) \geq k$ but their singularities are not $k$-rational (in the sense of \cite{SVV}). For example, consider $Z=C(\mathbb{P}^2,\mathcal{O}_{\mathbb{P}^2}(2))$. Then $Z$ is a rational homology manifold by \propositionref{prop-HRHCones}, but according to \cite{SVV}*{Prop. F}, its singularities are $1$-Du Bois but not $1$-rational.

(2) Let $X$ be a Godeaux surface, in other words, a surface satisfying $p_g=q=0$, $h^{1,1}(X)=9$ and having ample canonical bundle $\omega_X$. Thus, the cone $Z=C(X,\omega_X)$ satisfies $\HRH(Z)=0$ by the above proposition. But the singularities of $Z$ are not pre-$0$-Du Bois by \cite{SVV}*{Prop. F} as $h^2(\omega_X)\neq 0$. This is an example of a variety with $\HRH(Z)$ strictly higher than its Du Bois level.
\end{example}

\begin{example}[Normal affine toric varieties]
    Let $Z$ be a normal affine toric variety. By \cite{SVV}*{Prop. E} and \remarkref{rmk-PPProperties}, we obtain
    \[\HRH(Z)= \begin{cases} 
      +\infty & \textrm{if $Z$ is simplicial,} \\
      0 & \textrm{otherwise}.
   \end{cases}
\]
\end{example}

\begin{example} The recent preprints \cites{KV1,KV2} compute many invariants concerning the trivial Hodge module $\QQ_Z^H[\dim Z]$ for $Z$ a toric variety. Using this result, those authors provide an example with ${\rm lcdef}_{\rm gen}(Z) < {\rm lcdef}(Z)$ of dimension 4, which is the minimal dimension where such behavior can occur. In fact, their example \cite{KV2}*{Eg. 4.3} is such that the inequality of \theoremref{thm-ppbound} does not hold with ${\rm lcdef}(Z)$ in place of ${\rm lcdef}_{\rm gen}(Z)$. 
\end{example}

\begin{example}[Secant varieties]
    Let $X\subset \mathbb{P}^N$ be a smooth projective variety embedded by the complete linear series of a sufficiently positive line bundle (i.e. one that satisfies $(Q_1)$-property in the sense of
 \cite{SecantVarieties}*{Def. 3.1}). Let $\Sigma$ be its secant variety. Then by \cite{SecantVarieties} and \remarkref{rmk-PPProperties},  
\[\HRH(\Sigma)= \begin{cases} 
      +\infty & \textrm{if $X\cong\mathbb{P}^1$,} \\
      0 & \textrm{if $H^i(\mathcal{O}_X)=0$ for all $i\geq 1$ and $X\ncong\mathbb{P}^1$,} \\
      -1 & \textrm{otherwise}.
   \end{cases}
\]

This also follows from \cite{CDOR}*{Cor. B}.
\end{example}

\subsection{Ideals of generic, symmetric and skew-symmetric minors} \label{sec-determinantal}
In this subsection, we show how the main theorem of \cite{RW} can be used to compute ${\rm lcdef}_{\rm gen}(-)$ and to give a bound on ${\rm HRH}(-)$ for determinantal varieties.

We will be interested in subvarieties defined by matrices of appropriate ranks of the following spaces: \begin{enumerate} \item (Generic) $X = {\rm Mat}_{m,n}(\CC)$ with $m\geq n$, \item (Odd skew) $X = {\rm Mat}_{n}(\CC)^{\rm skew}$, $n$ odd, \item  (Even skew) $X = {\rm Mat}_{n}(\CC)^{\rm skew}$, $n$ even, \item (Symmetric) $X = {\rm Mat}_{n}(\CC)^{\rm sym}$. \end{enumerate}

In cases (1) and (4), we let $Z_p$ denote the subvariety of matrices of rank $\leq p$ and in cases (2) and (3), we let $Z_p$ denote the subvariety of matrices of rank $\leq 2p$. In every case, $Z_p$ is known to have rational singularities by \cite{Boutot}. Thus, ${\HRH}(Z_p) \geq 0$.

Following \cite{RW}, we let $D_p$ be the intersection homology $\Dmod_X$-module associated to the trivial local system on $Z_{p,\rm reg}$. We let $\Gamma(X)$ denote the Grothendieck group of holonomic $\Dmod_X$-modules. For $p$ fixed, we write
\[ H_p(q) = \sum_{j\geq 0} \left[\cohH_{Z_p}^j(\shO_X)\right]\cdot q^j \in \Gamma(X)[q].\]

Finally, for $a\geq b\geq 0$, let $\binom{a}{b}_q$ be the $q$-binomial coefficient, defined by
\[ \binom{a}{b}_q = \frac{(1-q^a)\dots (1-q^{a-b+1})}{(1-q^b)\dots (1-q)}.\]

The following computation will be important below:

\begin{lemma} \label{lem-qbinomial} Let $a \geq b \geq 0$. Then
\[ \binom{a}{b}_{q^{-4}} = q^{-4b(a-b)} + \text{ higher order terms}.\]
\end{lemma}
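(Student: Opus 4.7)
The plan is to prove this by directly analyzing the leading and trailing behavior of the $q$-binomial coefficient as a polynomial in $q$, then tracking what happens under the substitution $q \mapsto q^{-4}$.

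First I would establish that $\binom{a}{b}_q \in \ZZ[q]$ is a polynomial in $q$ of degree $b(a-b)$ with constant term $1$ and leading coefficient $1$. This is standard: writing
\[ \binom{a}{b}_q = \prod_{i=1}^b \frac{1 - q^{a-b+i}}{1-q^i},\]
the numerator has lowest-degree term $1$ and highest-degree term $q^{\sum_{i=1}^b (a-b+i)} = q^{b(a-b) + b(b+1)/2}$, while the denominator has lowest-degree term $1$ and highest-degree term $q^{b(b+1)/2}$. So after cancellation the resulting polynomial (which is a polynomial because $\binom{a}{b}_q$ equals the generating function for partitions fitting in a $b \times (a-b)$ box) has constant term $1$ and leading term $q^{b(a-b)}$.

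Next I would substitute $q \mapsto q^{-4}$ into the expression $\binom{a}{b}_q = 1 + c_1 q + c_2 q^2 + \cdots + q^{b(a-b)}$ (with intermediate non-negative integer coefficients $c_i$). The result is
\[\binom{a}{b}_{q^{-4}} = 1 + c_1 q^{-4} + c_2 q^{-8} + \cdots + q^{-4 b(a-b)},\]
so the lowest-degree term (i.e., the most negative power of $q$) is exactly $q^{-4b(a-b)}$, with coefficient $1$, and every other monomial appearing has strictly higher degree in $q$. This is precisely the assertion of the lemma.

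There is essentially no obstacle: the only thing to verify carefully is the statement that $\binom{a}{b}_q$ is a polynomial of degree $b(a-b)$ with leading coefficient one, which follows directly from the product formula above or from the combinatorial interpretation as a partition-counting generating function. The phrase ``higher order terms'' in the statement should therefore be understood as terms $q^j$ with $j > -4b(a-b)$ in the Laurent expansion about $q = \infty$, which are precisely the images of the lower-degree monomials of $\binom{a}{b}_q$ under $q \mapsto q^{-4}$.
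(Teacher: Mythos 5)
Your proof is correct and uses essentially the same argument as the paper: both rely on the product formula for the $q$-binomial coefficient and the degree count $b(a-b)$, the only organizational difference being that you first pin down the polynomial $\binom{a}{b}_q$ and then substitute $q\mapsto q^{-4}$, whereas the paper substitutes first and then factors out $q^{-4b(a-b)}$ from numerator and denominator. Your explicit appeal to polynomiality (via the partitions-in-a-box interpretation) is a nice touch, since the paper's final step implicitly uses the same fact.
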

\begin{proof} We can write
\[ \binom{a}{b}_{q^{-4}} = \frac{(1-q^{-4a})\dots (1-q^{-4(a-b+1)})}{(1-q^{-4b})\dots (1-q^{-4})}\]
\[ = \frac{q^{-4(\sum_{i=1}^b (a-b+i))}}{q^{-4(\sum_{i=1}^b i)}}\cdot \frac{(q^{4a}-1)\dots (q^{4(a-b+1)} -1)}{(q^{4b}-1)\dots (q^{4}-1)}\]
\[ = q^{-4b(a-b)}(1 + \text{ higher order terms})\]
which completes the proof.
\end{proof}

Now, we can state the main result of \cite{RW}, which gives a formula for $H_p(q) \in \Gamma(X)[q]$.

\begin{theorem}[\cite{RW}*{Main Thm.}]\label{thm-rw} In the notation above, we have the following formula for $H_p(q)$ in the cases (1)-(4).
\begin{enumerate} \item (Generic) For all $0\leq p < n$, we have \[H_p(q) = \sum_{s=0}^p [D_s]\cdot q^{(n-p)^2+(n-s)(m-n)} \binom{n-s-1}{p-s}_{q^2}.\]
\item (Odd skew) Write $n= 2m+1$, then for all $0\leq p< m$, we have \[H_p(q) = \sum_{s=0}^p [D_s]\cdot q^{2(m-p)^2+(m-p) +2(p-s)} \binom{m-1-s}{p-s}_{q^4}.\]
\item (Even skew) Write $n = 2m$, then for all $0\leq p < m$, we have \[H_p(q) = \sum_{s=0}^p [D_s]\cdot q^{2(m-p)^2-(m-p)} \binom{m-1-s}{p-s}_{q^4}.\]
\item (Symmetric) For all $0\leq p < n$, we have \[H_p(q) = \sum_{\ell = 0}^{\lfloor \frac{p}{2}\rfloor} [D_{p-2\ell}]\cdot q^{1+\binom{n-p+2\ell+1}{2} - \binom{2\ell+2}{2}}\binom{\lfloor \frac{n-p+2\ell-1}{2}\rfloor}{\ell}_{q^{-4}}.\]
\end{enumerate}
\end{theorem}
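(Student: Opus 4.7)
My plan is to exploit the $G$-equivariance of everything in sight. In each of the four cases there is a classical group $G$ (respectively $\GL_m(\CC)\times\GL_n(\CC)$, $\GL_n(\CC)$, $\GL_n(\CC)$, $\GL_n(\CC)$) acting on $X$ whose orbits are precisely the sets $Z_s \setminus Z_{s-1}$ appearing in the stratification. The category of $G$-equivariant holonomic $\Dmod_X$-modules is semisimple, with simple objects the intersection cohomology $\Dmod$-modules $D_s$, so each $\cohH^j_{Z_p}(\shO_X)$ (being $G$-equivariant) decomposes canonically as $\bigoplus_s D_s^{\oplus a_{s,j,p}}$ in $\Gamma(X)$ and the entire theorem reduces to computing the non-negative integer multiplicities $a_{s,j,p}$.

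First I would verify the leading ($s=p$) contribution directly: restricting to the regular locus $Z_p^{\circ} := Z_p \setminus Z_{p-1}$, the local cohomology $\cohH^j_{Z_p}(\shO_X)\vert_{Z_p^{\circ}}$ is $\shO_{Z_p^{\circ}}$ placed in cohomological degree $\codim_X(Z_p)$ and zero otherwise. This accounts for exactly the $s=p$ summand of each displayed formula, with the correct monomial prefactor.

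Next I would perform a descending induction on the rank drop $p-s$ using the localization triangle $i_* i^! \to \mathrm{id} \to j_* j^* \xrightarrow{+1}$ for $Z_{s-1} \hookrightarrow Z_s \hookleftarrow Z_s^{\circ}$. The geometric input driving the recursion is the \emph{transverse slice structure} of determinantal varieties: at a point of $Z_s^{\circ}$, the singularity of $Z_p$ is \'etale-locally a product of (a neighborhood in) $Z_s^{\circ}$ with a smaller determinantal variety $Z'_{p-s}$ of the same type, sitting inside a smaller ambient matrix space $X'$. A K\"unneth-style argument for $\Dmod$-modules then relates the newly appearing $D_s$-isotypical component of $\cohH^j_{Z_p}(\shO_X)$ to the local cohomology of this smaller determinantal variety on the normal slice, with a shift in cohomological degree equal to $\codim_X(Z_s)$ and a representation-theoretic degree shift coming from the semi-invariant cutting out the slice. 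Summing these contributions along the chain $Z_s\subseteq Z_{s+1}\subseteq\cdots\subseteq Z_p$ produces $q$-binomial coefficients: the bases $q^2$ and $q^4$ reflect the codimension jump ($2$ or $4$) per unit rank drop in the generic/skew cases, and the symmetric family's $q^{-4}$ appears because the relevant filtration naturally runs in the opposite direction, so that the ``top'' contribution must be extracted by inverting the grading.

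The main obstacle will be the careful bookkeeping required to match the normal-slice recursion with the explicit $q$-binomial coefficients and to confirm that no spurious extensions appear in the localization triangles — the latter is in fact automatic from semisimplicity of the $G$-equivariant category, which is one of the features that makes this strategy tractable. An alternative route I would try in parallel, to cross-check the bookkeeping, is to compute $\cohH^j_{Z_p}(\shO_X) = \varinjlim_r \shExt^j(\shO_X/I_{Z_p}^r, \shO_X)$ from a $G$-equivariant free resolution of $\shO_{Z_p}$ (the Lascoux resolution in the generic case and its skew/symmetric analogues); then the $q$-binomials appear as graded multiplicities in the dual of the resolution and the formula can be read off by systematically tracking $G$-representations.
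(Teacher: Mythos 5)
This theorem is not proved in the paper at all --- it is imported as a black box from the reference \cite{RW} (and attributed as such in the theorem header). There is therefore no proof in the paper to compare against, and what you are really proposing is a substitute proof of the cited result. With that understood, a few comments on your sketch.

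Your two load-bearing observations --- semisimplicity of the category of $G$-equivariant holonomic $\Dmod$-modules (so that the computation is entirely a multiplicity count), and the identification of the top stratum contribution (the $s = p$ summand is $[D_p]$ in degree $\codim_X(Z_p)$, because $Z_p$ is Cohen--Macaulay and smooth along $Z_p^\circ$) --- are both correct and are indeed used in \cite{RW}. The transverse slice structure of determinantal orbits (local product with a smaller determinantal singularity of the same type) is also a genuine and classical fact, so the proposed induction on rank drop is not unreasonable as an outline. However, the step where the real work lives is exactly the one you wave at: extracting the explicit exponents and the $q$-binomial coefficients from the localization/slicing recursion. Two specific points there do not hold up. First, your claimed explanation that ``the bases $q^2$ and $q^4$ reflect the codimension jump ($2$ or $4$) per unit rank drop'' is not correct: the codimension jump $\codim_X(Z_{p-1}) - \codim_X(Z_p)$ grows linearly in $m,n$ and is not a constant $2$ or $4$; the bases $q^2$, $q^4$, $q^{-4}$ instead record the spacing of the cohomological degrees in which a fixed $D_s$ recurs, which is a more delicate piece of bookkeeping in the equivariant character computation. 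Second, the phrase ``a representation-theoretic degree shift coming from the semi-invariant cutting out the slice'' is not a step that can be filled in as stated --- a transverse slice to an intermediate orbit is not cut out by a semi-invariant, and it is unclear what degree shift this is supposed to produce. So as written, the proposal identifies the right structural ingredients but not a mechanism that actually yields the displayed polynomials. Your proposed cross-check via equivariant (Lascoux-type) resolutions and a direct computation of $\varinjlim_r \shExt^j(\shO_X/I^{r},\shO_X)$ is closer in spirit to how such results are actually obtained and is a more promising route, but again only at the level of a program. For the record, \cite{RW} does not proceed by normal-slice induction; their argument combines the classification and characters of the simple equivariant $\Dmod$-modules with Fourier transform and a filtration of $\shO_X(*Z_p)$ coming from the stratification, together with explicit character computations.
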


As $\min\{j \mid \cohH^j_{Z_p}(\shO_X) \neq 0\} = {\rm codim}_X(Z_p)$, these expressions immediately imply the following well-known formulas for the (co)dimension of $Z_p$ in $X$ (see \cite{BrunsVetter} and \cite{Weyman}).

\begin{corollary}\label{cor-dim} In the notation above, we have the following formulas in cases (1)-(4).
\begin{enumerate} \item (Generic) In this case, we have \[{\rm codim}_{X}(Z_p) =(m-p)(n-p),\textrm{ and }
\dim(Z_p) = p(m+n-p).\]

\item (Odd skew) In this case, we have \[{\rm codim}_{X}(Z_p) =(m-p)(2(m-p)+1), \textrm{ and }\dim(Z_p) = p(2(n-p)-1).\]

\item (Even skew) In this case, we have \[{\rm codim}_{X}(Z_p) =(m-p)(2(m-p)-1), \textrm{ and }\dim(Z_p) = p(2(n-p)-1).\]

\item (Symmetric) In this case, we have \[{\rm codim}_{X}(Z_p) =\binom{n-p+1}{2}, \textrm{ and }\dim(Z_p) = \frac{1}{2}p(2n-p+1).\]
\end{enumerate}
\end{corollary}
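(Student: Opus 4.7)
The plan is to read off both formulas from Theorem \ref{thm-rw} by using the classical identity
\[ \mathrm{codim}_X(Z_p) = \min\{j \geq 0 \mid \cohH^j_{Z_p}(\shO_X) \neq 0\}.\]
Since the simple modules $D_s$ appearing in Theorem \ref{thm-rw} are linearly independent in the Grothendieck group $\Gamma(X)$, the codimension is exactly the smallest power of $q$ occurring (with nonzero coefficient) in the polynomial $H_p(q)$. Then $\dim Z_p = \dim X - \mathrm{codim}_X(Z_p)$, where $\dim X$ is $mn$, $\binom{n}{2}$, $\binom{n}{2}$, $\binom{n+1}{2}$ in the four cases, respectively.

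For cases (1), (2), (3), the $q$-binomial coefficients involve $q^2$ or $q^4$ with non-negative exponents, so their constant term is $1$ and the lowest power of $q$ contributed by each summand is determined by the $q$-power prefactor. In case (1), the exponent $(n-p)^2 + (n-s)(m-n)$ is decreasing in $s$ since $m\geq n$, so the minimum is attained at $s=p$, giving $(n-p)(m-p)$. In case (2), the exponent $2(m-p)^2+(m-p)+2(p-s)$ is minimized at $s=p$, giving $(m-p)(2(m-p)+1)$. In case (3), the prefactor $2(m-p)^2 - (m-p) = (m-p)(2(m-p)-1)$ is independent of $s$.

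For case (4) (symmetric), one has to work slightly harder because the $q$-binomial is written in $q^{-4}$, so its lowest-degree term is \emph{not} $1$. By \lemmaref{lem-qbinomial}, that lowest-order term contributes $q^{-4\ell(\lfloor (n-p+2\ell-1)/2\rfloor - \ell)}$, so the lowest power of $q$ contributed by the $\ell$-th summand is
\[ f(\ell) \,=\, 1+\tbinom{n-p+2\ell+1}{2} - \tbinom{2\ell+2}{2} - 4\ell\bigl(\lfloor (n-p+2\ell-1)/2\rfloor - \ell\bigr).\]
One computes $f(0) = \binom{n-p+1}{2}$, and a direct calculation (splitting according to the parity of $n-p$) shows that $f(\ell) \geq f(0)$ for all admissible $\ell$, with $f(\ell)-f(0) \in \{0,2\ell\}$. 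This yields $\mathrm{codim}_X(Z_p) = \binom{n-p+1}{2}$.

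The main obstacle, then, is the bookkeeping in case (4) — tracking the interaction of the $q^{-4}$-binomial's lowest term with the prefactor, and the parity split of $n-p$ needed to confirm that $\ell = 0$ realizes the minimum. The other three cases amount to inspecting a prefactor that is monotone or constant in $s$. The claimed dimensions then follow by routine algebraic simplification of $\dim X - \mathrm{codim}_X(Z_p)$.
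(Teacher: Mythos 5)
Your proposal is correct and follows the same route as the paper: read off the codimension as the lowest-degree term of $H_p(q)$ using $\min\{j \mid \cohH^j_{Z_p}(\shO_X)\neq 0\}={\rm codim}_X(Z_p)$, and subtract from $\dim X$. The only slight imprecision is that in case (1) the exponent $(n-p)^2+(n-s)(m-n)$ is merely non-increasing (constant when $m=n$), not strictly decreasing, but the minimum is still attained at $s=p$ so the conclusion is unaffected; your case-(4) parity analysis matches the computation the paper carries out in the proof of Proposition~\ref{hc=ic}.
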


We ignore $p=0$ as in this case $Z_p$ is smooth, and everywhere below we assume $p\geq 1$. In fact, in case (4), when $p=1$, \theoremref{thm-rw} implies $Z_1$ is a rational homology manifold (it is known that $\textrm{lcdef}(Z_1)=0$, and by the argument of \propositionref{hc=ic} below, ${\rm IC}_{Z_1} = \cohH^{{\rm codim}_X(Z_1)}_{Z_1}(\shO_X)$ which by \theoremref{thm-characterize}\eqref{thmmain} implies $Z_1$ is a $\mathbb{Q}$-homology manifold) so we will assume in case (4) that $p \geq 2$. More precisely, in what follows, our assumptions are as follows:
 \begin{itemize}
     \item In case (1), $1\leq p<n$.
     \item In cases (2), (3), $1\leq p<m$.
     \item In case (4), $2\leq p<n$.
 \end{itemize}

By definition, $D_p = {\rm IC}_{Z_p}$, and so we can study for which $Z_p$ we have equality ${\rm IC}_{Z_p} = \cohH^{{\rm codim}_X(Z_p)}_{Z_p}(\shO_X)$.

\begin{proposition}\label{hc=ic} In the notation above, let $c_p = {\rm codim}_X(Z_p)$. 
\begin{enumerate} \item (Generic) We have equality ${\rm IC}_{Z_p} = \cohH^{c_p}_{Z_p}(\shO_X)$ if and only if $m > n$.

\item (Odd skew) We have equality ${\rm IC}_{Z_p} = \cohH^{c_p}_{Z_p}(\shO_X)$ for every $1 \leq p < m$.

\item (Even skew) We have inequality ${\rm IC}_{Z_p} \neq \cohH^{c_p}_{Z_p}(\shO_X)$ for every $1 \leq p < m$.

\item (Symmetric) We have equality ${\rm IC}_{Z_p} = \cohH^{c_p}_{Z_p}(\shO_X)$ if and only if $n \equiv p \mod 2$ (when $p\geq 2$). 
\end{enumerate}
\end{proposition}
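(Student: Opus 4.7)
The plan is to use \theoremref{thm-rw} to read off the coefficient of $q^{c_p}$ in the polynomial $H_p(q) \in \Gamma(X)[q]$. Since $[D_0], \ldots, [D_n]$ are linearly independent in the Grothendieck group of $\textrm{GL}$-equivariant holonomic $\Dmod_X$-modules, and since $\cohH^{c_p}_{Z_p}(\shO_X)$ always has $D_p = {\rm IC}_{Z_p}$ as its ``top'' composition factor, the equality ${\rm IC}_{Z_p} = \cohH^{c_p}_{Z_p}(\shO_X)$ is equivalent to the coefficient of $q^{c_p}$ in $H_p(q)$ being exactly $[D_p]$ (with multiplicity one). Thus the task is, in each case, to compute the lowest $q$-degree to which each summand $[D_s]$ (or $[D_{p-2\ell}]$) contributes and check whether this degree equals $c_p$ or is strictly greater.

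For cases (1), (2), (3) the $q$-binomials appearing have constant term equal to $1$, so the lowest degree contribution of the $s$-summand is simply its prefactor. First I would verify $s=p$ (resp.\ $\ell=0$) always produces $[D_p]$ at degree $c_p$, using \corollaryref{cor-dim}. Then, for $s<p$, in case (1) the difference ``prefactor minus $c_p$'' equals $(m-n)(p-s)$, which is $>0$ iff $m>n$, so the $D_s$ for $s<p$ contribute at degree strictly above $c_p$ precisely when $m>n$, yielding the claimed if-and-only-if. In case (2) the difference is $2(p-s)>0$, so only $[D_p]$ appears in degree $c_p$. In case (3) the difference is $0$, so every $[D_s]$ with $s<p$ contributes to $q^{c_p}$, so equality never holds. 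These are short algebraic identities.

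Case (4) is the most delicate and is where the main obstacle lies. Here the $q$-binomial is in $q^{-4}$, and by \lemmaref{lem-qbinomial} its lowest-degree term is $q^{-4b(a-b)}$ where $a=\lfloor (n-p+2\ell-1)/2\rfloor$ and $b=\ell$. I would define
\[
f(\ell) \;=\; 1+\binom{n-p+2\ell+1}{2} - \binom{2\ell+2}{2} - 4\ell\bigl(\lfloor (n-p+2\ell-1)/2\rfloor - \ell\bigr),
\]
so that the $[D_{p-2\ell}]$-summand first appears in degree $f(\ell)$, with $f(0)=c_p$ by \corollaryref{cor-dim}. The key computation is the first difference $f(\ell+1)-f(\ell)$, for which the two binomial differences yield $2(n-p)+4\ell+3$ and $4\ell+5$. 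The remaining term $-4(\ell+1)(a_{\ell+1}-\ell-1)+4\ell(a_\ell-\ell)$ depends on the parity of $n-p$ because of the floor: a direct check shows $a_\ell-\ell$ is independent of $\ell$ in both parities (equal to $(n-p)/2-1$ or $(n-p-1)/2$), so this contribution is $-4(a_\ell-\ell)$. Combining gives $f(\ell+1)-f(\ell)=2$ when $n\equiv p\pmod 2$ and $f(\ell+1)-f(\ell)=0$ when $n\not\equiv p\pmod 2$. Thus in the first parity case only $\ell=0$ contributes to $q^{c_p}$, giving ${\rm IC}_{Z_p}=\cohH^{c_p}_{Z_p}(\shO_X)$, while in the second parity case every $[D_{p-2\ell}]$ (with $\ell\geq 1$, which exists since $p\geq 2$) also contributes, so equality fails.

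The main obstacle is thus the bookkeeping in case (4): handling the floor function correctly in the exponent of the $q^{-4}$-binomial and keeping track of the sign conventions from \lemmaref{lem-qbinomial}. Everything else amounts to transparent polynomial identities that follow from the formulas of \theoremref{thm-rw}.
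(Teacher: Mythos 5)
Your proposal is correct and takes essentially the same approach as the paper: read off the lowest $q$-degree of each $[D_s]$ (or $[D_{p-2\ell}]$) summand in Theorem \ref{thm-rw}, using that the $q$-binomials in cases (1)--(3) have constant term $1$ and using Lemma \ref{lem-qbinomial} plus a parity split on $n-p$ in case (4). The only stylistic difference is that in case (4) you compute the first difference $f(\ell+1)-f(\ell)$, whereas the paper substitutes $n-p=2k$ or $2k+1$ and simplifies $f(\ell)$ directly to $2\ell + k(2k+1)$ or $(k+1)(2k+1)$; both routes give the same parity dichotomy and the same conclusion.
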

\begin{proof} By definition, one needs only check for which $s < p$ the summand $[D_s]$ appears as a coefficient of $q^{c_p}$ using \theoremref{thm-rw}. The first three claims are immediate, using that the lowest degree term of the $q$-binomial coefficient in each expression is the constant term. Note that in case (1), when $m =n$, every $[D_s]$ appears as a coefficient of $q^{c_p} = q^{(m-p)(n-p)}$.

For the case of symmetric matrices, we look at the lowest degree term in
\[q^{1+\binom{n-p+2\ell+1}{2} - \binom{2\ell+2}{2}}\binom{\lfloor \frac{n-p+2\ell-1}{2}\rfloor}{\ell}_{q^{-4}},\]
which by \lemmaref{lem-qbinomial}, is at degree
\begin{equation}\label{degree}
    1+ \binom{n-p+2\ell+1}{2} - \binom{2\ell+2}{2} -4\ell\left(\bigg\lfloor \frac{n-p+2\ell-1}{2}\bigg\rfloor - \ell\right).
\end{equation} 

We break into two cases depending on the class of $n-p$ mod $2$. We write the difference as \[n - p = \begin{cases} 2k \\ 2k+1 \end{cases},\]
so that the expression in \eqref{degree} can simplify into
\[ \begin{cases} 1+ \binom{2k+2\ell+1}{2} - \binom{2\ell+2}{2} -4\ell(k-1) & \textrm{ when }n-p=2k\\ 1+ \binom{2k+2\ell+2}{2} - \binom{2\ell+2}{2} -4\ell k & \textrm{ when }n-p=2k+1\end{cases},\]
and these are easily seen to simplify to
\[ \begin{cases} 2\ell + k(2k+1) & \textrm{ when }n-p=2k\\ (k+1)(2k+1) & \textrm{ when }n-p=2k+1\end{cases}.\]
This proves the claim, as the expression in the second case does not depend on $\ell$, meaning all $[D_{p-2\ell}]$ appear in $\cohH^{c_p}_{Z_p}(\shO_X)$, and the first case is minimized for $\ell = 0$.
\end{proof}

Now we can compute ${\rm lcdef}_{\rm gen}(Z_p)$. To do this, in cases (1)-(3), we look for the highest index such that $[D_{p-1}]$ appears with non-zero coefficient. In case (4), we look for the highest index with $[D_{p-2}]$ having non-zero coefficient. To get the defect, we subtract the codimension.

\begin{proposition}\label{lcdefgendet} In the notation above, we have the following formulas for ${\rm lcdef}_{\rm gen}(Z_p)$.
\begin{enumerate} \item (Generic) ${\rm lcdef}_{\rm gen}(Z_p) = m+n-2p-2$.

\item (Odd skew) ${\rm lcdef}_{\rm gen}(Z_p) = 4(m-p-1)+2$.

\item (Even skew) ${\rm lcdef}_{\rm gen}(Z_p) = 4(m-p-1)$.

\item (Symmetric) ${\rm lcdef}_{\rm gen}(Z_p) = 2(n-p-1)$ (we assume $p\geq 2$).
\end{enumerate}
\end{proposition}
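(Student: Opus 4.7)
The plan is to combine \lemmaref{lcdefgen} with the explicit Grothendieck-group formulas in \theoremref{thm-rw}, then simply extract, in each of the four cases, the highest cohomological degree at which the composition factor of maximal support dimension appears. The key preliminary observation is that since the supports $Z_s$ of the simples $D_s$ form a chain, we have
\[ \dim \mathrm{Supp}\,\cohH^{c_p+i}_{Z_p}(\shO_X) = \max\{\dim Z_s \mid [D_s] \text{ appears in } \cohH^{c_p+i}_{Z_p}(\shO_X)\},\]
and by \propositionref{hc=ic} the factor $[D_p]$ only appears (and exactly as ${\rm IC}_{Z_p}$) in cohomological degree $c_p$, so it contributes only to the quotient part of $d(0)$ in \lemmaref{lcdefgen} and is thus irrelevant. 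Consequently $\max_i d(i)$ is achieved as $\dim Z_{s_\star}$ where $s_\star < p$ is the largest index for which some $[D_{s_\star}]$ appears in $H_p(q)$, and ${\rm lcdef}_{\rm gen}(Z_p) + c_p$ equals the largest $q$-exponent at which this $D_{s_\star}$ appears.

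Inspection of \theoremref{thm-rw} shows $s_\star = p-1$ in cases (1), (2), (3) (the $s=p-1$ summand is always present with nonzero $q$-binomial factor, since $\binom{n-p}{1}_{q^2}$ and $\binom{m-p}{1}_{q^4}$ are nonzero), while in case (4), only factors $[D_{p-2\ell}]$ occur, so $s_\star = p-2$ (which is $\geq 0$ by our assumption $p\geq 2$). In particular, once the largest $q$-degree carrying $[D_{s_\star}]$ is found, one subtracts $c_p$ (from \corollaryref{cor-dim}) to obtain ${\rm lcdef}_{\rm gen}(Z_p)$.

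The extractions are then one-line $q$-binomial computations. In case (1), the $s=p-1$ summand is $q^{(n-p)^2+(n-p+1)(m-n)}(1+q^2+\cdots+q^{2(n-p-1)})$, whose top degree minus $c_p=(m-p)(n-p)$ equals $(m-n)+2(n-p-1)=m+n-2p-2$. In cases (2) and (3), the $s=p-1$ summand involves $\binom{m-p}{1}_{q^4}=1+q^4+\cdots+q^{4(m-p-1)}$, and subtracting $c_p$ from the top degree yields $4(m-p-1)+2$ and $4(m-p-1)$ respectively. In case (4), for $\ell=1$ the relevant prefactor degree is $1+\binom{n-p+3}{2}-\binom{4}{2}$, and by \lemmaref{lem-qbinomial} the top degree of $\binom{\lfloor(n-p+1)/2\rfloor}{1}_{q^{-4}}$ is the constant term; a short arithmetic simplification then gives $2(n-p-1)$ after subtracting $c_p = \binom{n-p+1}{2}$.

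The main point requiring care, and really the only potential obstacle, is verifying in case (4) that no larger-support composition factor sneaks in at a higher cohomological degree -- this is precisely where \propositionref{hc=ic} (showing that $D_{p-1}$ never occurs in the symmetric case) is essential. One should also double-check that no $[D_s]$ with $s<s_\star$ appears at an even higher degree that would override the choice of $s_\star$, but since $\dim Z_s$ is strictly increasing in $s$, larger $i$ with smaller $s$ does not increase $d(i)$; so the procedure is clean. Putting the four one-line computations together yields the proposition.
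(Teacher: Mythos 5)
Your proof is correct and follows essentially the same route as the paper: use \lemmaref{lcdefgen} together with the decompositions of \theoremref{thm-rw} to locate the highest cohomological degree at which $D_{p-1}$ (cases (1)--(3)) or $D_{p-2}$ (case (4)) appears, then subtract $c_p$. You are somewhat more explicit than the paper's proof about why it suffices to track $D_{s_\star}$ with $s_\star$ maximal below $p$ --- invoking the strict monotonicity of $\dim Z_s$ and \propositionref{hc=ic} to dispose of the $D_p$ contribution at degree $c_p$ --- but this is exactly the reasoning the paper compresses into the sentence preceding the proposition, so there is no substantive difference.
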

\begin{proof} In the first three cases, we take $s = p-1$ and use that
\[ (\text{Generic}): \binom{n-s-1}{1}_{q^2} = \frac{(1-q^{2(n-p)})}{(1-q^2)} = 1+q^2 + \dots + q^{2(n-p-1)}.\]
\[(\text{Skew}):\binom{m-s-1}{1}_{q^4} = \frac{(1-q^{4(m-p)})}{(1-q^4)} =1+ q^4 + \dots + q^{4(m-p-1)}.\]

In the symmetric case, we take $\ell = 1$ and use that the maximal degree term in the $q$-binomial coefficient (evaluated at $q^{-4}$) is the constant term, so the $q$-binomial coefficient can be ignored in this case.

In summary, the highest degree with $[D_{p-1}]$ (in cases (1)-(3)) or $[D_{p-2}]$ (in case (4)) is
\begin{itemize} \item (Generic) $(n-p)^2 + (n-p+1)(m-n) + 2(n-p-1)$,

\item (Odd skew) $2(m-p)^2 +(m-p)+2 + 4(m-p-1)$,

\item (Even skew) $2(m-p)^2 -(m-p) + 4(m-p-1)$,

\item (Symmetric) $1+\binom{n-p+3}{2} -\binom{4}{2} = \binom{n-p+3}{2} - 5$.
\end{itemize}

The claim then follows by subtracting ${\rm codim}_{X}(Z_p)$ from each of these expressions.
\end{proof}

In \cite{RW}, a formula for the local cohomological dimension 
\[{\rm lcd}(X,Z_p) = \max\{j \mid \cohH^j_{Z_p}(\shO_X) \neq 0\}\]
is given. Using this and the computation at the end of the previous proof, we can compute the difference ${\rm lcdef}(Z_p) - {\rm lcdef}_{\rm gen}(Z_p)$.

\begin{proposition} \label{prop-lcdefDifference} In the notation above, we have the following formulas. 

\begin{enumerate}\item (Generic) ${\rm lcdef}(Z_p) - {\rm lcdef}_{\rm gen}(Z_p) =(p-1)(m+n-2p-2)$.
\item (Odd skew) ${\rm lcdef}(Z_p) - {\rm lcdef}_{\rm gen}(Z_p) = 2(p-1)(2(m-p-1) +1)$.

\item (Even skew) ${\rm lcdef}(Z_p) - {\rm lcdef}_{\rm gen}(Z_p) = 4(p-1)(m-p-1)$.

\item (Symmetric) ${\rm lcdef}(Z_p) - {\rm lcdef}_{\rm gen}(Z_p) = \begin{cases} (n-p-1)(p-2) & p  \text{ even } \\ (n-p-1)(p-3)& p \text{ odd and }\, p\geq 3\end{cases}$.
\end{enumerate}
\end{proposition}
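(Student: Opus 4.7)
The plan is to exploit Theorem \ref{thm-rw}, which expresses $H_p(q) = \sum_j [\cohH^j_{Z_p}(\shO_X)]\cdot q^j$ as a $\ZZ_{\geq 0}$-linear combination of the simple modules $[D_s]$. Since the $[D_s]$ are distinct simple $\Dmod_X$-modules, the local cohomological dimension ${\rm lcd}(X,Z_p)$ equals the largest degree of $q$ appearing with a nonzero coefficient in $H_p(q)$, and ${\rm lcdef}(Z_p) = {\rm lcd}(X,Z_p) - {\rm codim}_X(Z_p)$. Combined with the computation of ${\rm lcdef}_{\rm gen}(Z_p)$ in \propositionref{lcdefgendet}, which identified the top degree at which the \emph{second}-highest intersection complex appears, the difference ${\rm lcdef}(Z_p) - {\rm lcdef}_{\rm gen}(Z_p)$ reduces to comparing the two relevant top degrees, and in particular the codimension terms cancel.

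For cases (1)--(3), for each $s\in[0,p]\cap\ZZ$, let $f(s)$ denote the top degree of the $[D_s]$-summand in $H_p(q)$. By the standard formula that $\binom{a}{b}_{q^c}$ has top degree $cb(a-b)$, one gets $f(s) = (n-p)^2 + (n-s)(m-n) + 2(p-s)(n-p-1)$ in the generic case, $f(s) = 2(m-p)^2 + (m-p) + 2(p-s) + 4(p-s)(m-p-1)$ in the odd skew case, and $f(s) = 2(m-p)^2 - (m-p) + 4(p-s)(m-p-1)$ in the even skew case. In each instance, the backward difference $f(s) - f(s+1)$ is constant in $s$ and equals respectively $m+n-2p-2$, $4(m-p)-2$, and $4(m-p-1)$, which are all non-negative under our hypotheses. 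Thus $f$ is decreasing and the maximum occurs at $s = 0$. Since \propositionref{lcdefgendet} already shows that ${\rm lcdef}_{\rm gen}(Z_p)$ is realised at $s = p-1$, we obtain
\[ {\rm lcdef}(Z_p) - {\rm lcdef}_{\rm gen}(Z_p) = f(0) - f(p-1) = (p-1)\cdot\bigl(f(s) - f(s+1)\bigr), \]
which gives the claimed expressions (with the mild algebraic simplification $2(p-1)(2m-2p-1) = 2(p-1)(2(m-p-1)+1)$ in case (2)).

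For case (4), the $q$-binomial appears as $\binom{\lfloor (n-p+2\ell-1)/2\rfloor}{\ell}_{q^{-4}}$, which by \lemmaref{lem-qbinomial} is a polynomial in $q^{-1}$ whose top degree in $q$ is $0$ (the constant term). Hence the top degree of the $[D_{p-2\ell}]$-summand is simply $k(\ell) = 1 + \binom{n-p+2\ell+1}{2} - \binom{2\ell+2}{2}$. Using $\binom{a+2}{2} - \binom{a}{2} = 2a+1$, the forward difference $k(\ell) - k(\ell-1) = 2(n-p)-2$ is non-negative, so $k$ is maximised at $\ell = \lfloor p/2\rfloor$, giving ${\rm lcd}(X,Z_p) = k(\lfloor p/2\rfloor)$; meanwhile \propositionref{lcdefgendet} shows ${\rm lcdef}_{\rm gen}(Z_p)$ is realised at $\ell = 1$. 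Therefore
\[ {\rm lcdef}(Z_p) - {\rm lcdef}_{\rm gen}(Z_p) = k(\lfloor p/2\rfloor) - k(1) = 2(\lfloor p/2\rfloor - 1)(n-p-1), \]
which evaluates to $(p-2)(n-p-1)$ for $p$ even and $(p-3)(n-p-1)$ for $p$ odd (with $p\geq 3$ in the odd case).

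The only potential obstacle is ensuring that the relevant summands are genuinely nonzero, i.e., that the top-degree terms of the $q$-binomials do not accidentally vanish and that the multiplicity conditions $\binom{n-s-1}{p-s}_{q^2}$ etc. make sense in our range; this follows immediately from the assumptions $1 \leq p < n$ (resp.\ $p < m$ for the skew cases, $p\geq 2$ for the symmetric case). No deep input is required beyond Theorem~\ref{thm-rw}, Lemma~\ref{lem-qbinomial}, and the observation that the $[D_s]$ are linearly independent in $\Gamma(X)$.
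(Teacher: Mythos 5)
Your proof is correct and amounts to the same argument as the paper's, with one step made explicit: the paper simply quotes the closed-form expressions for ${\rm lcd}(X,Z_p)$ from \cite{RW} and subtracts, whereas you re-derive those expressions from Theorem~\ref{thm-rw} by locating the top-degree term of $H_p(q)$. Your version has the minor expository advantage that it shows the codimension terms cancel automatically in ${\rm lcdef}(Z_p) - {\rm lcdef}_{\rm gen}(Z_p)$, reducing everything to the linear differences $f(0)-f(p-1)$ (resp.\ $k(\lfloor p/2\rfloor)-k(1)$), but it is not a genuinely different route.

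Two small points worth registering. First, for the monotonicity step you should note explicitly that the backward differences $m+n-2p-2$, $4(m-p)-2$, $4(m-p-1)$, and $2(n-p-1)$ are $\geq 0$ under the standing hypotheses ($1\leq p < n$, $m\geq n$, resp.\ $1\leq p<m$, resp.\ $2\leq p < n$) — you assert this, and it is true, but in the boundary cases (e.g.\ $m=n$, $p=n-1$ in the generic case, or $p=m-1$ in the even skew case) the difference is $0$ and $f$ is constant, so the maximum is still attained at $s=0$; the conclusion is unaffected but the phrase ``$f$ is decreasing'' should be ``non-increasing.'' Second, your appeal to Lemma~\ref{lem-qbinomial} for the symmetric case is slightly stronger than needed: what you use is only that $\binom{a}{b}_{q^{-4}}$ has top degree $0$ in $q$ (constant term $1$), which is immediate from $\binom{a}{b}_t$ being a polynomial in $t$ with constant term $1$; Lemma~\ref{lem-qbinomial} gives the bottom degree, which was what \propositionref{lcdefgendet} needed. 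Neither of these affects correctness.
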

\begin{proof} This follows immediately from the computations of ${\rm lcd}(X,Z_p)$ in \cite{RW}, which is as follows:
\begin{itemize} \item (Generic) ${\rm lcd}(X,Z_p) = mn - (p+1)^2+1$,
\item (Odd skew) ${\rm lcd}(X,Z_p) = \binom{2m+1}{2} - \binom{2p+2}{2} +1$

\item (Even skew) ${\rm lcd}(X,Z_p) = \binom{2m}{2}-\binom{2p+2}{2} +1$

\item (Symmetric) ${\rm lcd}(X,Z_p) = \begin{cases} 1 + \binom{n+1}{2} - \binom{p+2}{2} & p \text{ even} \\1 + \binom{n}{2} - \binom{p+1}{2} & p \text{ odd}\end{cases}$.
\end{itemize}
The assertion follows by combining the above with \propositionref{lcdefgendet}.
\end{proof}

Using this, we can characterize which $Z_p$ are rational homology manifolds. Indeed, by \theoremref{thm-characterize}\eqref{thmmain} this is equivalent to ${\rm lcdef}(Z_p) = 0$ and ${\rm IC}_{Z_p} = \cohH^{c_p}_{Z_p}(\shO_X)$. The inequality ${\rm lcdef}_{\rm gen}(Z_p) \leq {\rm lcdef}(Z_p)$ shows that if ${\rm lcdef}(Z_p) =0$, then the difference ${\rm lcdef}(Z_p) - {\rm lcdef}_{\rm gen}(Z_p)$ is also 0, and so we can use the previous proposition to simplify our computations.

Recall that in the next proposition we assume $p\geq 1$ in cases (1)-(3) and we assume $p\geq 2$ in case (4).

\begin{proposition} \label{prop-nonRHMDeterminantal} In all cases (1)-(4), the variety $Z_p$ is not a rational homology manifold.
\end{proposition}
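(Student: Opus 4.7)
The plan is to combine Theorem~\ref{thmmain} (in its limiting form, with $k \to +\infty$) with the computations already assembled in Propositions~\ref{hc=ic} and \ref{lcdefgendet}. Passing to the limit in Theorem~\ref{thmmain}, the variety $Z_p$ is a rational homology manifold if and only if both $\mathrm{lcdef}(Z_p) = 0$ (so that $\cohH^j_{Z_p}(\shO_X) = 0$ for all $j > c_p$) and $\mathrm{IC}_{Z_p} = \cohH^{c_p}_{Z_p}(\shO_X)$. Using the trivial inequality $\mathrm{lcdef}_{\mathrm{gen}}(Z_p) \leq \mathrm{lcdef}(Z_p)$ from \eqref{obvneq}, it therefore suffices to verify that, in each of the four cases, either $\mathrm{lcdef}_{\mathrm{gen}}(Z_p) > 0$ or the identification $\mathrm{IC}_{Z_p} = \cohH^{c_p}_{Z_p}(\shO_X)$ fails.

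First I would read off strict positivity of $\mathrm{lcdef}_{\mathrm{gen}}(Z_p)$ directly from the formulas in Proposition~\ref{lcdefgendet} in all but a small list of boundary subcases. In case (2), we have $\mathrm{lcdef}_{\mathrm{gen}}(Z_p) = 4(m-p-1) + 2 \geq 2$ for every $1 \leq p < m$, so there is nothing left to check. Under the standing hypotheses $1 \leq p < n$, $1 \leq p < m$, and $2 \leq p < n$, the formulas $m+n-2p-2$, $4(m-p-1)$, and $2(n-p-1)$ are nonnegative, and vanish exactly in the boundary subcases: case (1) with $m = n$ and $p = n-1$; case (3) with $p = m-1$; and case (4) with $p = n-1$.

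Next I would dispose of those boundary subcases by invoking Proposition~\ref{hc=ic}. In case (1) with $m = n$, the generic statement of Proposition~\ref{hc=ic} already shows that $\mathrm{IC}_{Z_p} \neq \cohH^{c_p}_{Z_p}(\shO_X)$ for every $p$, so in particular when $p = n-1$. In case (3), the even skew statement of Proposition~\ref{hc=ic} says that the IC identification always fails. Finally, in case (4) with $p = n-1$, the difference $n-p = 1$ is odd, so $n \not\equiv p \pmod 2$, and again Proposition~\ref{hc=ic} forces $\mathrm{IC}_{Z_p} \neq \cohH^{c_p}_{Z_p}(\shO_X)$.

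There is no substantive technical obstacle here; the argument is purely a bookkeeping combination of the two preceding propositions. The only mild subtlety is recognizing that in each of the three boundary subcases where the generic lower bound from ${\rm lcdef}_{\mathrm{gen}}$ degenerates to zero, the complementary criterion ($\mathrm{IC}_{Z_p} \neq \cohH^{c_p}_{Z_p}(\shO_X)$) supplied by Proposition~\ref{hc=ic} precisely covers the gap, thanks to the parity and strict inequality conditions imposed there.
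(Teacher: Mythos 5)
Your proof is correct and takes essentially the same route as the paper: reduce via Theorem~\ref{thmmain} to the dichotomy $\mathrm{lcdef}(Z_p)>0$ or $\mathrm{IC}_{Z_p}\neq\cohH^{c_p}_{Z_p}(\shO_X)$, then read both conditions off Propositions~\ref{lcdefgendet} and \ref{hc=ic}. The one minor difference is that you work purely with $\mathrm{lcdef}_{\mathrm{gen}}$ and the inequality \eqref{obvneq}, whereas the paper also invokes the difference formula of Proposition~\ref{prop-lcdefDifference} (e.g.\ in case~(1)); your version is slightly leaner since $\mathrm{lcdef}_{\mathrm{gen}}\le\mathrm{lcdef}$ already suffices to identify the boundary subcases.
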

\begin{proof} For case (1), the difference ${\rm lcdef}(Z_p) - {\rm lcdef}_{\rm gen}(Z_p)$ and ${\rm lcdef}_{\rm gen}(Z_p)$ vanish if and only if $m+n = 2p+2$. As $p\leq n-1$, this gives $2p+2 \leq 2(n-1) +2 = 2n$, so this equality is only possible if $m = n$ and $p = n-1$. But for $m=n$, we have observed that $\cohH^{c_p}_{Z_p}(\shO_X) \neq {\rm IC}_{Z_p}$.

For case (2), we have that ${\rm lcdef}_{\rm gen}(Z_p) \geq 2 > 0$, so $Z_p$ can never be a rational homology manifold.

In case (3), ${\rm IC}_{Z_p} \neq \cohH^{c_p}_{Z_p}(\shO_X)$, so $Z_p$ cannot be a rational homology manifold.

For case (4), ${\rm lcdef}_{\rm gen}(Z_p) = 0$ if and only if $p = n-1$. But then ${\rm IC}_{Z_p} \neq \cohH^{c_p}_{Z_p}(\shO_X)$ as in this case, $p\not \equiv n \mod 2$.
\end{proof}

We conclude this subsection with an application of \theoremref{thm-ppbound} to give bounds on $\HRH(Z_p)$. To do this, we must compute ${\rm codim}_{Z_p}(Z_{p,{\rm nRS}})$. Note that $Z_{p,{\rm nRS}}$ is equal to $Z_{p-1}$ in cases (1)-(3) and $Z_{p-2}$ in case (4).

\begin{lemma}\label{lem-codnrs} In the notation above, we have the following formula for ${\rm codim}_{Z_p}(Z_{p,{\rm nRS}})$.

\begin{enumerate} \item (Generic) ${\rm codim}_{Z_p}(Z_{p,{\rm nRS}}) = m+n-2p+1$.

\item (Odd skew) ${\rm codim}_{Z_p}(Z_{p,{\rm nRS}}) = 4(m-p)+3$.

\item (Even skew) ${\rm codim}_{Z_p}(Z_{p,{\rm nRS}}) = 4(m-p)+1$.

\item (Symmetric) ${\rm codim}_{Z_p}(Z_{p,{\rm nRS}}) = 2(n-p)+3$ (when $p\geq 2$).
\end{enumerate}
\end{lemma}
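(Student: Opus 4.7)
The proof is essentially a bookkeeping exercise, so I will outline the strategy and the two inputs it needs.

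The plan is to combine two ingredients. The first is the identification of the non-rationally-smooth locus: in cases (1)--(3) it is $Z_{p-1}$, while in case (4) it is $Z_{p-2}$. The second is the dimension formulas from \corollaryref{cor-dim}, which then reduce the computation to straightforward arithmetic.

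The identification of $Z_{p,\mathrm{nRS}}$ will be the conceptual input. I will use the standard local description of determinantal varieties: the stratification by rank is a Whitney stratification, and at a smooth point of the open stratum $Z_s \setminus Z_{s-1}$, the variety $Z_p$ is analytically locally isomorphic (up to a smooth factor) to the same type of determinantal variety $Z_{p-s}$ of smaller size at its cone point. By \lemmaref{lem-kPRSSmoothPullback} (together with the \'etale-local version of \propositionref{normalslicelcdef}), whether $Z_p$ is a rational homology manifold at such a point is controlled by whether $Z_{p-s}$ (of the smaller size) is one at its cone point. In cases (1)--(3), \propositionref{prop-nonRHMDeterminantal} shows that $Z_1$ is never a rational homology manifold in these families, so $Z_{p-1} \subseteq Z_{p,\mathrm{nRS}}$; and since $Z_p$ is smooth on $Z_p \setminus Z_{p-1}$, we get equality. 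In case (4), the rank-one symmetric variety $Z_1^{\mathrm{sym}}$ \emph{is} a rational homology manifold (as observed above \propositionref{hc=ic}), so $Z_p$ is rationally smooth along $Z_{p-1}\setminus Z_{p-2}$, whereas at a point of $Z_{p-2}\setminus Z_{p-3}$ the local model is (a smooth factor times) $Z_2^{\mathrm{sym}}$, which is not rationally smooth by \propositionref{prop-nonRHMDeterminantal}. Hence $Z_{p,\mathrm{nRS}} = Z_{p-2}$.

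With the identification in hand, I compute the codimensions by subtracting dimensions in \corollaryref{cor-dim}. In case (1), $\dim(Z_p)-\dim(Z_{p-1}) = p(m+n-p) - (p-1)(m+n-p+1) = (m+n-p)-(p-1) = m+n-2p+1$. In case (2) with $n=2m+1$, $\dim(Z_p)-\dim(Z_{p-1}) = p(4m-2p+1)-(p-1)(4m-2p+3) = 4(m-p)+3$. In case (3) with $n=2m$, $\dim(Z_p)-\dim(Z_{p-1}) = p(4m-2p-1)-(p-1)(4m-2p+1) = 4(m-p)+1$. In case (4), $\dim(Z_p)-\dim(Z_{p-2}) = \tfrac{1}{2}\bigl[p(2n-p+1)-(p-2)(2n-p+3)\bigr] = 2(n-p)+3$.

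The main (and only) obstacle is the identification $Z_{p,\mathrm{nRS}} = Z_{p-1}$ or $Z_{p-2}$; the arithmetic is then automatic. That identification rests on the local product structure along the rank stratification, which is classical (see Bruns--Vetter / Weyman), together with \lemmaref{lem-kPRSSmoothPullback} to transport the rational-smoothness question to the cone point of a smaller determinantal variety of the same type, where \propositionref{prop-nonRHMDeterminantal} provides the verdict.
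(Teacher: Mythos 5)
Your proof is correct and follows essentially the same approach as the paper: once $Z_{p,\mathrm{nRS}}$ is identified as $Z_{p-1}$ (cases 1--3) or $Z_{p-2}$ (case 4), the codimension formulas follow by subtracting the dimensions from Corollary~\ref{cor-dim}, and your arithmetic checks out in all four cases. The one genuine addition in your write-up is that you actually \emph{justify} the identification of $Z_{p,\mathrm{nRS}}$ (via the local product structure of the rank stratification, Lemma~\ref{lem-kPRSSmoothPullback}, and Proposition~\ref{prop-nonRHMDeterminantal} applied to the smaller-size slice, together with the observation that $Z_1^{\mathrm{sym}}$ is a rational homology manifold), whereas the paper asserts this identification in the sentence immediately preceding the lemma without spelling out the argument; your justification is the right one and fills that gap.
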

\begin{proof} This is immediate by computing the differences
${\rm codim}_X(Z_{p-1}) - {\rm codim}_X(Z_p)$ in cases (1)-(3), and
${\rm codim}_{X}(Z_{p-2}) - {\rm codim}_{X}(Z_p)$ in case (4) using \corollaryref{cor-dim}.
\end{proof}

\begin{proposition} \label{prop-ComputeHRHDet} In the notation above, we have
\begin{enumerate} \item (Generic) $\HRH(Z_p) = 0$.

\item (Odd skew) $\HRH(Z_p) \in \{0,1\}$.

\item (Even skew) $\HRH(Z_p) \in \{0,1\}$.

\item (Symmetric) $\HRH(Z_p) \in \{0,1\}$ (when $p\geq 2$).
    
\end{enumerate}
    
\end{proposition}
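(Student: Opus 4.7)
The plan is to obtain upper bounds on $\HRH(Z_p)$ by applying the main inequality of \theoremref{thm-ppbound} ($=$ \propositionref{prop-ppbound}) and to obtain the lower bound $\HRH(Z_p) \geq 0$ from the fact, due to Boutot, that $Z_p$ has rational singularities (so we can invoke \remarkref{rmk-PPProperties}(3)). All of the numerical data needed has already been assembled in the previous results of this subsection, so the argument reduces to arithmetic.

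First, I would record that by \propositionref{prop-nonRHMDeterminantal}, in all four cases (with the convention $p\geq 1$ in cases (1)--(3) and $p\geq 2$ in case (4)) the variety $Z_p$ is not a rational homology manifold; thus $\HRH(Z_p) < +\infty$, and $Z_{p,{\rm nRS}}$ is a proper closed subvariety of $Z_p$ with well-defined positive codimension. Consequently \propositionref{prop-ppbound} yields the upper bound
\[
\HRH(Z_p) \;\leq\; \frac{{\rm codim}_{Z_p}(Z_{p,{\rm nRS}}) - {\rm lcdef}_{\rm gen}(Z_p) - 3}{2}.
\]

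Second, I would plug in the explicit values from \propositionref{lcdefgendet} and \lemmaref{lem-codnrs}. Case (1) (Generic): the right-hand side equals $\tfrac{(m+n-2p+1)-(m+n-2p-2)-3}{2}=0$, so $\HRH(Z_p)\leq 0$. Case (2) (Odd skew): it equals $\tfrac{(4(m-p)+3)-(4(m-p-1)+2)-3}{2}=1$. Case (3) (Even skew): it equals $\tfrac{(4(m-p)+1)-4(m-p-1)-3}{2}=1$. Case (4) (Symmetric, $p\geq 2$): it equals $\tfrac{(2(n-p)+3)-2(n-p-1)-3}{2}=1$. Combined with $\HRH(Z_p)\geq 0$ from rational singularities, we get $\HRH(Z_p) = 0$ in case (1) and $\HRH(Z_p)\in\{0,1\}$ in cases (2)--(4).

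There is no real obstacle in this plan; the nontrivial input (computation of ${\rm lcdef}_{\rm gen}(Z_p)$ via \theoremref{thm-rw} and of ${\rm codim}_{Z_p}(Z_{p,{\rm nRS}})$ via \corollaryref{cor-dim}) has already been carried out, and the key structural fact that $Z_p$ has rational singularities is classical. If one wanted to tighten the statements of (2)--(4) and decide between $\HRH=0$ and $\HRH=1$, one would need finer information on the Hodge filtration of $\cohH^{c_p}_{Z_p}(\shO_X)$ along the lines of \theoremref{thmmain}, for instance via the equivariant weight-filtration computations in \cite{Per}; this would be the main remaining subtlety, but it is not needed for the stated proposition.
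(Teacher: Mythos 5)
Your proposal is correct and follows exactly the same route as the paper: lower bound $\HRH(Z_p)\geq 0$ from Boutot's theorem that $Z_p$ has rational singularities, finiteness from \propositionref{prop-nonRHMDeterminantal}, and the upper bound from \propositionref{prop-ppbound} combined with the numerical computations in \propositionref{lcdefgendet} and \lemmaref{lem-codnrs}. The arithmetic in all four cases checks out.
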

\begin{proof} 

As $Z_p$ has rational singularities, we always have $\HRH(Z_p) \geq 0$, and we know $Z_p$ is not a rational homology manifold for all $p$, so $\HRH(Z_p) < \infty$.

By \theoremref{thm-ppbound}, we have inequality
\[ {\rm lcdef}_{\rm gen}(Z_p) \leq {\rm codim}_{Z_p}(Z_{p,{\rm nRS}}) - 2\HRH(Z_p) -3.\]
The assertion follows from \propositionref{lcdefgendet} and \lemmaref{lem-codnrs}.
\end{proof}

\begin{remark}
In the generic determinantal case, $\HRH(Z_p)=0$ also follows directly using \theoremref{thm-characterize}\eqref{thmmain} and \cite{Per}*{Cor. 1.4}. For this reason, we believe it will be interesting to carry out a study of the Hodge structures on the local cohomology modules analogous to \cite{Per} in the cases (2)-(4).
\end{remark}

\begin{remark} Some remarks are in order:
\begin{enumerate}
\item Note that equality holds in \theoremref{thm-ppbound} for generic determinantal varieties.
\item We observed earlier that the equality ${\rm lcdef}_{\textrm{gen}}(Z)= {\rm lcdef}(Z)$ holds when $Z_{\textrm{nRS}}$ is isolated. However, we see above that equality can also hold when $Z_{\textrm{nRS}}$ is non-isolated. Indeed, take for example $m=3, p=2$ in case (3) and apply \propositionref{prop-lcdefDifference}.
\item We also note that \propositionref{prop-lcdefDifference} gives many examples when ${\rm lcdef}_{\textrm{gen}}(Z)< {\rm lcdef}(Z)$. The smallest dimension of such $Z$ that we have in these classes of examples is 10 (generic determinantal with $m=4,n=3,p=2$).
\end{enumerate}
    
\end{remark}



\bibliography{bib}

\end{document}